\theoremstyle{plain}
\newtheorem{theorem}{Theorem}[section]
\newtheorem{proposition}[theorem]{Proposition}
\newtheorem{remarks}[theorem]{Remarks}
\theoremstyle{definition}
\newtheorem{definition}[theorem]{Definition}
\newtheorem{remark}[theorem]{Remark}
\newtheorem{note}[theorem]{Note}
\title[From annular to toroidal knotoids]
 {From annular to toroidal knotoids \\ and their universal bracket polynomials}
\author{Ioannis Diamantis}
\address{Department of Data Analytics and Digitalisation,
Maastricht University, School of Business and Economics,
P.O.Box 616, 6200 MD, Maastricht,
The Netherlands.}
\email{i.diamantis@maastrichtuniversity.nl}
\author{Sofia Lambropoulou}
\address{School of Applied Mathematical and Physical Sciences, National Technical University of Athens, Zografou campus, GR-15780 Athens, Greece.}
\email{sofia@math.ntua.gr}
\urladdr{http://www.math.ntua.gr/~sofia}
\author{Sonia Mahmoudi}
\address{Advanced Institute for Materials Research, Tohoku University, 2-1-1 Katahira, Aoba-ku, Sendai 980-8577, Japan; RIKEN iTHEMS, 2-1 Hirosawa, Wako, Saitama 351-0198, Japan}
\email{sonia.mahmoudi@tohoku.ac.jp}
\subjclass[2020]{57K10, 57K12, 57K14, 57M50} 
\keywords{spatial multi-knotoids, thickened annulus, annular knotoids,  annular equivalence, thickened torus, toroidal knotoids, toroidal equivalence, mixed knotoids, lifts of knotoids, annular bracket polynomials, annular Jones polynomials, toroidal bracket polynomials, toroidal Jones polynomials}
\thanks{We acknowledge with gratitude the support and hospitality of the Advanced Institute for Materials Research (WPI-AIMR) and the Tohoku Forum for Creativity at Tohoku University, as well as the International Institute for Sustainability with Knotted Chiral Meta Matter (WPI-SKCM$^2$).The third author was also supported by JSPS KAKENHI Grant-in-Aid for Early-Career Scientists, Grant Number 25K17246.}
\begin{document}

\setcounter{section}{-1}

\begin{abstract} 
In this paper we study the theory of multi-knotoids in the annulus and in the torus, building  up from the theory of planar knotoids to the theory of toroidal knotoids through the theory of annular knotoids. We introduce the concept of lifted annular and toroidal knotoids and examine inclusion relations arising naturally from the topology of the supporting manifolds. We also introduce the concept of mixed knotoids as special cases of planar knotoids, containing a fixed unknot for representing the thickened annulus or a fixed Hopf link for representing the thickened torus. We then extend the Turaev loop bracket for planar knotoids to bracket polynomials for annular and for toroidal  knotoids, whose universal analogues recover the Kauffman bracket knotoid skein modules of the thickened annulus and the thickened torus.
\end{abstract}

\maketitle

%%%%%%%%%%%%%%%%%%%% 

\section{Introduction}\label{sec:0}

Knot theory is a continuously expanding core area of Topology, with significant implications across various fields. The ultimate goal of classical knot theory  is the classification of knots and links up to ambient isotopies, which is tackled with the construction of knot invariants. The area advanced significantly with the milestone discovery of the {\it Jones polynomial} in 1984 \cite{Jo}, soon reconstructed via the {\it Kauffman bracket polynomial} with the use of succinct diagrammatic techniques \cite{Kauffman1990}. 

In a further development Turaev introduced the theory of knotoids \cite{T}, a diagrammatic theory of open-ended  knotted  curves in surfaces, with the main motivation of reducing computational complexity for some knot invariants. A {\it knotoid} is an equivalence class of knottoid diagrams in an oriented surface $\Sigma$, under surface isotopies and the standard Reidemeister moves that take place away from the two endpoints (see Figure~\ref{mkd}(a)). The theory of {\it spherical knotoids} (for the case $\Sigma = S^2$) extends  the theory of classical knots and, in turn, the theory of {\it planar knotoids} (for the case $\Sigma = \mathbb{R}^2$ or $D^2$) generalizes the theory of spherical knotoids.  Similar to the notion of classical links, one may extend the notion of knotoid to that of a  multi-knotoid, which is a union of a knotoid with a finite numbers of knots (see Figure~\ref{mkd}(b)). Furthermore,  In \cite{T} Turaev related spherical knotoids to $\Theta$-curves in 3-space. In analogy, in \cite{GK} it is established that planar knotoids lift faithfully to (isotopy classes of) rail arcs in 3-space, called {\it spatial knotoids}, arcs with their ends attached on two parallel lines passing by the two endpoints. This last interpretation led to  modeling  proteins by planar knotoids. So, the theory of knotoids has become an important tool in their study and classification \cite{DGBS, GGLDSK, DS}. 

In this paper, we study the theory of multi-knotoids in the annulus and the torus. Figure~\ref{ex2} illustrates a planar, an annular and a toroidal multi-knotoid. We  introduce the notion of the \textit{lift} of an annular resp. a toroidal multi-knotoid into an open knotted `rail curve', linked with classical knots, with its ends attached to two parallel \textit{rail segments} piercing the thickened annulus (see Figure~\ref{an}) resp. the thickened torus (see Figure~\ref{ex3}). We further establish \textit{rail isotopy} for the lifts of   annular and  toroidal multi-knotoids  by representing them as specific types of mixed  planar multi-knotoids  and by exploiting various inclusion relations of the supporting manifolds  (Theorems~\ref{isopST} and~\ref{isopTT}).

The Kauffman bracket skein module of the plane (or the disc or the sphere) is one-dimensional, freely generated by the unknot \cite{Kauffman1990}.  In \cite{T} Turaev defines the bracket polynomial for spherical and planar knotoids applying the same rules as of the Kauffman bracket polynomial, except for the initial condition in which the unknot is replaced by the trivial knotoid. In \cite{GGLDSK} the \textit{loop bracket polynomial} for planar knotoids is defined as a two-variable extension of the \textit{Turaev bracket polynomial}, with the additional variable  assigned to nested loop components enclosing the trivial knotoid segment in a state diagram. 

In this paper, we also construct the \textit{universal planar, annular} and \textit{toroidal bracket polynomials} for multi-knotoids in the corresponding surfaces (Definitions~\ref{u-planarskein}, \ref{annularskein}, and~\ref{toroidalskein}), which generalize the loop bracket polynomial to an infinite variable Laurent polynomial in the case of planar knotoids. Our universal bracket polynomials capture the full complexity of the Kauffman bracket skein modules for multi-knotoids in these  surfaces, as well as for their three-dimensional lifts in 3-space, the thickened annulus and the thickened torus (Theorems~\ref{th:universal-bracket-regular-isotopy}, \ref{th:pkaufbst} and ~\ref{th:pkaufbtt}).  For computational purposes we also introduce finite variable specializations for annular and toroidal multi-knotoids (Definitions~\ref{pkaufbst},~\ref{pkaufbtt} and~\ref{def:reduced-tor-bracket}). For all bracket polynomials we also provide closed state summation formuli. By further normalizing the regular isotopy invariants under the Reidemeister move R1, we obtain the corresponding {\it annular} and {\it  toroidal Jones polynomials} as new invariants for classifying annular and toroidal multi-knotoids, see Theorems~\ref{th:jones-an} and ~\ref{th:jones-tor}. 

\begin{figure}[ht]
\begin{center}
\includegraphics[width=6in]{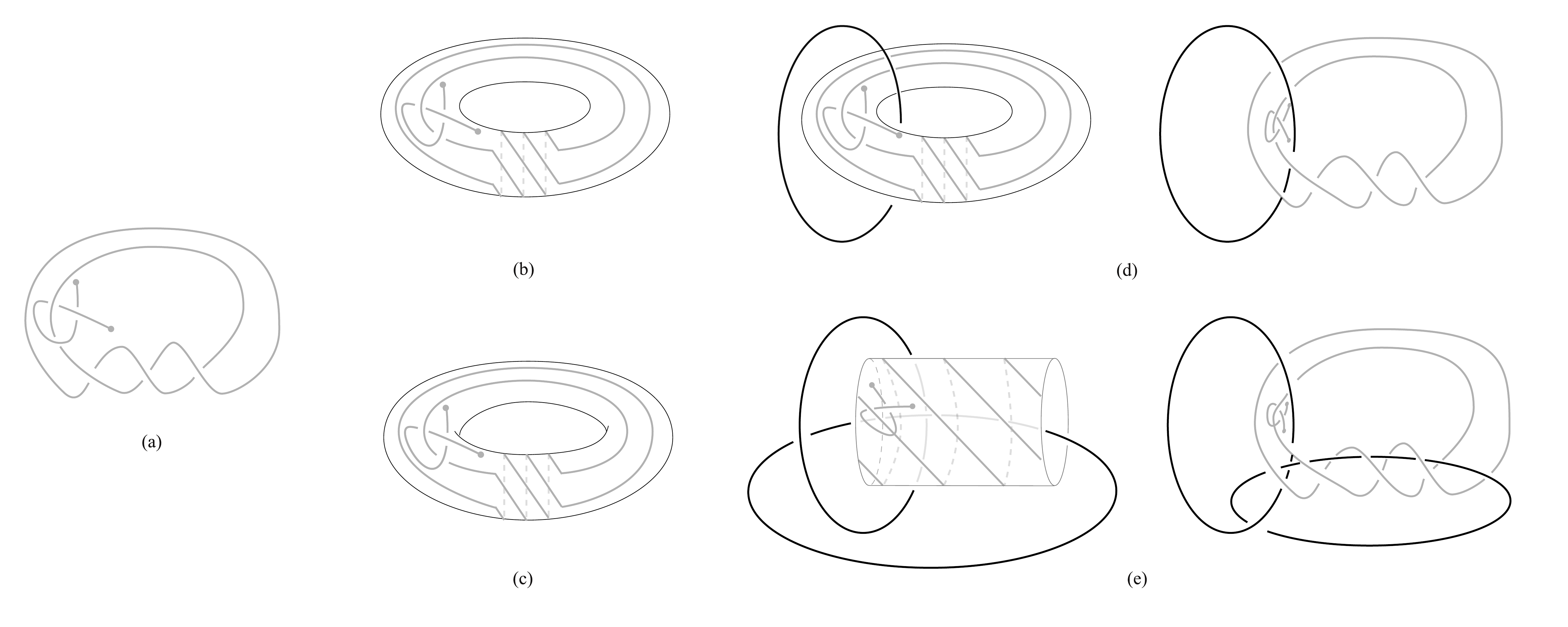} 
\end{center}
\caption{ A planar multi-knotoid (a),  the multi-knotoid in the torus (b),  a mixed link representing the multi-knotoid in the thickened annulus (c) and in the thickened torus (d).}
\label{ex2}
\end{figure}

More specifically, for comparing the theories of knotoids in the different surfaces under consideration, we use the {\it inclusion relations} of the supporting manifolds. The inclusion of a disc in the annulus resp. the torus enables us to view planar multi-knotoid diagrams as annular resp. toroidal ones,  leading to  {\it injections} of the theory of planar multi-knotoids into the theory of annular resp. toroidal multi-knotoids (Propositions~\ref{discannulus} and~\ref{disctorus}). Conversely, the inclusion of the  annulus in a disc induces a {\it surjection} of the theory of annular multi-knotoids onto planar multi-knotoids (Proposition~\ref{discannulus}). We use these inclusion relations to prove the isotopy generalized Reidemeister theorem for lifted annular multi-knotoids (Theorem~\ref{isopST}), leading to the analogous injection and surjection relations.

The notion of lift for toroidal multi-knotoids enables us to consider the inclusion of the thickened torus into an enclosing three-ball (Figure~\ref{planarannulartorus}), allowing any multi-knotoid in the thickened torus to be regarded as a spatial multi-knotoid, by extending the two rail segments to rails in the three-ball and then in three-space. However, and unlike the case of the thickened annulus, this construction  does {\it not} yield  {\it a well-defined map} from toroidal to planar multi-knotoids, since isotopic multi-knotoids in the thickened torus may correspond to non-isotopic spatial multi-knotoids. The obstruction arises because, the rail segments in the torus do not initially extend across the 3-ball, so isotopies occurring below the rails may induce forbidden moves upon projection (Remarks~\ref{torus_no_surjection}). 

We finally explore how the theory of annular multi-knotoids relates to that of toroidal multi-knotoids. The inclusion of the  (thickened) annulus in the (thickened) torus induces a {\it well-defined map} from annular multi-knotoids  to toroidal multi-knotoids (and respectively their lifts). Yet, this map is {\it not injective}, since in the torus we also have the longitudinal toroidal move (see Figure~\ref{toroidal-move-knotoid}) which  is not permitted for annular multi-knotoids (Proposition~\ref{prop:no-injective}). Finally, the inclusion of the thickened torus in the solid torus (Figure~\ref{annulartorus}), which is homeomorphic to a thickened annulus, allows us to view knotoids in the thickened torus as knotoids in the thickened annulus, where the meridional windings trivialize, by extending at the same time the two piercing rail segments in the width of the thickened annulus. However, similarly to the case of toroidal and planar multi-knotoids, this inclusion  does {\it not} induce {\it a well-defined map} of toroidal multi-knotoids onto annular multi-knotoids (Remark~\ref{rem:incl-TT-ST}).

Despite the simplicity of the bracket skein relation, computing the Kauffman bracket skein module of annular and toroidal multi-knotoids proves to be quite subtle. The difficulty lies in finding independent initial conditions for assigning different variables that will ensure the well-definedness of the bracket polynomials. Our key observation in analyzing the state diagrams in either setting is that: in the case of the annulus the trivial knotoid is {\it trapped}  between two sets of essential unknots, the inner and the outer ones, along with its nesting null-homotopic unknots  (see Figure~\ref{fig:annular-turaev} and Theorem~\ref{thm:annularskein}), while for classical link diagrams in the annulus it is only the numbers of the essential and null-homotopic unknots that count invariantly. In contrast, in the case of the torus the  trivial knotoid is {\it not trapped}: neither within a $(p,q)$-torus knot, for $p, q$ coprime, nor among the components of a $(p,q)$-torus link, while the nesting of null-homotopic unknots is still present (see Figure~\ref{fig:toroidal-turaev} and Theorem~\ref{th:univ-tor}). For classical link diagrams in the torus it is only the numbers of $(p,q)$-torus knots  and null-homotopic unknots that count. It is worth pointing out that in the theory of toroidal multi-knotoids a $(p,q)$-torus knot is distinct from a $(q,p)$-torus knot as essential curves but isotopic as non-essential ones (see Remark~\ref{rem:inclusionpq}).

The notions of lifts for annular and toroidal multi-knotoids enabled us to establish further connections of these theories to the theory of mixed links  \cite{LR1} (see right-hand side of Figure~\ref{ex2}): in the first case to \textit{${\rm O}$-mixed} multi-knotoids in $S^{3}$, which are spatial multi-knotoids that contain a pointwise fixed unknotted component  representing the complementary solid torus of our thickened annulus \cite{La2} (Theorem~\ref{isopmixed}). In the second case to  \textit{${\rm H}$-mixed} multi-knotoids in $S^{3}$, which are spatial multi-knotoids that contain a  Hopf link as a pointwise fixed sublink, whose complement is our thickened torus (Theorem~\ref{H-isopmixed}). Representing annular and toroidal knotoids by mixed planar ones (with the assumption of some forbidden moves) can reduce their study to better understood objects. This approach was also used in \cite{DLM-pseudo} for representing annular and toroidal pseudo knots as planar pseudo knots.

We further adapt the annular and toroidal bracket polynomials to the mixed link settings, cf.~\cite{LR1}. We  introduce the finite and infinite variables polynomials {\it (universal) ${\rm O}$-mixed bracket} for planar ${\rm O}$-mixed multi-knotoids (Definition~\ref{mix-pkaufbst}), which we prove to be invariants under regular isotopy of ${\rm O}$-mixed multi-knotoids, equivalent to the (universal) annular bracket polynomial (Theorem~\ref{th:O-pkaufbst}). Further, we introduce the finite and infinite variables polynomials  {\it (universal, reduced) ${\rm H}$-mixed bracket} for planar ${\rm H}$-mixed multi-knotoids  (Definitions~\ref{mix-pkaufbtt} and~\ref{def:reduced-H-bracket}), which we prove to be invariants under regular isotopy of ${\rm H}$-mixed multi-knotoids, equivalent to the (universal) toroidal and the reduced toroidal bracket polynomials (Theorem~\ref{th:th-mix-in}). The  adaptation of the annular and toroidal bracket polynomials to  planar ${\rm O}$-mixed and ${\rm H}$-mixed  multi-knotoids was done by a direct translation of the corresponding state curves. As we note, if we started from planar ${\rm O}$-mixed and ${\rm H}$-mixed  multi-knotoids, the bracket states would involve larger sets of simple curves winding several times around the fixed components (see Figure~\ref{state-Hmixed}). Diagrams of this form have been used in the braid approach to skein modules of various 3-manifolds. For details cf. \cites{La2,D2} and references therein. The different sets of states are related via a change of bases in the corresponding Kauffman bracket skein modules.  

Finally, we normalize the (universal) mixed polynomials under the move R1, and thus obtain {\it ${\rm O}$-mixed and ${\rm H}$-mixed Jones polynomials}, invariants for annular and toroidal multi-knotoids through the theory of planar mixed multi-knotoids  (Theorem~\ref{th:Jones-H}).

We conclude this paper by computing the planar, annular and toroidal bracket polynomials of two examples in order to highlight cases where they  distinguish or not equivalent multi-knotoids. 

Apart from the interest in studying annular and toroidal multi-knotoids per se, another motivation for us is their relation to periodic tangloid diagrams in a ribbon and in the plane, respectively, through imposing one and two periodic boundary conditions by means of corresponding covering maps. See Figure~\ref{DP-multilinkoid} for an example of a doubly periodic tangloid, defined as the universal cover of a multi-knotoid in the thickened torus. This is the subject of ongoing work of the authors. For further details in periodic tangles, cf. for example \cite{DLM} and references therein. 

The paper is organized as follows. In \S~\ref{sec:setup} we recall the basic notions associated with spherical and planar multi-knotoids, including the Reidemeister equivalence, and the lift in three-dimensional space from \cite{T}. In \S~\ref{ankn} we extend the theory from planar to annular multi-knotoids, including results from \cite{D4}. We first present the annular Reidemeister equivalence. We then define the lifts of  annular multi-knotoids in the thickened annulus and their isotopy moves, which lead to their representation as planar ${\rm O}$-mixed multi-knotoids. We also explore the inclusion relations between annular, planar and ${\rm O}$-mixed multi-knotoids. In \S~\ref{sec:annular-bracket}, we extend the bracket polynomial for annular multi-knotoids and planar ${\rm O}$-mixed multi-knotoids, proving their invariance under regular isotopy, and we normalize them into Jones-type polynomials. In \S~\ref{sec:toroidal-knotoid} we introduce the theory of toroidal pseudo knots.  We first define the pseudo Reidemeister equivalence moves for toroidal pseudo knots. We then define the lift of toroidal pseudo knots into a closed pseudo curve in the thickened torus and the notion of  isotopy for such curves. The lift of toroidal pseudo knots leads to their representation as planar ${\rm H}$-mixed multi-knotoids. We also explore the inclusion relations between toroidal, annular and planar pseudo knots, as well as of ${\rm O}$-mixed and ${\rm H}$-mixed multi-knotoids. We then define the bracket polynomials for toroidal multi-knotoids and planar ${\rm H}$-mixed multi-knotoids in \S~\ref{sec:tor-bracket}, and we prove their invariance under regular isotopy. We further normalize these polynomials into Jones-type polynomials, defining new invariants for classifying toroidal multi-knotoids. Finally, in \S~\ref{computing_bracket} we make comparative computations of the planar, annular and toroidal bracket polynomials on specific examples. 

%%%%%%%%%%%%%%%%%%%%%%%%%%%%%%%%%%%%%%%%%%%%%%%%%%%%%%%%%%%%%%%%%%%%%%

\section{Preliminaries and notations}\label{sec:setup} 

In this section we recall basic notions from the theory of knotoids and we also add some extra remarks.

\subsection{Basics on knotoids}

Turaev introduced knotoids in \cite{T} as open knotted curves in oriented surfaces. More precisely, a {\it knotoid diagram} in a surface $\Sigma$ is a generic smooth immersion of the unit interval $[0, 1]$ in the interior of $\Sigma$, such that the only singularities are transversal double points endowed with over/under crossing information and shall be referred to as `crossings'. The images of 0 and 1 under the immersion are the {\it endpoints}, referred to by the terms `leg' and `head', respectively, and are distinct from  each other and the crossings. It is worth noting that the endpoints may be situated in  different regions of the diagram. Also, that a knotoid diagram inherently possesses a natural orientation from its leg to its head. An example of a knotoid diagram is illustrated in Figure~\ref{mkd}(a). 
 
A {\it knotoid} in the surface $\Sigma$ is defined as an equivalence class of knotoid diagrams in $\Sigma$, the equivalence relation being induced by surface isotopies and the well-known Reidemeister moves R1, R2, and R3, illustrated in Figure~\ref{rmoves}, all occurring away from the endpoints. A surface isotopy may swing an endpoint within its diagrammatic region. This move shall be called \textit{swing move}. Note also that a Reidemeister equivalence cannot exchange the roles of the endpoints. The {\it trivial knotoid} is an embedding of $[0, 1]$ in $\Sigma$. The set of knotoids in  $\Sigma$ is denoted as $\mathcal{K} (\Sigma)$.

\begin{figure}[H]
\begin{center}
\includegraphics[width=4.2in]{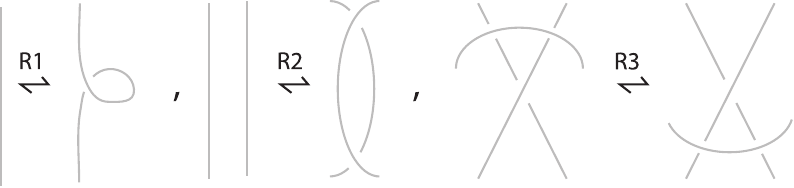}
\end{center}
\caption{The Reidemeister moves.}
\label{rmoves}
\end{figure}

Manipulating a strand adjacent to an endpoint, either by crossing over or under a transversal arc, is  prohibited, otherwise the theory trivializes. These {\it forbidden moves} are depicted in Figure~\ref{forbidoid}.

\begin{figure}[H]
\centerline{\includegraphics[width=4.3in]{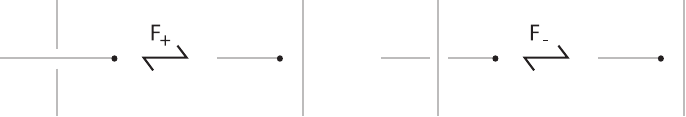}}
\vspace*{8pt}
\caption{The forbidden moves for knotoids.}
\label{forbidoid}
\end{figure}

Figure~\ref{ffm} illustrates two situations where apparently forbidden moves are performed. In the first case, an R1 move is followed by planar isotopy, while in the second case an R2 move is followed by planar isotopy. These moves are `fake forbidden moves'. 

\begin{figure}[H]
\begin{center}
\includegraphics[width=5.2in]{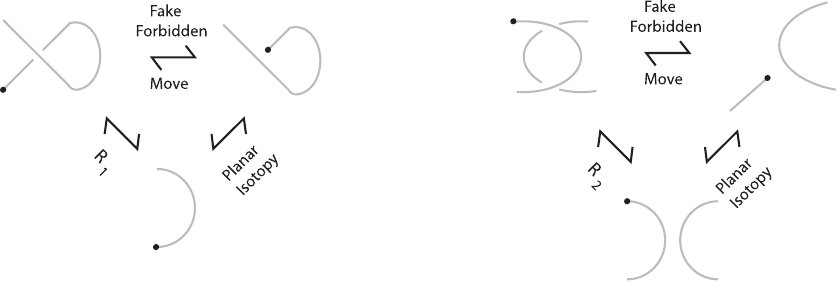}
\end{center}
\caption{Fake forbidden moves.}
\label{ffm}
\end{figure}

Knotoids can be extended into three broader concepts: {\it linkoids}, {\it multi-knotoids} and {\it multi-linkoids}. A {\it multi-knotoid} is defined as an equivalence class of a generic immersion  in the interior of $\Sigma$ of an oriented segment and a finite number of  circles, endowed with under/over data at the crossings, under the same equivalence relation as for knotoids \cite{T}. See Figure~\ref{mkd}(b) for an  example of a multi-knotoid.  An {\it oriented  multi-knotoid} is obtained by also assigning orientations to the closed components. Similarly, a linkoid is defined as an equivalence class of a generic immersion  in the interior of $\Sigma$ of finitely many oriented segments, endowed with under/over data at the crossings (see for example Figure~\ref{mkd}(c)), while a multi-linkoid is defined as an equivalence class of a generic immersion  in the interior of $\Sigma$ of a finite number of  oriented segments and  oriented circles, endowed with under/over data at the crossings (see Figure~\ref{mkd}(d)). In this paper we only focus on knotoids and multi-knotoids. Further, by  knotoids we shall be referring also to  multi-knotoids. 

\begin{figure}[H]
\begin{center}
\includegraphics[width=5.3in]{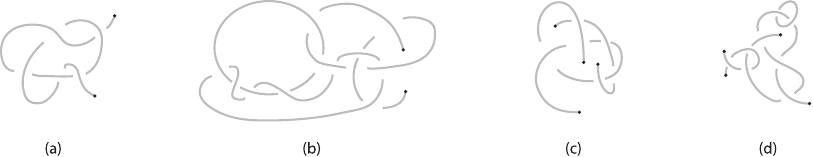}
\end{center}
\caption{(a) A knotoid, (b) a multi-knotoid, (c) a linkoid and (d) a multi-linkoid.}
\label{mkd}
\end{figure}

\subsection{Spherical and planar knotoids}\label{sec:sphere-plane}

In \cite{T} Turaev observes that there is a bijection between classical knot types and spherical knotoids having their endpoints lying in the same region of the diagram. 

In order, now, to understand the relation between spherical and planar knotoids let us first recall that: the 2-sphere $S^2$  can be viewed as the compactification of $\mathbb{R}^2$, $S^2=\mathbb{R}^2 \cup \{ \infty \}$, or equivalently, as the one-point identification of the boundary of a disc  or the gluing along the circular boundaries of two discs.  The inclusion  $\iota: \mathbb{R}^2 \hookrightarrow S^2$ resp. $\iota: D^2 \hookrightarrow S^2$ allows us to view any  knotoid diagram in the plane resp. in the disc as a spherical knotoid diagram. Only, this diagram is not allowed to undergo spherical moves. A {\it spherical move} is the sliding of an arc around the back surface of the sphere. On the other hand, spherical knotoid diagrams can be also represented as planar knotoid diagrams via the decompactification of $S^2$, for example using the stereographic projection, with the addition of the spherical move in their equivalence relation. In fact, as explained in \cite{T}, every spherical knotoid has a `normal' representative, a spherical knotoid diagram  containing the point at $\infty$ in the region of the leg. Then,  spherical knotoid diagrams correspond to  planar knotoid diagrams having the leg lying in the outer region.  This defines the inclusion map $\rho: \mathcal{K} (S^2) \hookrightarrow \mathcal{K} (\mathbb{R}^2)$ from spherical to planar knotoids, since normal diagrams are preserved by the Reidemeister moves and local surface isotopies. To summarize, planar knotoids surject to spherical knotoids but do not inject. This means that planar knotoids provide a much richer combinatorial structure than the spherical ones. See further comments in Subsection~\ref{sec:planar-lift}. 

\begin{remark}\label{rem:disk}
Since all moves in the planar equivalence are local, it follows clearly that the theory of planar multi-knotoids is equivalent to the theory of multi-knotoids in $D^2$.
\end{remark}

\begin{remark}\label{rem:planar-closure}
The above extend in an obvious manner to  multi-knotoids. There is a bijection between classical link types and spherical multi-knotoids having their endpoints lying in the same region of the diagram. Also, there is an injection of classical links to spherical  multi-knotoids, induced by the under-closure or over-closure of the endpoints \cite{T}. Further, there is an injection of spherical to planar ones. 
\end{remark}

\subsection{Lifting in three-space spherical and planar knotoids}\label{sec:planar-lift}

Turaev \cite{T} showed that there is an isomorphism of monoids of spherical knotoids and of simple $\Theta$-graphs, that gives rise to a geometric interpretation of spherical knotoids via $\Theta$-curves. Further, G\"ug\"umc\"u and Kauffman  \cite{GK} showed that a planar knotoid diagram can be lifted to a `rail curve' in three-space, an open curve in $\mathbb{R}^3$ using two specified parallel lines denoted $L$ and $H$, the `rails', passing through its endpoints $l$ and $h$.
The lift of a multi-knotoid together with its two rails shall be called \textit{spatial multi-knotoid}. See left hand side of Figure~\ref{planar-lifting} for an illustration.

\begin{figure}[H]
\begin{center}
\includegraphics[width=4.8in]{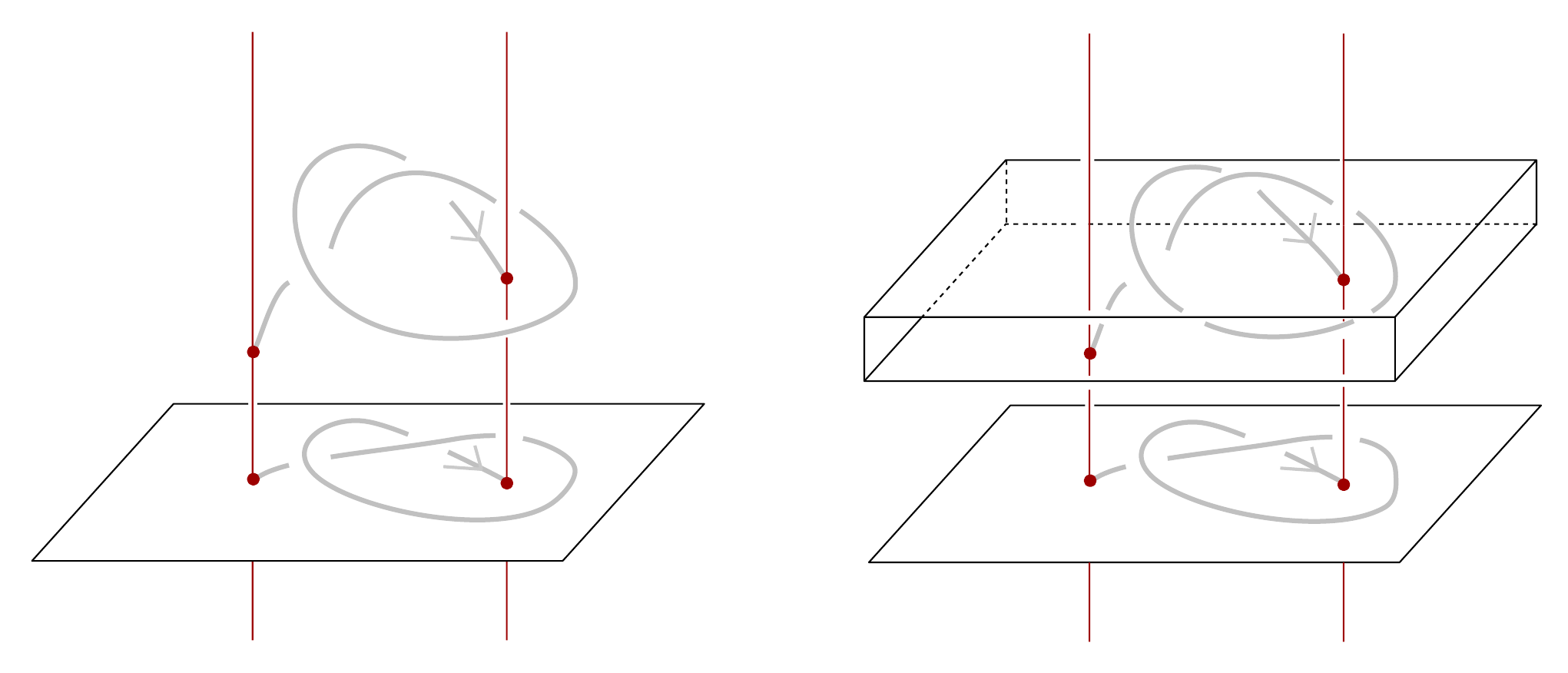}
\end{center}
\caption{Lifting a planar knotoid to a spatial knotoid in space and in the thickened square.}
\label{planar-lifting}
\end{figure}

Spatial multi-knotoids are considered up to {\it rail isotopy}. Rail isotopy consists in: (a), ambient isotopy of the rail curves in the complement of the rails  $L$ and $H$  in $\mathbb{R}^3$, where the endpoints of the curves remain on the rails throughout the deformation, but are allowed to slide along their rails, and (b), parallel shifts of the rails up to isotopy. These last moves correspond to endpoint swing moves on the diagrammatic level and shall be called {\it rail shifts}. Note that a rail is not permitted to cross  an arc of the rail curve, which would correspond to a forbidden move on the diagrammatic level. Clearly, a rail isotopy cannot exchange the roles of the rails $L$ and $H$. An example of a rail shift  and a sliding move are illustrated in Figure~\ref{railsliding} even if in a different context. The rail isotopy class of a spatial multi-knotoid shall be also referred to as {\it spatial multi-knotoid}.

A well-established result in \cite{GK} demonstrates that rail isotopy classes of lifted knotoids correspond bijectively to equivalence classes of planar knotoids, when projected on a plane perpendicular to the rails. For the proof of this equivalence, curves are considered to be piecewise-linear (PL). In the PL setting,  isotopy  is captured by a specific type of local  move, the triangular move or {\it $\Delta$-move}. This move and its inverse consists in replacing a line segment of the PL knotoid, corresponding to one side of a spatial triangle, by the other two sides of the triangle, provided that the interior of the triangle is free of any other arcs. 

\begin{remark}\label{thickdisc}
The locality of the isotopy moves makes the theory of spatial multi-knotoids equivalent to the  theory of isotopy classes of rail curves in the thickened disc $D^2 \times I$, where $I=[0,1]$ denotes the unit interval, or the thickened square $I \times I \times I$, such that the infinitely extended rails in the spatial setting correspond to two parallel rail segments, copies of the interval $I$ piercing the interior of $D^2 \times I$ or $I \times I \times I$, and where the curves are constrained in the interior of the space (with only the endpoints being allowed to touch the boundaries). See right hand side of Figure~\ref{planar-lifting}. We shall also refer to rail curves in this setting as {\it spatial multi-knotoids}. 
\end{remark}

The lifting of planar knotoids led in turn to the theory of rail knotoids, which are regular projection of rail curves on the plane of the two parallel lines  \cite{KoLa}. In an application direction, the fact that planar knotoids surject to spherical knotoids along with the lifting has found interesting implications in the study of proteins, initiated by  Goundaroulis \cite{GGLDSK}. Proteins are long chains of amino acids that sometimes form open ended knots. The lifting of planar knotoids was then used for representing open proteins by planar knotoids, thus advancing the study of their topological entanglement and their classification \cite{DGBS, GGLDSK}.

\subsection{The universal bracket polynomial for spherical and planar knotoids}\label{sec:planar-bracket}

We recall that the Kauffman bracket polynomial is an invariant of \textit{regular isotopy}, that is invariant under the Reidemeister moves R2 and R3. 
In \cite{T} the bracket polynomial $\langle \cdot \rangle$ is defined for spherical (multi-)knotoids, extending the Kauffman bracket polynomial for classical knots and links \cite{Kauffman1990} by an extra rule that evaluates the bracket of the trivial knotoid to 1. See rules (i)-(iii) and (v) in Definition~\ref{pkaufb} below. This polynomial is referred to as the {\it Turaev bracket polynomial} or, in this work, as the {\it spherical bracket polynomial}. In \cite{T} the spherical bracket polynomial is extended to the case of planar (multi-)knotoids. An adaptation of this polynomial in \cite{GGLDSK} is called the {\it Turaev loop bracket polynomial} or, here, the {\it planar bracket polynomial}. The planar bracket polynomial $\langle \cdot \rangle$ is a 2-variable generalization of the spherical bracket, with an extra rule introducing a new variable $v$ that counts nested loop components enclosing the trivial knotoid. More precisely:  

\begin{definition}\label{pkaufb}\rm
The {\it planar bracket polynomial} of a planar multi-knotoid diagram is a 2-variable Laurent polynomial in $\mathbb{Z}[A^{\pm 1}, v]$ defined by means of the following rules, inductively on the number of crossings in the diagram:
\begin{itemize}
    \item[i.] $\langle \ \raisebox{-2pt}{\includegraphics[scale=0.8]{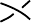}} \ \rangle = A \langle \ \raisebox{0pt}{\includegraphics[scale=0.5]{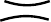}} \ \rangle + A^{-1} \langle \ \raisebox{-2.5pt}{\includegraphics[scale=0.6]{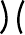}} \ \rangle$
    \vspace{.2cm}
    \item[ii.]  $\langle \ \raisebox{-2pt}{\includegraphics[scale=0.85]{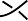}} \ \rangle = A^{-1} \langle \ \raisebox{0pt}{\includegraphics[scale=0.5]{skein-2.pdf}} \ \rangle + A \langle \ \raisebox{-2.5pt}{\includegraphics[scale=0.6]{skein-3.pdf}} \ \rangle $
     \vspace{.2cm}
    \item[iii.]$\langle L \sqcup \, \mathrm{O}^k \rangle  =  d^k \, \langle L \rangle$, where $d = -A^2 – A^{-2}$ and $\mathrm{O}^k$ stands for $k$ null-homotopic unknots.
    
     \vspace{.1cm}
     \item[iv.] $\langle  \raisebox{-6pt}{\includegraphics[scale=0.11]{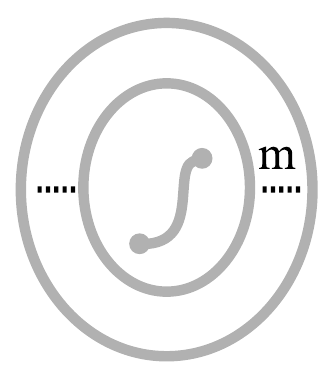}} \rangle = v^m \, \langle \raisebox{-1pt}{\includegraphics[scale=0.1]{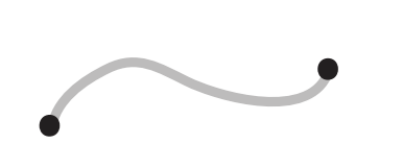}} \rangle$, where $m\in \mathbb{N}\cup \{0\}$ is the number of unknots enclosing the trivial knotoid.
     \vspace{.2cm}
 \item[v.] $\langle \raisebox{-1pt}{\includegraphics[scale=0.1]{arc.pdf}} \rangle = 1$
\end{itemize}
\end{definition}

Given a planar knotoid diagram $K$ one computes its planar bracket $\langle K \rangle$ by applying the rules in Definition~\ref{pkaufb} as follows: we first smoothen all crossings by rules (i) and (ii), and then get rid of all isolated unknots at the cost of the loop value $d$, using rule (iii). In the end all states reduce to configurations comprising a number of unknots enclosing the trivial knotoid, so we apply rule (iv) and finally rule (v). In Section~\ref{computing_bracket} we provide some computations of the planar bracket polynomial on specific examples.

In analogy to the classical bracket, the spherical and planar bracket polynomials for spherical and planar multi-knotoids are invariants of {\it regular isotopy}, that is, invariants under planar isotopy and the Reidemeister moves R2 and R3 (excluding R1). 

\begin{remark}
In the case of a spherical knot type knotoid, the evaluation of the spherical bracket polynomial coincides with that of the Kauffman bracket polynomial of the corresponding classical knot. Hence, and since the trivial knotoid, which appears in every state, is a knot type knotoid identifying with the unknot, rule (v) coincides with the initial condition of the classical bracket, so the spherical bracket polynomial coincides with the Kauffman bracket polynomial for classical knots. Analogous considerations apply for planar knot type knotoids, after specializing $v$ in rule (iv) to the loop value $d$. Indeed, identifying the trivial knotoid with the unknot in rule (iv), there are no more forbidden moves justifying rule (iv).
\end{remark}

\begin{remark}
When computing the planar bracket polynomial of a planar knotoid diagram $K$, rule (iv) may have to be applied. Viewing now $K$ as a spherical knotoid diagram, as discussed in Section~\ref{sec:sphere-plane}, and computing its  spherical  bracket polynomial, the loops appearing in rule (iv) reduce to loops as in rule (iii) by applications of the spherical move. Thus, in rule (iv) we obtain $m=0$, in other words the planar variable $v$  specializes to the loop value $d$. So the planar bracket polynomial  is consistent with the spherical  bracket polynomial. 
Conversely, let $K$ be a spherical knotoid diagram represented by a planar knotoid diagram $K^\prime$, using the relation between planar and spherical knotoids described in Section~\ref{sec:sphere-plane}. Since its leg lies in the outer region of the plane, when computing the planar bracket of $K^\prime$, there will be no occurence of rule (iv). So the spherical bracket polynomial  is consistent with and can be computed through the  planar bracket polynomial.  For the extension of the Turaev  bracket polynomial to linkoids in $S^2$  cf. \cite{Eleni}. 
\end{remark}

Rule (iv) can be which generalize the generalized to an infinite variable expression by substituting $v^m$ by $v_m$. 

\begin{definition}\label{u-planarskein}
The {\it universal planar bracket polynomial} for planar multi-knotoids, denoted $ \langle \cdot \rangle_U $, is defined by means of rules (i), (ii), (iii), (v) of Definition~\ref{pkaufb} and rule (iv$^{\prime}$) below:

\begin{center}
[iv$^\prime$.] $\langle  \raisebox{-6pt}{\includegraphics[scale=0.11]{turloop.pdf}} \rangle_U = v_m \, \langle \raisebox{-1pt}{\includegraphics[scale=0.1]{arc.pdf}} \rangle_U$, where $m$ is the number of unknots enclosing the trivial knotoid.
\end{center}
\end{definition}

Then we have the following:

\begin{theorem}\label{th:universal-bracket-regular-isotopy}
The universal planar bracket polynomial  is an infinite variable Laurent polynomial $ \langle \cdot \rangle_U  \in \mathbb{Z}\left[A^{\pm 1}, v_m \right], \ m \in \mathbb{N}\cup \{0\}$, which is a regular isotopy invariant for planar multi-knotoids.
\end{theorem}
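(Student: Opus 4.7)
The plan is to mirror the classical proof of invariance of the Kauffman bracket under R2 and R3, adapted to the knotoid setting, and to verify that the refinement of rule (iv) to rule (iv$^\prime$), which records the nesting depth $m$ of unknots enclosing the arc via a separate variable $v_m$, is compatible with the local nature of the two moves.

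As a first step, I would establish well-definedness by exhibiting $\langle \cdot \rangle_U$ as a closed state sum. Exhaustive application of rules (i) and (ii) to the $c$ crossings of a diagram $K$ yields $2^c$ states $S$, each consisting of the trivial knotoid arc together with a disjoint collection of simple closed curves in $\mathbb{R}^2$, weighted by a Laurent monomial $\langle K \mid S\rangle \in \mathbb{Z}[A^{\pm 1}]$. In any state $S$, the closed curves partition into those whose bounded disc is disjoint from the arc (numbering $k_S$) and those whose bounded disc contains the arc (numbering $m_S$); rules (iii) and (iv$^\prime$) act on these disjoint classes and clearly commute, producing the finite formula
\begin{equation*}
\langle K \rangle_U \;=\; \sum_S \langle K \mid S\rangle \cdot d^{k_S} \cdot v_{m_S} \;\in\; \mathbb{Z}\left[A^{\pm 1}, v_m \right].
\end{equation*}
Planar isotopy preserves the coefficient $\langle K \mid S\rangle$ and the combinatorial data $(k_S, m_S)$ in every state, so the state sum is invariant under planar isotopy.

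For R2 invariance, I would take $K$ and $K'$ differing by an R2 move performed in a small disc $D \subset \mathbb{R}^2$ disjoint from the arc endpoints, and compare state sums locally. Each state $S'$ of $K'$ corresponds to four states of $K$ obtained by smoothing the two R2 crossings in $D$ while all smoothings outside $D$ remain fixed. Inside $D$ the four local smoothings yield either the two parallel strands of $S'|_D$, or the two parallel strands together with a small circle $C \subset D$. By locality $C$ bounds a sub-disc of $D$ disjoint from the arc, hence contributes to $k_S$ and is handled by rule (iii) with a factor of $d$. The classical Kauffman identity with $d = -A^2 - A^{-2}$ then collapses the four weighted local contributions to the single local weight of $S'$, giving $\langle K\rangle_U = \langle K'\rangle_U$. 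Invariance under R3 reduces to R2 invariance by Kauffman's standard argument: smoothing a single crossing of the R3 triangle on both sides of the move expresses each bracket as a sum of two configurations related by R2 moves, which then match by the R2 invariance already established.

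The main obstacle to verify is the separability claim used in the R2 step: the small circle $C$ produced in a loop smoothing must always be disjoint from, and not enclosing, the knotoid arc, even when the R2 bigon involves the arc strand itself. This is resolved again by locality. The circle $C$ lies in a proper sub-disc of the small bigon region, and an arc strand traversing $D$ must enter and exit $D$ through its boundary; it cannot therefore lie partly inside and partly outside the sub-disc bounded by $C$ without an additional crossing inside $D$ or a forbidden move at an endpoint. Hence rule (iv$^\prime$) is never triggered by local R2 or R3 simplifications, and the proof reduces to the standard bracket computation.
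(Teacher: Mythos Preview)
Your proof is correct and follows essentially the same approach as the paper: both reduce well-definedness to the standard state-sum formulation and reduce regular isotopy invariance to the classical Kauffman bracket computation, the key observation being that the extra loop created in an R2 smoothing lies in the local bigon and hence never encloses the knotoid arc, so only rule (iii) and never rule (iv$^\prime$) is triggered. The paper's version is terser, simply citing the already-established invariance of the (non-universal) planar bracket $\langle\cdot\rangle$ and noting that the passage from $v^m$ to $v_m$ changes nothing since rules (i)--(iii) alone govern the R2/R3 verification, whereas you spell out the locality argument explicitly.
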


\begin{proof}
For the well-definedness we argue as follows:  crossing smoothings use only rules (i)-(ii) for reducing to states, so independence of the sequence of crossings follows immediately as for the Turaev bracket for surface $\Sigma = D^2$. Then we are left with a number  of null-homotopic unknots and the trivial knotoid with a number of  unknots enclosing it, which can be all assumed unordered, since rules (iii) and (iv$^{\prime}$) (or rule (iv)) do not `see' any ordering. So. rule (iv$^{\prime}$) of Definition~\ref{u-planarskein} in place  of rule (iv) captures bijectively the infinitely many variables. Now, the forbidden moves ensure that all unknotted components are locked in their positions up to ordering, so there is no ambiguity in applying rule (iii) for the null-homotopic ones or rule (iv$^{\prime}$) (or rule (iv)) for the  ones nesting the trivial knotoid. Hence the well-definedness is concluded. Finally, invariance under regular isotopy of $\langle \cdot \rangle_U$ involves only rules (i)-(iii) and follows directly from the invariance of the planar bracket polynomial $\langle \cdot \rangle$ for planar multi-knotoids.
\end{proof}

The universal bracket polynomial realizes the Kauffman bracket skein module for planar knotoids, compare with \cite{GG}. In complete analogy to the classical bracket, one can arrive at an alternative definition of the spherical/planar bracket polynomials and the universal planar bracket polynomial via weighted state summations, where different types of closed components receive different evaluations: 

\begin{definition}
The spherical/planar bracket polynomial of a spherical/planar multi-knotoid diagram and the universal planar bracket polynomial of a planar multi-knotoid diagram $K$ are defined via the following weighted {\it state summation} formuli: 

\begin{equation*}\label{sskbst}
\langle K \rangle\ = \underset{s\in S(K)}{\sum}\, A^{\sigma_s}\, d^{k_s}\, v^{m_s}  \quad {\rm and} \quad \langle K \rangle_U\ = \underset{s\in S(K)}{\sum}\, A^{\sigma_s}\, d^{k_s}\, v_{m_s}
\end{equation*} 

\noindent where $\sigma_s$ is the number of A-smoothings minus the number of B-smoothings applied to the crossings of $K$ in order to obtain the state $s$ (see Figure~\ref{smoothing}), $d=-A^2-A^{-2}$, $k_s$ is the number of null-homotopic unknots and $m_s$ is the number of unknots enclosing the trivial knotoid in the state $s$. Note that $m_s = 0$ in the spherical case.
\end{definition}

\begin{figure}[H] 
\begin{center} 
\includegraphics[width=2.7in]{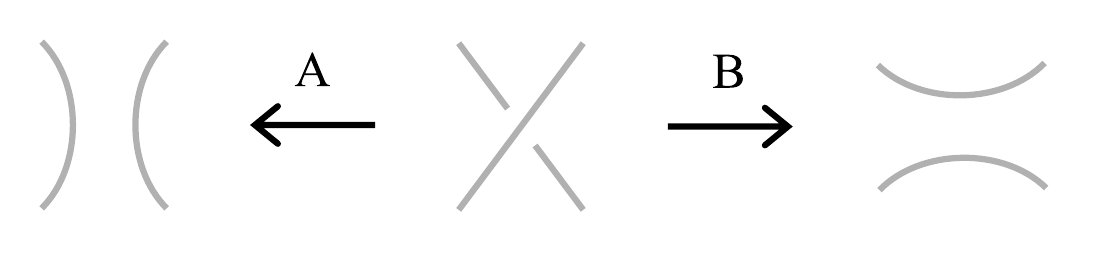} 
\end{center} 
\caption{The two types of crossing smoothing.} 
\label{smoothing} 
\end{figure} 

Denoting  $|s|$ the number of components of a state $s$ of $K$  implies that $ k_s + m_s  = |s|-1$. Further setting $v=d$ recovers the bracket polynomial for multi-knotoids as Turaev defined it for any oriented surface \cite{T}. 

\smallbreak
As in the case of the Kauffman bracket polynomial for classical link diagrams, the spherical and (universal) planar bracket polynomials can be normalized so as to also respect Reidemeister move 1, by considering the product of $\langle K \rangle$ and $\langle K \rangle_U$ by the factor $\left( -A^{-3}\right) ^{w(K)}$, where $w(K)$ is the writhe of the knotoid diagram $K$, defined as the number of positive crossings minus the number of negative crossings of (the oriented) $K$, where the rules of signs are depicted in Figure~\ref{si}.

\begin{figure}[H] 
\begin{center} 
\includegraphics[width=1.5in]{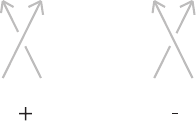} 
\end{center} 
\caption{The signs of oriented crossings.} 
\label{si} 
\end{figure} 

The {\it normalized spherical/planar bracket polynomial} for spherical resp. planar knotoids and the {\it normalized universal planar bracket polynomial} are analogous to the normalized  bracket polynomial for classical knots in $S^3$, which is equivalent to the classical Jones polynomial via the change of variable $A = t^{-1/4}$. So, we have the following (cf. \cite{T}, \cite{GGLDSK}):

\begin{theorem}\label{th:pl-bracket}
Let $K$ be a spherical/planar multi-knotoid diagram. The {\it spherical/planar Jones polynomials} and the {\it universal planar Jones polynomial} 
\[
V(K)\ =\ (-A^3)^{-w(K)}\, \langle K \rangle\ \ \ {\rm and}\ \ \ V_U(K)\ =\ (-A^3)^{-w(K)}\, \langle K \rangle_U
\]
\noindent where $w(K)$ is the writhe of $K$,  $A = t^{-1/4}$, $\langle K \rangle$ the spherical/planar bracket polynomial of $K$ and $\langle K \rangle_U$  the universal planar bracket polynomial of $K$, are isotopy invariants of spherical/planar multi-knotoids.
\end{theorem}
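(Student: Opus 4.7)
The plan is to reduce the statement to the standard argument used for the classical Kauffman bracket and Jones polynomial, adapted to the knotoid setting. By Theorem~\ref{th:universal-bracket-regular-isotopy} (and the analogous fact for $\langle \cdot \rangle$ recorded in the remarks preceding the statement), both $\langle K \rangle$ and $\langle K \rangle_U$ are already regular isotopy invariants, that is, invariant under planar isotopy and under R2, R3. Moreover, the writhe $w(K)$ is well defined on a multi-knotoid diagram because the knotoid strand carries a canonical orientation from leg to head, the closed components carry prescribed orientations, and $w(K)$ is unchanged by R2 and R3 (opposite-sign crossings are introduced or removed in pairs by R2, while R3 preserves the signs of the three participating crossings). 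Hence the product $(-A^3)^{-w(K)}\langle K \rangle$ and $(-A^3)^{-w(K)}\langle K \rangle_U$ are automatically regular isotopy invariants.

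It therefore remains only to check invariance under R1. First I would verify the usual local computation by smoothing a single positive or negative kink with rules (i)--(ii) and applying rule (iii) to the resulting free loop: this yields
\[
\langle \raisebox{-3pt}{\includegraphics[scale=0.5]{curl-pos.pdf}}\, \rangle = -A^{3}\,\langle \raisebox{-3pt}{\includegraphics[scale=0.5]{arc-flat.pdf}}\, \rangle, \qquad \langle \raisebox{-3pt}{\includegraphics[scale=0.5]{curl-neg.pdf}}\, \rangle = -A^{-3}\,\langle \raisebox{-3pt}{\includegraphics[scale=0.5]{arc-flat.pdf}}\, \rangle,
\]
exactly as in the classical bracket. (If any of these image files is not available I would simply describe the kinks verbally; the computation is $A\cdot d + A^{-1} = A(-A^2-A^{-2})+A^{-1} = -A^3$.) Crucially, this computation uses only rules (i)--(iii), which are identical for $\langle \cdot \rangle$ and $\langle \cdot \rangle_U$; the rules distinguishing them, namely (iv) versus (iv$^{\prime}$), are never invoked when processing an R1 kink since the kink is a null-homotopic loop that does not enclose the trivial knotoid arc (and, by the forbidden moves, an R1 move can never occur at an endpoint so it does not change the nesting structure around the trivial knotoid).

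Next I would combine this with the behavior of the writhe. Adding a positive (resp.\ negative) curl increases (resp.\ decreases) $w(K)$ by $1$, so under a positive R1 move $K\mapsto K^\prime$ one has
\[
V(K^\prime) = (-A^3)^{-w(K)-1}\langle K^\prime \rangle = (-A^3)^{-w(K)-1}\,(-A^3)\,\langle K \rangle = V(K),
\]
and similarly for a negative R1 move and for $V_U$. This closes the argument under all Reidemeister moves.

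I expect no substantive obstacle; the only points requiring care are (a) that $w(K)$ is legitimately well defined on a multi-knotoid diagram, which is handled by the canonical orientation of the knotoid arc, and (b) that the computation of the R1 factor is insensitive to the choice between rules (iv) and (iv$^{\prime}$), so that the same proof works verbatim for both $V$ and $V_U$. Finally, the substitution $A = t^{-1/4}$ is a formal change of variable and produces no additional content beyond bringing the invariants into Jones-polynomial form, recovering the classical Jones polynomial on knot-type knotoids (after specializing $v$, resp.\ the $v_m$, to $d$) in accordance with the remarks immediately following Definition~\ref{pkaufb}.
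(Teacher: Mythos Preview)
Your argument is correct and is exactly the standard normalization argument for the Kauffman bracket adapted to multi-knotoids. Note, however, that the paper does not actually supply a proof of this theorem: it is stated as a known result with the attribution ``cf.\ \cite{T}, \cite{GGLDSK}'' and no proof environment follows. Your write-up is precisely the argument one finds in those references, so there is nothing to compare beyond observing that you have filled in what the paper leaves to citation.
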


\section{Annular knotoids}\label{ankn}

In this section we introduce the theory of knotoids and multi-knotoids in the annulus $\mathcal{A}$, collectively referred to as annular multi-knotoids. The annulus $\mathcal{A}$ is the space $ S^1 \times D^1$ with two circular boundary components, and may be also represented as an once punctured disc, meaning a disc with a hole at its center, or as the identification space of a square with two parallel sides identified. We study annular knotoids in different geometric contexts, such as through their lifts to open curves with endpoints in the thickened annulus and through their representations as planar ${\rm O}$-mixed knotoids, that is, planar multi-knotoids with one fixed unknotted component.  We then highlight inclusion relations between planar knotoids and annular knotoids. Finally, we also define the Turaev loop bracket polynomial for annular knotoids and ${\rm O}$-mixed knotoids.

\subsection{Annular knotoid diagrams and their equivalence}\label{sec:annular-equivalence}

\begin{definition}
An {\it annular multi-knotoid diagram} is a  smooth immersion of the unit interval $[0, 1]$ and a number of circles in the interior of the annulus $\mathcal{A}$, such that the only singularities are transversal double points, the crossings, endowed with over/under crossing information. An {\it annular knotoid diagram} is a multi-knotoid diagram with no closed components.
\end{definition}

Figure~\ref{an}(a) illustrates an annular knotoid diagram. An annular knotoid diagram inherently possesses a natural orientation  from its {\it leg} (the image of $0$) to its {\it head } (the image of $1$). An {\it oriented annular multi-knotoid diagram} is obtained by also assigning orientations to the closed components.

\begin{definition}\label{anpl}
An (oriented)  {\it annular multi-knotoid} is defined as an equivalence class of (oriented)  annular multi-knotoid diagrams under surface isotopies and all versions of the (oriented)  classical Reidemeister moves, as exemplified in Figure~\ref{rmoves}, all together comprising the (oriented)  {\it Reidemeister equivalence} for annular multi-knotoids. Note that surface isotopies include the endpoint swing moves in their diagrammatic regions. 
\end{definition}

\subsection{Lifting annular knotoids}\label{sec:annular-lift}

We shall now define the lift of an annular multi-knotoid as a  `rail curve' and a collection of closed curves in the thickened annulus, in analogy to the theory of spatial  multi-knotoids (recall Subsection~\ref{sec:planar-lift}). We consider the thickening $\mathcal{A} \times I$, where $I$ denotes the unit interval (see Figure~\ref{an}). Note that this space is homeomorphic to the solid torus.

\begin{figure}[H]
\begin{center}
\includegraphics[width=3.9in]{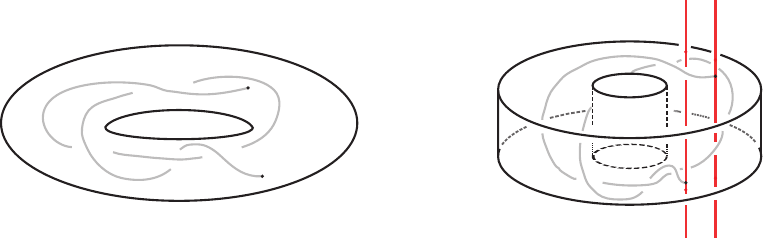}
\end{center}
\caption{(a) An annular knotoid diagram; (b) its lift in ST $\sim \mathcal{A} \times I$, where the rails are extended above and below for visual purposes. }
\label{an}
\end{figure}

\begin{definition} \label{solidlift}
The {\it lift} of an (oriented) annular multi-knotoid diagram in the thickened annulus $\mathcal{A} \times I$ is defined so that: each classical crossing is embedded in a sufficiently small 3-ball that lies in the interior of the thickened annulus,  the simple arcs connecting the crossings can be replaced by isotopic ones in the interior of $\mathcal{A} \times I$, and  the endpoints are attached to two fixed parallel segments, the rails $L$ and $H$,  which are perpendicular to the annulus $\mathcal{A}$ piercing its interior. The resulting lifted multi-knotoid together with its two rails is called a {\it multi-knotoid in the thickened annulus} and it is a collection of an open curve with its endpoints attached to the rails, and a number of closed curves, all constrained to the interior of the thickened annulus $\mathcal{A} \times I$, with the exception of the endpoints which may lie in the interiors of $\mathcal{A} \times \{0\}$ and $\mathcal{A} \times \{1\}$. See Figure~\ref{an}(b) for an illustration. 
\end{definition}

\begin{definition} \label{solidliftisotopy}
Two (oriented) multi-knotoids in the thickened annulus $\mathcal{A} \times I$ are said to be {\it rail isotopic} if they are related by an ambient isotopy of $\mathcal{A} \times I$ that takes place in its interior and in the complement of the two rails $L$ and $H$, keeping the endpoints of the open curve attached to their respective rails, being allowed to slide along them, together with parallel shifts of the rails up to isotopy, the rail shifts (compare with Section\ref{sec:planar-lift}). The rail isotopy class of a multi-knotoid in the thickened annulus shall be also referred to as {\it  multi-knotoid in the thickened annulus}.
\end{definition}

We now state the following  theorem, whose proof is detailed in the next Subsection~\ref{sec:annular-inclusions}:

\begin{theorem}\label{isopST}
Two multi-knotoids in the thickened annulus $\mathcal{A} \times I$ are rail isotopic if and only if any two corresponding annular multi-knotoid diagrams of theirs  projected to the annulus $\mathcal{A} \times \{0\}$, are Reidemeister equivalent.
\end{theorem}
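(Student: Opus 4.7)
The plan is to adapt to the annular setting the strategy of G\"ug\"umc\"u--Kauffman for planar knotoids \cite{GK}, proving each direction by a direct translation of ambient topology into diagrammatic moves and vice versa, working in the piecewise-linear (PL) category.

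For the forward direction, each generator of the annular Reidemeister equivalence of Definition~\ref{anpl} is realized as an explicit ambient isotopy of $\mathcal{A}\times I$ that avoids the rails $L$ and $H$. The three Reidemeister moves R1, R2, R3 are supported in a small $3$-ball in the interior of $\mathcal{A}\times I$ in which the rest of the lift and both rails are absent; a local surface isotopy away from the endpoints lifts to an ambient isotopy in a product region $U\times I$ with $U\subset \mathcal{A}$ disjoint from the rails; and an endpoint swing move within its diagrammatic region is realized by a rail shift of the corresponding rail together with a sliding of the endpoint along it. Hence the lifts of two Reidemeister-equivalent annular diagrams are rail isotopic.

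For the converse direction, I would represent the two rail-isotopic lifts by PL rail curves and decompose a realizing rail isotopy into a finite sequence of $\Delta$-moves performed in $(\mathcal{A}\times I)\setminus(L\cup H)$, together with rail shifts and slidings of endpoints along the rails. After a generic small perturbation one may assume that throughout the isotopy the vertical projection to $\mathcal{A}\times\{0\}$ is a regular diagram, and that the triangle supporting each $\Delta$-move projects to a (possibly degenerate) planar triangle whose interior meets the projection of the rest of the curve transversally. A finite case analysis on the number of arcs crossing the projected triangle and on the over/under data at its vertices then identifies the projected effect as either a trivial change, a local planar isotopy in $\mathcal{A}$, or exactly one of the Reidemeister moves R1, R2, R3. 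Rail shifts and endpoint slidings project to endpoint swing moves and local isotopies in the endpoint's region, respectively.

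The main obstacle is ruling out the forbidden moves of Figure~\ref{forbidoid}: a projected $\Delta$-move would trace out a forbidden move only if its supporting triangle contained an arc of a rail in its interior or along one of its edges. This is precisely excluded by the requirement that rail isotopy take place in the complement of $L\cup H$, and analogously rail shifts are permitted only when the moving rail remains disjoint from the lifted curve. The global topology of the annulus never enters the local analysis, so the argument reduces to the planar case of \cite{GK}, the only difference being that surface isotopies on $\mathcal{A}$ now include windings around the puncture; these are automatically realized as ambient isotopies in $\mathcal{A}\times I$ disjoint from the rails and therefore pose no additional difficulty.
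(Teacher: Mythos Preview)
Your proof is correct and essentially coincides with what the paper presents as its \emph{alternative} argument: a direct adaptation of the $\Delta$-move analysis of \cite{GK} to the annular setting, using locality to confine each elementary isotopy to a $3$-ball in the interior of $\mathcal{A}\times I$ and then projecting, with the forbidden moves excluded precisely because the $\Delta$-triangles avoid the rails.

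The paper's \emph{primary} proof takes a slightly different route. Rather than redoing the $\Delta$-move case analysis inside $\mathcal{A}\times I$, it exploits the inclusion of the thickened annulus into an enclosing $3$-ball (homeomorphic to $D^2\times I$) to regard each local isotopy move as a rail-isotopy move on the corresponding \emph{spatial} multi-knotoid in $\mathbb{R}^3$, and then invokes \cite[Theorem~2.1]{GK} verbatim; since the move respects the embedded thickened annulus, the resulting planar Reidemeister move restricts to one on $\mathcal{A}\times\{0\}$ via the inclusion of the annulus in a disc. Your approach has the advantage of being self-contained and of making explicit why the forbidden moves cannot arise (the supporting triangle would have to meet a rail). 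The paper's inclusion argument is more economical, avoiding any repetition of the PL case analysis, at the cost of checking that the reduction through the ambient $3$-ball genuinely respects the annular topology.
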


\subsection{Inclusions}\label{sec:annular-inclusions}

The inclusion of a disc or a square in the annulus enables us to view planar multi-knotoid diagrams as annular ones. View left hand side of Figure~\ref{planarannular} and  left hand illustration of Figure~\ref{lift}. Further, the equivalence moves are preserved,  so we have a well-defined map $\iota: \mathcal{K}(D^2) \longrightarrow \mathcal{K}(\mathcal{A})$, which is in fact  an injection, since the equivalence moves are  the same in both settings. On the other hand, the inclusion of the annulus in a disc (see right hand side of Figure~\ref{planarannular}) induces a surjection of the theory of annular multi-knotoids onto planar  multi-knotoids, where essential components are mapped to usual components. Namely, we can state the following.

\begin{proposition}\label{discannulus}
The map $\iota: \mathcal{K}(D^2) \longrightarrow \mathcal{K}(\mathcal{A})$ is an injection. Moreover, there is a  surjection $\rho$ of the set $ \mathcal{K}(\mathcal{A})$  onto the set $\mathcal{K}(D^2)$.   
\end{proposition}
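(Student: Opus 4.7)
The plan is to build both maps at the diagrammatic level, verify they descend to equivalence classes, and then observe that they compose to the identity on $\mathcal{K}(D^2)$; this simultaneously gives injectivity of $\iota$ and surjectivity of $\rho$. First I would fix a concrete embedding realizing $\iota$: place $D^2$ inside $\mathcal{A}$ as a sub-disc contained in the interior of $\mathcal{A}$ and disjoint from the inner boundary circle. Any planar multi-knotoid diagram then sits tautologically in this sub-disc and defines an annular diagram. Well-definedness on equivalence classes is immediate: the Reidemeister moves R1, R2, R3 are strictly local, and any surface isotopy of $D^2$ (including the endpoint swings of Definition~\ref{anpl}) extends by the identity on $\mathcal{A}\setminus\iota(D^2)$ to an ambient isotopy of $\mathcal{A}$.

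For $\rho$, I would realize the annulus as a once-punctured disc $D^2\setminus\{p\}$, or equivalently fill in the inner hole of $\mathcal{A}$ with a disc, so that any annular diagram is literally a planar diagram in the resulting $D^2$. The Reidemeister moves transfer verbatim, and a surface isotopy of $\mathcal{A}$, viewed as a continuous family of self-homeomorphisms of $D^2\setminus\{p\}$, extends canonically to an isotopy of $D^2$ fixing $p$; endpoint swings and the forbidden-move prohibition around the endpoints are preserved as well. Thus $\rho:\mathcal{K}(\mathcal{A})\to\mathcal{K}(D^2)$ is well-defined, with essential circles of $\mathcal{A}$ collapsing to null-homotopic unknots in $D^2$. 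Finally, arranging the embeddings so that $\iota(D^2)\subset\mathcal{A}\subset D^2$ (the outer $D^2$ being the one used to define $\rho$), the composition $\rho\circ\iota$ is literally the identity on diagrams; hence $\rho\circ\iota=\mathrm{id}_{\mathcal{K}(D^2)}$, and $\iota$ is injective while $\rho$ is surjective.

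The step requiring most care is the well-definedness of $\rho$: one must check that every annular Reidemeister equivalence really does yield a planar Reidemeister equivalence once the hole is filled. The key observation is that filling the hole can only enlarge the pool of available isotopies of the surface, never remove an existing equivalence, so no annular equivalence is invalidated under $\rho$. Once this is granted, the rest of the argument is formal and follows from the one-sided inverse relation $\rho\circ\iota=\mathrm{id}$.
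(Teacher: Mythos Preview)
Your proposal is correct and aligns with the paper's approach: the paper also constructs both maps from the inclusions $D^2\hookrightarrow\mathcal{A}$ and $\mathcal{A}\hookrightarrow D^2$ in the paragraph preceding the proposition, though it argues injectivity and surjectivity separately (injectivity because ``the equivalence moves are the same in both settings'', surjectivity directly from the second inclusion) rather than via the single composition $\rho\circ\iota=\mathrm{id}$. Your one-sided inverse argument is a slightly cleaner packaging of the same underlying observation.
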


\begin{figure}[H] 
\begin{center} 
\includegraphics[width=6in]{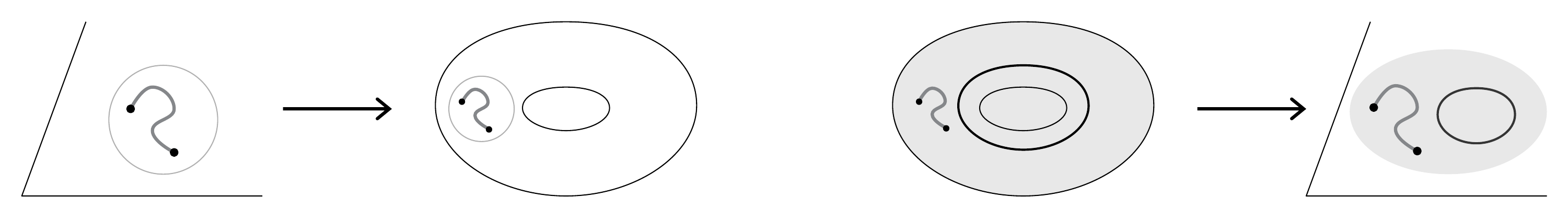} 
\end{center} 
\caption{Inclusion relations: a disc in the annulus and  the annulus in a disc.} 
\label{planarannular} 
\end{figure} 

In terms of lifts, the inclusion of the thickened disc $D^2 \times I$  or the thickened square $I \times I \times I$ (which are homeomorphic to a three-ball)  in the thickened annulus allows one to view spatial multi-knotoids as multi-knotoids in $\mathcal{A} \times I$, recall Remark~\ref{thickdisc}. On the other hand, the inclusion of the thickened annulus in a thickened disc (thickened version of Figure~\ref{planarannular}) allows one to view multi-knotoids in $\mathcal{A} \times I$ as spatial multi-knotoids. 

The inclusions above allow us now to prove Theorem~\ref{isopST} by adapting the arguments for spatial knotoids in $\mathbb{R}^3$ \cite{GK} to the setting of multi-knotoids in the thickened annulus $\mathcal{A} \times I$.

\smallbreak
\noindent \textit{Proof of Theorem~\ref{isopST}.}
For the proof we work in the PL category and we adapt the setting of multi-knotoids in the thickened annulus $\mathcal{A} \times I$ to that of spatial knotoids, so as to apply the arguments of \cite{GK}. 

An isotopy move applied to a multi-knotoid in $\mathcal{A} \times I$ away from the rails, is local and can thus be confined within a three-ball contained in the interior of $\mathcal{A} \times I$. Then, by the inclusion of the thickened annulus in a three-ball  in $\mathbb{R}^3$ (in the shape of $D^2\times I$), the isotopy move can be considered as a rail isotopy move in three-space on the corresponding spatial multi-knotoid,  as described in Section~\ref{sec:planar-lift} and Remark~\ref{thickdisc}, and this move respects the topology of the thickened annulus. On the diagrammatic level, by Theorem 2.1 in  \cite{GK}, the spatial isotopy move translates to a local move of the Reidemeister equivalence for planar multi-knotoids, away from the endpoints. Furthermore, by the inclusion of the annulus  $\mathcal{A} \times \{0\}$  in a disk in $\mathbb{R}^2$, and since the topology of the embedded thickened annulus is respected, this local move takes place in the interior of the annulus $\mathcal{A} \times \{0\}$. 

Finally, a sliding move of an endpoint along its rail can be sufficiently localized, so as to project perpendicularly on the annulus, resulting in the move to be invisible on the diagrammatic level.  Hence the proof is concluded. 

Alternatively, Theorem~\ref{isopST} can be proved by a direct adaptation of the arguments of \cite{GK} to the case of the thickened annulus. An isotopy move applied to a multi-knotoid in $\mathcal{A} \times I$ away from the rails, is local and can thus be confined within a 3-ball contained in the interior of $\mathcal{A} \times I$. Then, there is a  projection of the 3-ball to the upper annulus  $\mathcal{A} \times \{0\}$ which is regular with respect to  this move, by a general position argument, so that it translates into a diagrammatic move of  Reidemeister equivalence away from the endpoints, as in Theorem 2.1 of  \cite{GK}. For the sliding moves the same argument as above applies.
\hfill {\qedsymbol{}}

\smallbreak

Combining Theorem~\ref{isopST} and Proposition~\ref{discannulus} we obtain an injection on the level of isotopy classes of spatial multi-knotoids into the set of multi-knotoids in the thickened annulus (view also Figure~\ref{lift})  and a surjection of the theory of  multi-knotoids  in $\mathcal{A} \times I$ onto  spatial multi-knotoids, where essential components are mapped to usual components. Namely:

\begin{proposition}\label{thickdiscannulus}
The map $\iota: \mathcal{K}(D^2 \times I) \longrightarrow \mathcal{K}(\mathcal{A} \times I)$ is an injection. Moreover, there is a  surjection $\rho$ of the set $ \mathcal{K}(\mathcal{A}\times I)$  onto the set $\mathcal{K}(D^2\times I)$. 
\end{proposition}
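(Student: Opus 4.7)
The plan is to reduce both claims to the diagrammatic statement of Proposition~\ref{discannulus} via the projection--lifting bijection of Theorem~\ref{isopST}, together with its planar counterpart for the thickened disc (cf.\ Remark~\ref{thickdisc} and the $\Delta$-move arguments in \cite{GK}), which identifies rail isotopy classes of spatial multi-knotoids in $D^2\times I$ with Reidemeister equivalence classes of planar multi-knotoid diagrams. Once this two-dimensional/three-dimensional dictionary is in place on both sides, the injection and the surjection on lifts become straightforward consequences of the corresponding statements for diagrams.

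I would define $\iota$ geometrically by choosing an embedding of $D^2\times I$ as a thickened sub-disc of $\mathcal{A}\times I$ (for example the product of a disc embedded in $\mathcal{A}$ with the interval $I$), so that the two rails of the spatial multi-knotoid become the two rails of the ambient $\mathcal{A}\times I$. Well-definedness of $\iota$ is immediate, since any rail isotopy inside $D^2\times I$ is automatically a rail isotopy in the ambient space. For injectivity, suppose $\iota(K_1)$ and $\iota(K_2)$ are rail isotopic in $\mathcal{A}\times I$; then Theorem~\ref{isopST} produces Reidemeister equivalent annular diagrams from them, and since these diagrams lie inside the embedded sub-disc they can be viewed as planar diagrams on which the injectivity part of Proposition~\ref{discannulus} forces the same equivalence to hold as planar multi-knotoid diagrams. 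Applying the planar analogue of Theorem~\ref{isopST} then yields that $K_1$ and $K_2$ are rail isotopic in $D^2\times I$, as required.

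For the surjection $\rho$, I would define it by the composition: project a representative of a class in $\mathcal{K}(\mathcal{A}\times I)$ to an annular diagram, apply the diagram-level surjection of Proposition~\ref{discannulus} to obtain a planar diagram, and lift the result to $D^2\times I$. Well-definedness is a chase through Theorem~\ref{isopST}, Proposition~\ref{discannulus}, and the planar lifting theorem: rail isotopic lifts in $\mathcal{A}\times I$ project to Reidemeister equivalent annular diagrams, which in turn map to Reidemeister equivalent planar diagrams, which finally lift to rail isotopic spatial multi-knotoids in $D^2\times I$. Surjectivity is immediate, because any spatial multi-knotoid in $D^2\times I$ arises as the lift of a planar diagram, which can be viewed as an annular diagram via $\iota$ and thus as the image under $\rho$ of the corresponding multi-knotoid in $\mathcal{A}\times I$.

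The main obstacle will be handling the rails consistently throughout. The isotopy realizing $\iota(K_1)\sim\iota(K_2)$ in $\mathcal{A}\times I$ a priori roams through the essential part of the thickened annulus and cannot be restricted back to the embedded $D^2\times I$, so the injectivity cannot be established purely three-dimensionally; it is precisely the injectivity of the diagrammatic map $\iota$ in Proposition~\ref{discannulus} that does the real work. For the surjection, one must also verify that rail shifts in $\mathcal{A}\times I$ are sent to rail shifts in $D^2\times I$, but this is transparent since the two rails are parallel segments piercing a sub-disc of the annulus and extend unchanged when the hole is filled in.
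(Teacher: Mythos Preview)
Your proposal is correct and follows essentially the same approach as the paper: the paper simply states that the proposition follows by combining Theorem~\ref{isopST} with Proposition~\ref{discannulus}, and your argument spells out precisely this combination (project via Theorem~\ref{isopST}, apply the diagrammatic injection/surjection of Proposition~\ref{discannulus}, and lift back using the planar analogue). Your final paragraph correctly identifies why the three-dimensional isotopy alone does not suffice and why the diagrammatic injectivity is what carries the weight.
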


\subsection{Representation via mixed planar knotoids}

In this subsection we represent  multi-knotoids in $\mathcal{A} \times I$ as spatial mixed multi-knotoids and annular multi-knotoids as mixed multi-knotoids  in the plane. In \cite{LR1} it is established that isotopy classes of (oriented) knots/links in a knot/link complement correspond bijectively to isotopy classes of mixed links in the three-sphere $S^3$ or in three-space $\mathbb{R}^3$, through isotopies that preserve a fixed sublink which represents the knot/link complement.  In particular,  $\mathcal{A} \times I$ is the complement of a solid torus in $S^3$. The complement solid torus can be represented unambiguously by the pointwise fixed  standard unknot ${\rm O}$. For the purposes of this section it is convenient to conceptualize $\mathcal{A}$ as a punctured disc. So ${\rm O}$ pierces the center of the thickened disc $D^2 \times I$ transversely (but not perpendicularly for our purposes). Then an (oriented) link in $\mathcal{A} \times I$ is represented unambiguously by a mixed link in $S^3$ that contains  ${\rm O}$ as a  fixed sublink.  Cf. \cite{La2} and related works referred therein. This approach  was also used in \cite{DLM-pseudo} for representing annular pseudo knots as planar pseudo knots.

For translating the theory  of annular multi-knotoids (definition and equivalence) in the mixed multi-knotoid setting we consider the lifts. A multi-knotoid $K$ in $\mathcal{A}$ is lifted in $\mathcal{A} \times I$ by means of the two rails, see Figure~\ref{an}. Then the lifted  $K$ is represented  unabiguously by the ${\rm O}$-mixed spatial multi-knotoid ${\rm O} \cup K$, as shown in Figure~\ref{mixedannularlift}, where the two rails are now extended as in spatial multi-knotoids. 

\begin{figure}[H]
\begin{center}
\includegraphics[width=3.5in]{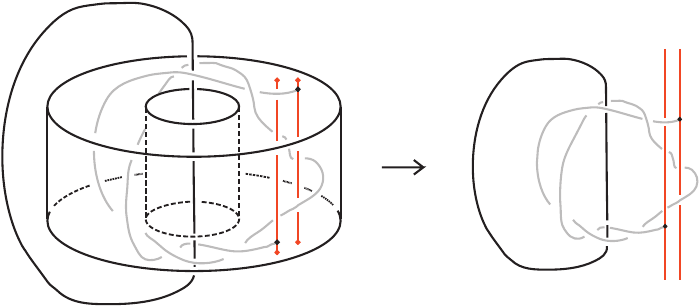}
\end{center}
\caption{The lift of an annular multi-knotoid in the thickened annulus with the complementary solid torus and its representation as an ${\rm O}$-mixed spatial multi-knotoid.}
\label{mixedannularlift}
\end{figure}

Using now the spatial lift of a multi-knotoid (recall Subsection~\ref{sec:planar-lift}), we define:

\begin{definition}\label{def:O-mixed}
A {\it spatial ${\rm O}$-mixed multi-knotoid} is a spatial multi-knotoid ${\rm O} \cup K$  which consists of the standard unknot ${\rm O}$ as a pointwise fixed part, and the spatial multi-knotoid $K$ (including its two rails) resulting by removing ${\rm O}$, as the \textit{moving part} or {\it $K$-part} of the mixed multi-knotoid. For an example view Figure~\ref{mixedannularlift}; compare with left-hand side of Figure~\ref{mknotoid}. Specifying orientations on the closed components of $K$ results in an \textit{oriented spatial ${\rm O}$-mixed multi-knotoid}. 

\noindent Furher, two spatial ${\rm O}$-mixed multi-knotoids are {\it rail isotopic} if they are related via a finite sequence of isotopies in the complement of the two rails, {\it keeping the ${\rm O}$-part pointwise fixed}, together with sliding moves of the endpoints along the rails (see Figure~\ref{railsliding}) and rail shifts. The rail isotopy class of a spatial ${\rm O}$-mixed multi-knotoid shall be also referred to as {\it spatial ${\rm O}$-mixed multi-knotoid}. 
\end{definition}

\begin{figure}[H]
\begin{center}
\includegraphics[width=5.5in]{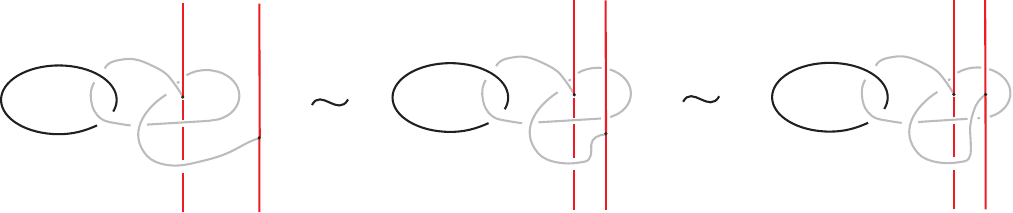}
\end{center}
\caption{A rail shift and a sliding
move of an endpoint along its rail.}
\label{railsliding}
\end{figure}

One can conceptualize a spatial ${\rm O}$-mixed multi-knotoid ${\rm O} \cup K$ as a {\it dichromatic multi-knotoid}, where the ${\rm O}$-part and the $K$-part, except for the rails, are distinguished by color or another identifying characteristic. By analogous arguments as for the theory of mixed links and links in the solid torus \cite{LR1, La2}, we prove that a multi-knotoid $K$ in the thickened annulus is represented unambiguously by a spatial ${\rm O}$-mixed multi-knotoid ${\rm O} \cup K$. Namely:

\begin{theorem}\label{isopmixed}
Rail isotopy classes of multi-knotoids in the thickened annulus are in bijective correspondence with rail isotopy classes of spatial ${\rm O}$-mixed multi-knotoids.
\end{theorem}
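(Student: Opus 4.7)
The plan is to adapt the classical representation of links in a 3-manifold via mixed links in $S^3$, developed in \cite{LR1, La2}, to the open-ended setting of multi-knotoids, using the rail-isotopy framework of \cite{GK} and Theorem~\ref{isopST} above. The guiding observation is that $\mathcal{A} \times I$ is homeomorphic to a solid torus, whose complement in $S^3$ is another solid torus with core an unknot ${\rm O}$; thus $\mathcal{A} \times I$ embeds in $S^3$ as $S^{3} \setminus N({\rm O})$, uniquely up to ambient isotopy.

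The forward map will send a multi-knotoid $K$ in $\mathcal{A} \times I$, with its two rails $L, H$ piercing $\mathcal{A}$, to the spatial multi-knotoid obtained by this embedding, with ${\rm O}$ appended as the fixed sublink and the segments $L, H$ extended to spatial rails in $S^3$. The backward map will be the obvious restriction: given a spatial ${\rm O}$-mixed multi-knotoid ${\rm O} \cup K$, after a small isotopy so that $K$ and its rails lie in the complement of a thin tubular neighborhood of ${\rm O}$, I would transport everything through the homeomorphism $S^3 \setminus N({\rm O}) \cong \mathcal{A} \times I$. On representatives, the two maps will be mutual inverses once the tubular neighborhood is fixed.

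To descend to equivalence classes I would argue in both directions. A rail isotopy of $K$ in $\mathcal{A} \times I$ is supported in the interior of $\mathcal{A}\times I$ away from $L, H$ and, under the embedding, takes place in $S^3 \setminus ({\rm O} \cup L \cup H)$; it therefore keeps ${\rm O}$ pointwise fixed and yields a rail isotopy of ${\rm O} \cup K$ as a spatial ${\rm O}$-mixed multi-knotoid. Rail shifts and endpoint sliding moves translate verbatim since the rails themselves are preserved by the identification. Conversely, a rail isotopy of ${\rm O} \cup K$ fixing ${\rm O}$ pointwise takes place in $S^3 \setminus {\rm O}$ and, by a general-position argument, can be pushed off a sufficiently thin tubular neighborhood of ${\rm O}$, yielding a rail isotopy in $\mathcal{A} \times I$.

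The main obstacle will be handling the ambiguity in the embedding $\mathcal{A} \times I \hookrightarrow S^3$: two choices differ by an ambient isotopy of $S^3$, and one must show that this isotopy can be arranged to preserve ${\rm O}$ pointwise and to respect the rail data. This will reduce to the classical statement that any two complementary unknots in $S^3$ with matched framings are ambient isotopic through motions that can be made to avoid the rails and the $K$-part. Once this is established, combining the two maps will yield the asserted bijection, in complete analogy with the argument for closed links in \cite{LR1, La2}, augmented by the rail-based considerations of \cite{GK} and the proof of Theorem~\ref{isopST} needed to handle the open endpoints.
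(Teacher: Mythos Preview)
Your proposal is correct and follows essentially the same approach as the paper: represent $\mathcal{A}\times I$ as the complement of an unknot ${\rm O}$ in $S^3$, append ${\rm O}$ as the fixed part, and observe that rail isotopies in $\mathcal{A}\times I$ correspond bijectively to rail isotopies of the resulting spatial ${\rm O}$-mixed multi-knotoid that fix ${\rm O}$ pointwise, citing \cite{LR1, La2} and the rail framework of \cite{GK}. The paper's own proof is in fact considerably more terse than your outline---it simply states that the correspondence follows directly from Definitions~\ref{solidlift} and~\ref{solidliftisotopy} and that rail isotopies translate bijectively---so your discussion of the embedding ambiguity and the general-position push-off is additional care rather than a departure in method.
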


\begin{proof}
The correspondence follows directly from the definition of the lift of an annular multi-knotoid diagram in the thickened annulus $\mathcal{A} \times I$ (Definition~\ref{solidlift}) and the definition of rail isotopy of knotoids in $\mathcal{A} \times I$ (Definition~\ref{solidliftisotopy}).
In order to view a multi-knotoid in the thickened annulus as a spatial ${\rm O}$-mixed multi-knotoid, the oriented fixed  ${\rm O}$ component is introduced,  which encodes the topology of the thickened annulus. Rail isotopies of the multi-knotoid in the thickened annulus translate then bijectively to rail isotopies of the spatial ${\rm O}$-mixed multi-knotoid that fix ${\rm O}$ pointwise.
\end{proof}

We shall now take a diagrammatic approach. We first point out that diagrammatic classical knot theory (as well as pseudo knot theory) in the plane (or in the disc) is equivalent to the one in the 2-sphere. However, in the case of annular multi-knotoids, they have to be represented by planar multi-knotoids and not by spherical ones,  because of the natural inclusion of the annulus in a disc and the implication of this fact on the diagrammatic moves. Moreover, the theory of spherical knotoids injects faithfully in the theory of planar knotoids (as classical knot theory injects, in turn, faithfully in the theory of spherical knotoids \cite{T}).  

\begin{figure}[H]
\begin{center}
\includegraphics[width=3.5in]{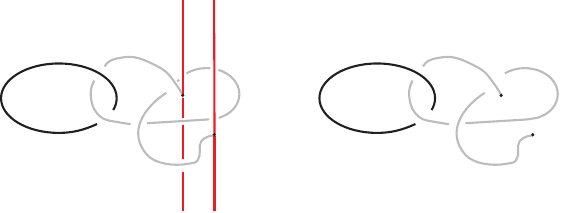}
\end{center}
\caption{An ${\rm O}$-mixed multi-knotoid and its diagram.}
\label{mknotoid}
\end{figure}

We project a spatial ${\rm O}$-mixed multi-knotoid ${\rm O} \cup K$ on the surface  $\mathcal{A} \times \{0\}$ or the once punctured disc, such that the ${\rm O}$-part is also represented as it is (recall that the piercing of the disc is not perpendicular). See Figures~\ref{mknotoid} and~\ref{annular-Omixed}.   Figure~\ref{annular-Omixed} shows the transition among the three different topological settings for annular multi-knotoids. In this context, two annular multi-knotoid diagrams are equivalent if they differ by a finite sequence of punctured disc isotopies and the  three standard Reidemeister moves for multi-knotoids.

\begin{figure}[H]
\begin{center}
\includegraphics[width=5in]{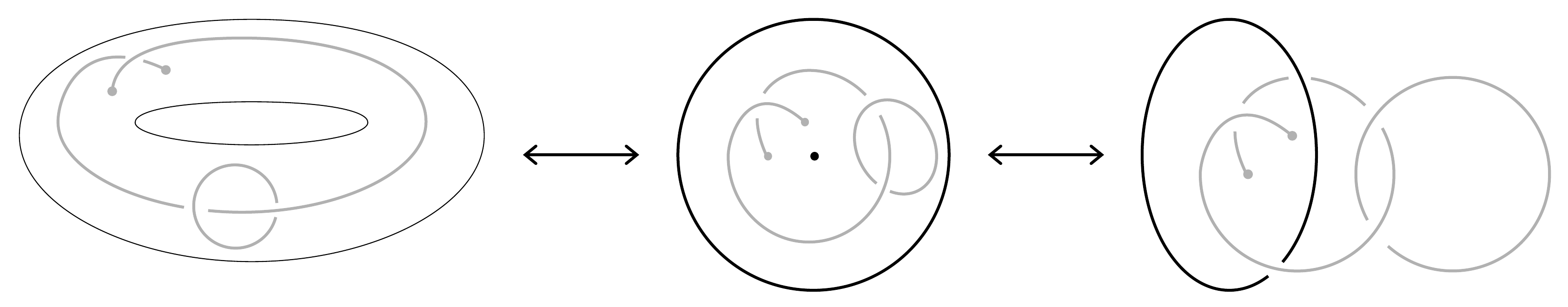}
\end{center}
\caption{From left to right: the transition from the annulus $\mathcal{A}$,  to the ${\rm O}$-mixed setting, to the punctured disc  representation. }
\label{annular-Omixed}
\end{figure}

\begin{definition}\label{def:O-mixed-diagram}
An ${\rm O}$-\textit{mixed multi-knotoid diagram} is a regular projection ${\rm O}\cup D$ of a spatial ${\rm O}$-mixed multi-knotoid ${\rm O}\cup K$ on the plane of ${\rm O}$, which is considered perpendicular to the rails. The double points are crossings, which are either crossings of arcs of the moving part or \textit{mixed crossings} between arcs of the moving and the fixed part, and are endowed with over/under information.  For an example view right-hand side of Figure~\ref{mknotoid}.  
\end{definition}
 
For establishing  a diagrammatic equivalence, we first make the following observation: the endpoints of an annular multi-knotoid, which is represented as an ${\rm O}$-mixed multi-knotoid, are permitted to pass over or under the arcs of the ${\rm O}$-part, contrasting with the restrictions imposed in standard knotoid theory (recall the forbidden moves illustrated in Figure~\ref{forbidoid}). Figure~\ref{almo} illustrates this interaction between the fixed and moving parts of an ${\rm O}$-mixed multi-knotoid diagram. 
We shall call these allowed moves {\it endpoint moves}. The endpoint moves arise naturally from the topology of the setting, since ${\rm O}$ represents the complementary solid torus. 

\begin{figure}[H]
\begin{center}
\includegraphics[width=3.9in]{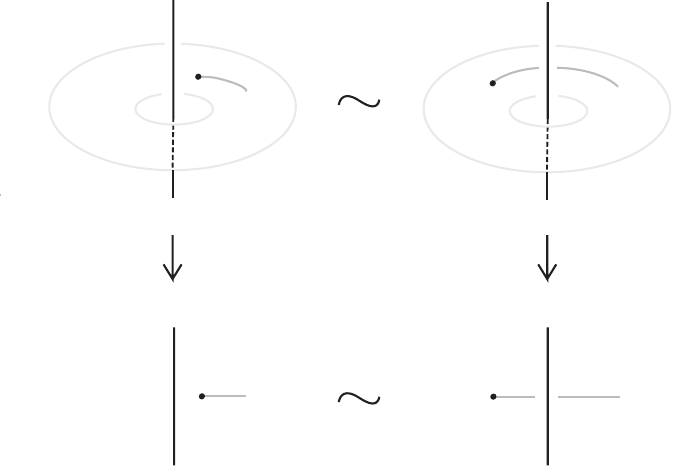}
\end{center}
\caption{The endpoint moves in the ${\rm O}$-mixed setting.}
\label{almo}
\end{figure}

\begin{definition}\label{def:O-mixed-diagram-equiv}
Two (oriented)  ${\rm O}$-mixed multi-knotoid diagrams are said to be  {\it ${\rm O}$-mixed equivalent} if they differ by planar isotopies (which include the endpoint swing moves), a finite sequence of the classical Reidemeister moves (recall Figure~\ref{rmoves}) for their moving parts,  moves that involve the fixed and the moving parts, called {\it mixed Reidemeister moves}, as exemplified in Figure~\ref{mreidm}, and the endpoint moves (exemplified in bottom row of Figure~\ref{almo}). An equivalence class of an ${\rm O}$-mixed multi-knotoid diagram shall be  referred to as {\it ${\rm O}$-mixed multi-knotoid}. 
\end{definition}

\begin{figure}[H]
\begin{center}
\includegraphics[width=2.8in]{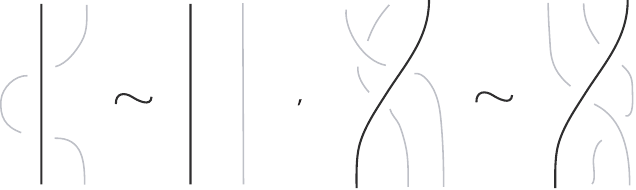}
\end{center}
\caption{The ${\rm O}$-mixed Reidemeister moves.}
\label{mreidm}
\end{figure}

\begin{remark}
 Note that ${\rm O}$-mixed equivalence of ${\rm O}$-mixed multi-knotoid diagrams differs from usual equivalence of multi-knotoid diagrams by the endpoint moves, which remain forbidded in the general theory of multi-knotoids.
\end{remark}

Combining the equivalence of planar multi-knotoid diagrams (recall Section~\ref{sec:sphere-plane}) and the theory of mixed links (\cite{LR1, La2}) we obtain the discrete diagrammatic equivalence of spatial ${\rm O}$-mixed multi-knotoids:

\begin{theorem} \label{Omixedreid}
[The ${\rm O}$-mixed Reidemeister equivalence]\label{O-reidplink}
Two (oriented) spatial ${\rm O}$-mixed multi-knotoids are rail isotopic if and only if any two (oriented) ${\rm O}$-mixed multi-knotoid diagrams of theirs are ${\rm O}$-mixed equivalent.
\end{theorem}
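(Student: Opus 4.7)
The plan is to assemble the statement as a composition of three bijections: (1) spatial ${\rm O}$-mixed multi-knotoids up to rail isotopy $\leftrightarrow$ multi-knotoids in the thickened annulus $\mathcal{A} \times I$ up to rail isotopy, which is precisely Theorem~\ref{isopmixed}; (2) the latter $\leftrightarrow$ annular multi-knotoid diagrams up to Reidemeister equivalence, which is Theorem~\ref{isopST}; and (3) annular Reidemeister equivalence of an annular diagram $D \subset \mathcal{A}$ $\leftrightarrow$ ${\rm O}$-mixed equivalence of the associated planar diagram ${\rm O} \cup D$ obtained by representing $\mathcal{A}$ as a once punctured disc with puncture unknotted by ${\rm O}$. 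The first two bijections are already available; what remains is to establish (3) and to check that the identification glues consistently with the diagrammatic setup of Definition~\ref{def:O-mixed-diagram}.

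For the forward direction, given a rail isotopy between two spatial ${\rm O}$-mixed multi-knotoids, I would work in the PL category and project generically onto the plane of ${\rm O}$ (perpendicular to the rails). By a standard general-position argument, the isotopy can be decomposed into finitely many elementary $\Delta$-moves, each occurring in a small $3$-ball disjoint from the rails and the ${\rm O}$-component (since ${\rm O}$ is point-wise fixed). Projecting these elementary moves, and separating those whose $3$-ball meets only the moving part from those whose $3$-ball meets both ${\rm O}$ and the moving part, yields, respectively, the classical Reidemeister moves of Figure~\ref{rmoves} and the mixed Reidemeister moves of Figure~\ref{mreidm}, exactly as in the mixed link setting of \cite{LR1}. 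Endpoint sliding along a rail projects to a swing or to no visible change, and a segment of rail sliding through the annular hole, i.e.\ through the disc bounded by ${\rm O}$, projects diagrammatically to one of the endpoint moves in Figure~\ref{almo}. Thus the projected moves together generate the ${\rm O}$-mixed equivalence of Definition~\ref{def:O-mixed-diagram-equiv}.

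For the converse direction, each generator of the ${\rm O}$-mixed equivalence lifts to an explicit rail isotopy in $3$-space: planar isotopy and the three classical Reidemeister moves lift as in \cite{GK} since they occur in a small disc of the projection plane away from ${\rm O}$ and the rails; the mixed Reidemeister moves of Figure~\ref{mreidm} lift by the standard argument for mixed links in \cite{LR1,La2} after replacing the unknotted fixed component ${\rm O}$ by a small neighborhood and performing the isotopy in its complement; the swing and sliding moves lift directly to rail shifts and endpoint slides along the rails; finally, each endpoint move of Figure~\ref{almo} lifts to pushing the corresponding rail (with its endpoint of the moving curve) through the disc bounded by ${\rm O}$, which is a legitimate rail isotopy in the complement of ${\rm O}$ because the rails do not meet ${\rm O}$ and are allowed to undergo parallel shifts.

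The main obstacle is the justification of the endpoint moves of Figure~\ref{almo}: one must verify that pushing a rail through the spanning disc of ${\rm O}$ is realizable as a rail isotopy that never touches ${\rm O}$, and conversely that such a three-dimensional move does not project to any forbidden interaction between an endpoint and a nearby arc of the moving part. This is handled topologically by noting that ${\rm O}$ bounds a disc in the complement of the rails plus the moving part (since ${\rm O}$ is pointwise fixed and the complementary solid torus deformation-retracts to ${\rm O}$), so pushing a rail across this disc is a well-defined rail isotopy; conversely, the forbidden moves of Figure~\ref{forbidoid} only involve arcs of the moving part and thus remain forbidden in the mixed setting, consistent with Definition~\ref{def:O-mixed-diagram-equiv}. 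Composing the three bijections then yields the asserted equivalence.
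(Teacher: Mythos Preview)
Your direct $\Delta$-move argument in the second and third paragraphs is essentially the paper's approach: work PL, decompose the rail isotopy into elementary triangle moves on the moving part in the complement of the rails, project to the plane of ${\rm O}$, and read off planar isotopy, Reidemeister moves, and mixed Reidemeister moves, with rail slides and rail shifts handled separately. The paper's proof is shorter and leaves the endpoint moves of Figure~\ref{almo} implicit (it simply says rail shifts project to isotopies of the endpoints that do not intersect arcs of the moving part, hence never produce a forbidden move), whereas you try to spell this out.

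Two points to fix. First, the compositional plan in your opening paragraph is circular in the paper's logical order: bijection~(3), annular Reidemeister equivalence $\leftrightarrow$ ${\rm O}$-mixed equivalence, is exactly Theorem~\ref{annulartoOmixed}, which is stated \emph{after} the present theorem and is obtained by combining Theorems~\ref{isopST},~\ref{isopmixed} and~\ref{Omixedreid}. So you cannot invoke it here; drop that paragraph and keep the direct argument. Second, your justification of the endpoint moves contains a false claim: ``${\rm O}$ bounds a disc in the complement of the rails plus the moving part'' is not true in general, since the moving part may link ${\rm O}$ nontrivially (that is the whole content of an annular multi-knotoid having essential winding). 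Fortunately you do not need this. The endpoint move is realized simply by a rail shift in which the projection of the rail's foot crosses the projection of ${\rm O}$; since ${\rm O}$ pierces the thickened disc transversely but not perpendicularly, the vertical rail and the curve ${\rm O}$ are in general position and the shift never makes them intersect. Equivalently, in the thickened-annulus model the rail foot moves within $\mathcal{A}$ and no obstruction from ${\rm O}$ is present. Replace your disc-bounding sentence with this observation and the argument goes through.
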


\begin{proof}
We work in the piece-wise linear category. A rail isotopy of a spatial ${\rm O}$-mixed multi-knotoid, ${\rm O}\cup K$, as described in Definition~\ref{def:O-mixed}, is a sequence of $\Delta$-moves applied only on the moving part and taking place in the complement of the two rails, together with sliding moves of endpoints along the rails and rail shifts. A $\Delta$-move on the moving part projects on the ${\rm O}$-plane to a planar isotopy or a Reidemeister move or a mixed Reidemeister move, according to \cite[Theorem 2.1]{GK}. Moreover, a sliding move of an endpoint along its rail can be sufficiently localized, so as to be invisible on the diagrammatic level. Finally, parallel shifts of the rails up to isotopy is translated by isotopies of the endpoints in the plane which do not intersect the component arcs, and thus do not generate a forbidden move. The above ensure that rail isotopy classes of ${\rm O}$-mixed multi-knotoids correspond precisely to ${\rm O}$-mixed equivalence classes of  ${\rm O}$-mixed multi-knotoid diagrams, according to Definition~\ref{def:O-mixed-diagram}.
\end{proof}

\begin{remark}
Theorem~\ref{Omixedreid} also applies to the case of linkoids. The definitions and moves extend naturally to linkoids, and the same Reidemeister equivalence holds for their spatial ${\rm O}$-mixed representations.
\end{remark}

\begin{remark}
In terms of ${\rm O}$-mixed knotoids, the inclusion of the annulus in a disc  corresponds to omitting  the fixed part ${\rm O}$, so that we are left with a planar knotoid. 
\end{remark}

In the above so far we lifted annular knotoid diagrams in the thickened annulus and we related multi-knotoids in the thickened annulus to spatial ${\rm O}$-mixed multi-knotoids and, subsequently, to planar ${\rm O}$-mixed multi-knotoids. Then, Theorems~\ref{isopST},~\ref{isopmixed} and~\ref{Omixedreid} combine into the following diagrammatic equivalence of the two different settings.

\begin{theorem}\label{annulartoOmixed}
Two annular knotoid diagrams are Reidemeister equivalent if and only if any two corresponding planar ${\rm O}$-mixed multi-knotoid diagrams of theirs are ${\rm O}$-mixed Reidemeister equivalent.
\end{theorem}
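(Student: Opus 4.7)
The plan is to chain together the three bijective equivalences already established, namely Theorem~\ref{isopST}, Theorem~\ref{isopmixed}, and Theorem~\ref{Omixedreid}, and show that they compose coherently at the level of diagrams. So the proof will essentially be a transitivity argument along the following correspondence:
\[
\{\text{annular multi-knotoid diagrams}\}/\!\sim \ \longleftrightarrow\ \{\text{multi-knotoids in } \mathcal{A}\times I\}/\!\sim_{\text{rail}}\ \longleftrightarrow\ \{\text{spatial } {\rm O}\text{-mixed multi-knotoids}\}/\!\sim_{\text{rail}}\ \longleftrightarrow\ \{{\rm O}\text{-mixed multi-knotoid diagrams}\}/\!\sim_{{\rm O}\text{-mix}}.
\]

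First I would fix the direction from left to right. Given two annular multi-knotoid diagrams $D_1, D_2$ that are Reidemeister equivalent in $\mathcal{A}$, I would lift each to a multi-knotoid in $\mathcal{A}\times I$ in the sense of Definition~\ref{solidlift}, and invoke Theorem~\ref{isopST} to conclude that the two lifts are rail isotopic in the thickened annulus. Then I would insert the point-wise fixed unknot ${\rm O}$ representing the complementary solid torus of $\mathcal{A}\times I$ inside $S^3$, and apply Theorem~\ref{isopmixed} to transport the rail isotopy of the lifts into a rail isotopy of the corresponding spatial ${\rm O}$-mixed multi-knotoids. Finally, projecting onto the plane of ${\rm O}$ (perpendicular to the rails) yields ${\rm O}$-mixed diagrams in the sense of Definition~\ref{def:O-mixed-diagram}, and Theorem~\ref{Omixedreid} produces the ${\rm O}$-mixed Reidemeister equivalence between them.

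For the converse direction, I would run the same chain backwards. Starting from two ${\rm O}$-mixed multi-knotoid diagrams $M_1, M_2$ that are ${\rm O}$-mixed Reidemeister equivalent, Theorem~\ref{Omixedreid} provides a rail isotopy between the spatial ${\rm O}$-mixed multi-knotoids they represent. Removing the fixed ${\rm O}$ component and interpreting the moving part inside the complementary thickened annulus (again via Theorem~\ref{isopmixed}) yields a rail isotopy of the underlying multi-knotoids in $\mathcal{A}\times I$. Projecting onto $\mathcal{A}\times\{0\}$ and applying Theorem~\ref{isopST} then gives the desired Reidemeister equivalence of the annular multi-knotoid diagrams.

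The main obstacle I expect is a bookkeeping one rather than a substantive one: I need to verify that the three bijections of isotopy/equivalence classes are compatible with each other at the level of representatives, so that what is called a \emph{corresponding} ${\rm O}$-mixed diagram for a given annular diagram really is the image of the same spatial object under both projections. Concretely, I would remark that the endpoint moves for ${\rm O}$-mixed diagrams (Figure~\ref{almo}) precisely match the freedom an annular endpoint has to swing around the puncture of $\mathcal{A}$ under surface isotopy, and that the mixed Reidemeister moves of Figure~\ref{mreidm} are exactly what is needed to translate classical Reidemeister moves happening on either side of the puncture. Once this dictionary is spelled out, there is nothing more to prove, and the theorem follows by transitivity from Theorems~\ref{isopST}, \ref{isopmixed}, and \ref{Omixedreid}.
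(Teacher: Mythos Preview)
Your proposal is correct and follows exactly the paper's approach: the paper presents Theorem~\ref{annulartoOmixed} without a separate proof, stating only that Theorems~\ref{isopST}, \ref{isopmixed}, and \ref{Omixedreid} ``combine into'' the result, and you have simply spelled out this transitivity chain explicitly. One small inaccuracy in your closing dictionary remark: the endpoint moves of Figure~\ref{almo} do \emph{not} correspond to an annular endpoint ``swinging around the puncture'' (which is forbidden in $\mathcal{A}$); rather, they absorb the artificial crossings with ${\rm O}$ that the mixed-diagram projection introduces, and their legitimacy is already built into Theorem~\ref{Omixedreid}, so your main argument is unaffected.
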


 \begin{note} In \cite{D4} for the study of  knotoids in the solid torus,  a multi-knotoid is defined as an ${\rm O}$-mixed multi-knotoid.
\end{note}

\section{The (universal) bracket polynomial for annular knotoids}\label{sec:annular-bracket} 

In this section we construct analogues of the Kauffman bracket polynomial for classical knots and links for the setting of annular multi-knotoids. In \cite{T} Turaev defines a bracket polynomial for multi-knotoids in any oriented surface, where any loop, essential, null-homotopic or nesting the trivial knotoid, gets the same value $d$. We define bracket polynomials for multi-knotoids in $\mathcal{A}$ that distinguish these different classes of components, leading to a 4-variable bracket polynomial and its extension to infinitely many variables. 
 
Consider now an annular multi-knotoid diagram and apply to its crossings smoothings as presented in \cite{T} and as illustrated in  Figure~\ref{smoothing}. After smoothing all crossings we arrive at a {\it state diagram} which may contain a number $n$ of inner essential unknots, in the sense that they do not enclose the trivial knotoid, a number $k$ of null-homotopic unknots, a number $m$ of non-essential unknots enclosing the trivial knotoid, and a number $l$ of outer essential unknots (which enclose all the rest). The inner and outer essential unknots are separated by the (possibly nested) trivial knotoid, due to the forbidden moves. An abstract example is shown in the left-hand side of Figure~\ref{fig:annular-turaev}. This formation is what we call a {\it generic state} for an annular multi-knotoid diagram.

\begin{figure}[H]
\begin{center}
\includegraphics[width=5in]{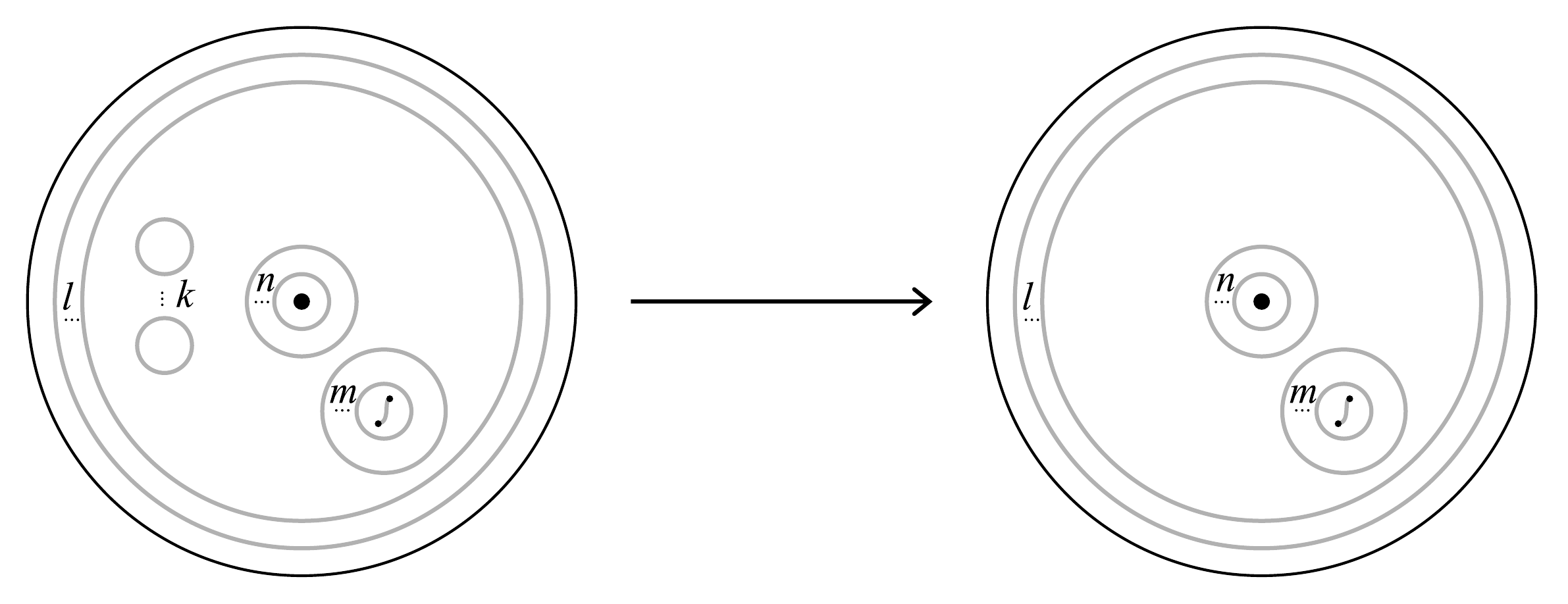}
\end{center}
\caption{A generic state for an annular multi-knotoid diagram before and after removing the null-homotopic components.}
\label{fig:annular-turaev}
\end{figure}

\newpage
\subsection{The annular bracket polynomial}

We are now ready to introduce:
 
\begin{definition}\label{pkaufbst}\rm
The {\it annular bracket polynomial} of an annular (multi)-knotoid diagram is defined inductively on the number of crossings in the diagram and the closed components in its states, by means of the following rules: 
\begin{itemize}
\item[i.] $\langle \ \raisebox{-2pt}{\includegraphics[scale=0.8]{4-1-1.pdf}} \ \rangle = A \langle \ \raisebox{0pt}{\includegraphics[scale=0.5]{skein-2.pdf}} \ \rangle + A^{-1} \langle \ \raisebox{-2.5pt}{\includegraphics[scale=0.6]{skein-3.pdf}} \ \rangle$
   \vspace{.1cm}
\item[ii.]  $\langle \ \raisebox{-2pt}{\includegraphics[scale=0.85]{4-1-2.pdf}} \ \rangle = A^{-1} \langle \ \raisebox{0pt}{\includegraphics[scale=0.5]{skein-2.pdf}} \ \rangle + A \langle \ \raisebox{-2.5pt}{\includegraphics[scale=0.6]{skein-3.pdf}} \ \rangle $
  \vspace{.1cm}
 \item[iii.]$\langle L \sqcup \, \mathrm{O}^k \rangle  = d^k \, \langle L \rangle$, where $d = -A^2-A^{-2}$ and $\mathrm{O}^k$ stands for $k \in \mathbb{N}\cup \{0\}$ null-homotopic unknots.
       
   \vspace{.1cm}
     \item[iv.] $\langle  \raisebox{-9pt}{\includegraphics[scale=0.05]{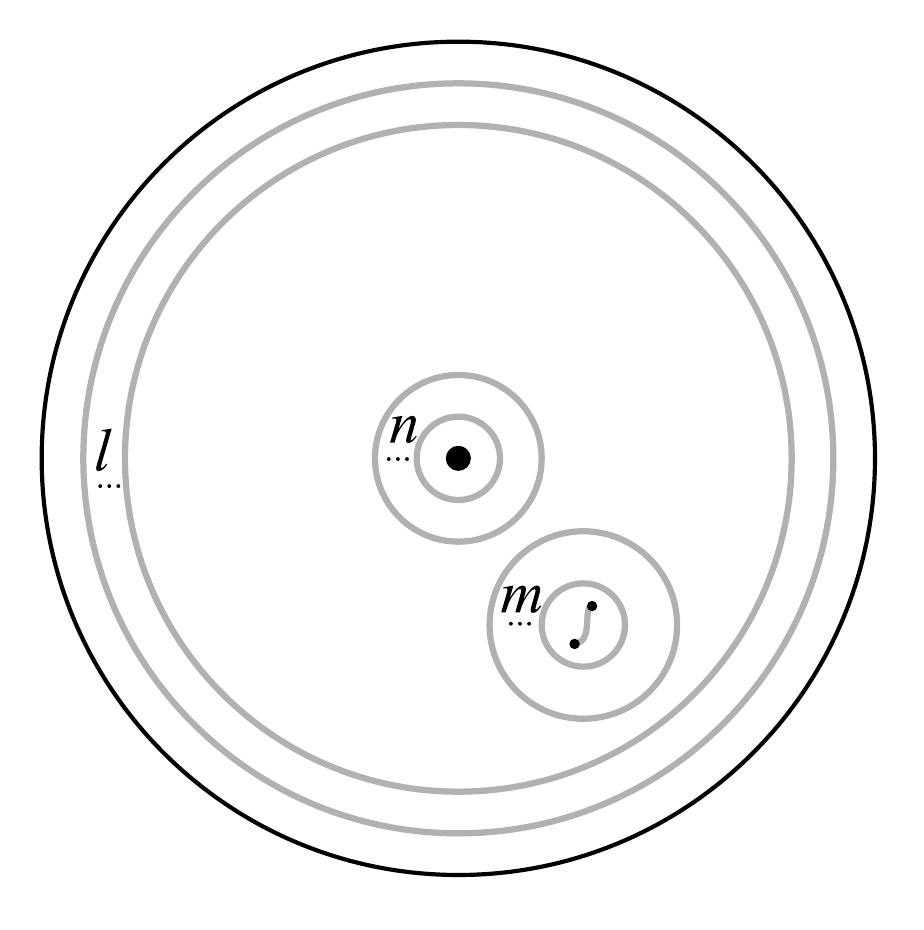}} \rangle  = x^n v^m t^l \langle \raisebox{-1pt}{\includegraphics[scale=0.1]{arc.pdf}} \rangle$ ,  where $n, m, l \in \mathbb{N}\cup \{0\}$,  $n$ is the number of inner essential unknots, $m$ is the number of non-essential unknots enclosing the trivial knotoid and $l$ is the number of outer essential unknots  (see right-hand side of  Figure~\ref{fig:annular-turaev} for an enlarged picture).
   %  \vspace{.2cm}
     \item[v.] $\langle \raisebox{-1pt}{\includegraphics[scale=0.1]{arc.pdf}} \rangle = 1$.
\end{itemize}
\end{definition}

Setting $x=v=t=d$ the annular bracket polynomial boils down to the  Turaev bracket for surface $\Sigma = \mathcal{A}$~\cite{T}.

\begin{theorem}\label{th:pkaufbst}
The annular bracket polynomial of an annular (multi)-knotoid diagram $K$ is a well defined 4-variable Laurent polynomial $ \langle K \rangle  \in \mathbb{Z}[A^{\pm 1}, x, v, t]$. Moreover, the annular bracket polynomial is a regular isotopy invariant, that is, invariant under Reidemeister moves R2 and R3.
\end{theorem}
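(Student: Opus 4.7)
The plan is to follow the template established in the proof of Theorem~\ref{th:universal-bracket-regular-isotopy} for the planar case, adapted to the richer topological structure of the annulus. First, I would establish well-definedness through the state summation perspective: rules (i) and (ii) reduce each crossing to a pair of terms carrying coefficients $A^{\pm 1}$, so iterated application produces a formal $\mathbb{Z}[A^{\pm 1}]$-linear combination of crossingless \emph{states}, each weighted by $A^{\sigma_s}$ where $\sigma_s$ is the signed count of $A$- versus $B$-smoothings used to reach state $s$. Standard commutativity arguments for Kauffman-type skein relations show that $\sigma_s$ depends only on $s$ and not on the order of smoothings.

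Next I would classify the components of each state using the topology of the annulus. A state consists of the trivial knotoid arc (possibly nested) together with finitely many disjoint simple closed curves. Each such closed curve is either null-homotopic or essential. The trivial knotoid arc, together with the sequence of unknots nesting it, partitions the annulus into regions separating the two boundary circles, so an essential unknot is unambiguously classified as \emph{inner} or \emph{outer} according to which side of the trivial knotoid it lies on. The forbidden moves at the endpoints prevent any arc from being isotoped across the trivial knotoid, so the counts $(k_s, n_s, m_s, l_s)$ of null-homotopic, inner essential, nesting, and outer essential unknots are each invariants of the state up to surface isotopy. Because rules (iii) and (iv) depend only on these counts and not on the order in which components are removed, the bracket admits the closed state summation
\begin{equation*}
\langle K\rangle\ =\ \sum_{s\in S(K)}\, A^{\sigma_s}\, d^{k_s}\, x^{n_s}\, v^{m_s}\, t^{l_s},
\end{equation*}
which is manifestly a well-defined element of $\mathbb{Z}[A^{\pm 1}, x, v, t]$.

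For regular isotopy invariance, the standard local computation for R2 goes through unchanged: expanding the two crossings via rules (i) and (ii) yields four terms, and the two terms involving one $A$- and one $B$-smoothing combine, via rule (iii) applied to the small null-homotopic loop created by one of them with loop value $d = -A^2 - A^{-2}$, to cancel the contribution of the other three, leaving precisely the bracket of the two-strand smoothing. Invariance under R3 then follows from R2 invariance by the classical triangle argument. Since R2 and R3 are confined to small disks, they affect only the counts of null-homotopic loops via rule (iii), and leave the classification of essential and nesting components (governed by rule (iv)) untouched. Surface isotopies, including endpoint swings, preserve the topological classification of state components as well, so the polynomial is also invariant under those moves.

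The main obstacle is the second step: confirming that the partition of closed components into the four topological types is genuinely canonical and stable under the allowed equivalences. The critical ingredients are the planar/annular structure giving a well-defined notion of inner versus outer region relative to the trivial knotoid, and the forbidden-move constraint ensuring that nesting and inner/outer status cannot be changed by pushing arcs past the endpoints. Once this classification is secured, the proof reduces to the local Kauffman-bracket calculations, which proceed exactly as in the planar case.
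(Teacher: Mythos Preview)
Your proposal is correct and follows essentially the same approach as the paper: reduce to states via rules (i)--(ii), argue that the four component counts $(k_s,n_s,m_s,l_s)$ are locked by the topology of the annulus and the forbidden moves, and then invoke the standard local Kauffman computations for R2 and R3. The only minor difference is that the paper appeals to the inclusion of the annulus in a disc to inherit order-independence of smoothings from the planar bracket, whereas you argue commutativity directly; both are routine and yield the same conclusion.
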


\begin{proof}
Regarding first the well-definiteness: In analogy to the planar case, we can compute the annular bracket $\langle K \rangle$ of an annular multi-knotoid $K$ by applying the rules in Definition~\ref{pkaufbst}. We first  smoothen all crossings by rules (i) and (ii).  Considering the inclusion of the annulus in a disc, we  use the fact that the planar bracket is well-defined on planar (multi)-knotoid diagrams, that is, it is independent of the order of smoothings of the crossings. 

We then eliminate in each state all trivial unknots that are null-homotopic in $\mathcal{A}$, using rule (iii). Note that a null-homotopic unknot in $\mathcal{A}$ may lie in between the essential unknots, even between the non-essential unknots enclosing the trivial knotoid. Yet, its position is invisible when applying rule (iii). For this reason, in a generic state all $k$ null-homotopic unknots can be assumed to lie within a local disc in the same region of the diagram.  
 
We end up with generic state diagrams consisting of a number of non-essential unknots enclosing the trivial knotoid and a number of essential unknots, some  enclosing the trivial knotoid and some not, as in the right-hand side of  Figure~\ref{fig:annular-turaev}. In this reduced state diagram, we are now in a position to apply rule (iv). Note that the  relative positions of the unknots in each one of the above three sets  are interchangeable, since this is not visible by rule (iv), which only counts their numbers. At this point we observe that the status of each set of unknots in a generic state, like in Figure~\ref{fig:annular-turaev}, is  locked because of the forbidden moves and the topology of the annulus, so the integers $k,l,m,n$ are fixed in every state. Therefore,  application of rules (iii) and (iv) is unambiguous. Finally rule (v) applies to the trivial knotoid. Hence, the annular bracket polynomial is well-defined on any given (multi)-knotoid diagram by the rules (i)--(v).

Moreover, considering again the inclusion of the annulus in a disc, implies that the invariance of the annular bracket polynomial under the  order of smoothings of  crossings rests on the invariance of the planar bracket polynomial under this order, since the underlying closed curves in the states will be identical and coefficients carry through.

Finally, for  the invariance  of the annular bracket polynomial under the Reidemeister moves R2 we check that the same arguments as for the classical bracket polynomial carry through for any arcs involved in the moves, while the Reidemeister moves R3 follow from the R2 moves. Hence, the annular bracket polynomial is a regular isotopy invariant, that is, invariant under Reidemeister moves R2 and R3.
\end{proof}

\subsection{The universal annular bracket polynomial} 

In analogy to the planar case, rule (iv) can be generalized to an infinite variable expression:

\begin{definition}\label{annularskein}
The {\it universal annular bracket polynomial} for annular multi-knotoids, denoted $ \langle \cdot \rangle_U $, is defined by means of rules (i), (ii), (iii), (v) of Definition~\ref{pkaufbst} and rule (iv$^{\prime}$) below:
\begin{center}
iv$^\prime$. \qquad $\langle  \raisebox{-9pt}{\includegraphics[scale=0.05]{punctured-an-turaev.pdf}} \rangle_U   = x_n v_m t_l \langle \raisebox{-1pt}{\includegraphics[scale=0.1]{arc.pdf}} \rangle_U$
\end{center}
 where we alternatively  substitute $x^n, v^m, t^l$ by $x_n, v_m, t_l$ respectively, with $n, m, l$ being non-negative integers. 
\end{definition}

Definition~\ref{annularskein} leads to the following result.

\begin{theorem} \label{thm:annularskein}
The  universal annular bracket polynomial  is an infinite variable Laurent polynomial $ \langle \cdot \rangle_U  \in \mathbb{Z}\left[A^{\pm 1}, x_n, v_m, t_l,\, \ n, m, l \in \mathbb{N}\cup \{0\}\right]$, that is a regular isotopy invariant for annular multi-knotoids, and  realizes the Kauffman bracket skein module of annular multi-knotoids.
\end{theorem}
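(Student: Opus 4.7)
The plan is to mirror the strategy used for Theorem~\ref{th:pkaufbst} and Theorem~\ref{th:universal-bracket-regular-isotopy}, since the only change from the finite-variable annular bracket to its universal version is the replacement of the monomial $x^n v^m t^l$ by an independent variable $x_n v_m t_l$ in rule (iv$^{\prime}$), and this replacement affects nothing in the reduction procedure that precedes the application of rule (iv). First, I would verify well-definedness of $\langle K \rangle_U$ on any annular multi-knotoid diagram $K$. Smoothings use rules (i)–(ii) only, which are identical to those for the planar bracket; invoking the inclusion $\iota : \mathcal{K}(D^2) \hookrightarrow \mathcal{K}(\mathcal{A})$ from Proposition~\ref{discannulus}, the independence of $\langle K \rangle_U$ from the order in which crossings are smoothed follows exactly as in the proof of Theorem~\ref{th:pkaufbst}. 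Once all crossings are smoothed, every state decomposes as in Figure~\ref{fig:annular-turaev}: $k$ null-homotopic unknots, $n$ inner essential unknots, $m$ non-essential unknots nesting the trivial knotoid, and $l$ outer essential unknots. Rule (iii) disposes of the null-homotopic components and is order-insensitive; the forbidden moves and the topology of $\mathcal{A}$ lock the integers $n, m, l$ of each state so that rule (iv$^{\prime}$) applies unambiguously and yields a monomial in the independent variables $x_n, v_m, t_l$. Finally rule (v) normalizes the surviving trivial knotoid.

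For regular isotopy invariance, I would observe that Reidemeister moves R2 and R3 are carried out away from the endpoints and in particular do not alter the isotopy class of the generic state that remains after all smoothings and applications of rule (iii). Invariance is thus reduced to a crossing-level check that proceeds verbatim as in the classical Kauffman bracket argument and as already recorded in Theorem~\ref{th:pkaufbst}: the weighted sum of A- and B-smoothings telescopes, and the $x_n v_m t_l$ prefactors propagate identically on both sides of every R2 or R3 local picture since the essential/non-essential status of each component is unchanged by the move.

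To identify $\langle \cdot \rangle_U$ with an element realizing the Kauffman bracket skein module $\mathcal{S}(\mathcal{A} \times I)$ for annular multi-knotoids, I would argue as follows. A set of generators for $\mathcal{S}(\mathcal{A} \times I)$ relative to the Kauffman skein relations is given precisely by the isotopy classes of generic states depicted in Figure~\ref{fig:annular-turaev} (compare with the skein module descriptions in \cite{D4,GG}): these are enumerated, modulo null-homotopic components, by the triple $(n, m, l) \in (\mathbb{N} \cup \{0\})^3$. Assigning to each such generator the monomial $x_n v_m t_l$ defines a $\mathbb{Z}[A^{\pm 1}]$-linear bijection between the basis of $\mathcal{S}(\mathcal{A}\times I)$ and the monomial basis of $\mathbb{Z}[A^{\pm 1}, x_n, v_m, t_l]$, and by construction the universal annular bracket is the image of any annular multi-knotoid diagram under this bijection after applying the bracket skein relations (i)–(iii).

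The step I expect to be the main obstacle is the last one: one must be careful that the forbidden moves together with the topology of the annulus indeed force each pair of non-isotopic generic states to carry distinct $(n, m, l)$ labels (so no further linear relations collapse the free module), and that no hidden relation mixes the three sets of essential/non-essential unknots. This is where invoking Theorem~\ref{isopST} and Theorem~\ref{annulartoOmixed} is essential: the three types of unknots correspond to genuinely distinct rail-isotopy configurations in $\mathcal{A} \times I$ (inner core windings, null-homotopic loops, and outer boundary windings relative to the trivial knotoid lifted between the two rails), and no sequence of Reidemeister moves can transform one type into another without invoking a forbidden move.
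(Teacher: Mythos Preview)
Your proposal is correct and follows essentially the same approach as the paper, which proves the theorem by declaring it ``completely analogous to the proof of Theorem~\ref{th:pkaufbst}, in combination with the proof of Theorem~\ref{th:universal-bracket-regular-isotopy} and taking into account Definition~\ref{annularskein}.'' Your write-up is in fact more detailed than the paper's one-line proof, particularly in the final paragraph on the skein module realization, where the paper defers the freeness claim to Remark~\ref{KBSMannulus} and the references \cite{D4,GG} rather than arguing it directly.
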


\begin{proof}
The proof of the theorem is completely analogous to the proof of Theorem~\ref{th:pkaufbst}, in combination with the proof of Theorem~\ref{th:universal-bracket-regular-isotopy} and taking  into account Definition~\ref{annularskein}.    
\end{proof}

In Section~\ref{computing_bracket} we provide some computations of the annular and the universal annular bracket polynomials on specific examples.

\begin{remark}\label{KBSMannulus}
The above generalized definition for the annular bracket polynomial realizes the {\it Kauffman bracket skein module for annular multi-knotoids} \cite{D4,GG}, which generalizes the Kauffman bracket skein module (KBSM) of the solid torus \cite{HP} and is conceptually analogous  to the KBSM of the handlebody of genus 2 (see \cite{P}). In particular, the KBSM for annular multi-knotoids is freely generated by the state diagrams illustrated in the left-hand side of  Figure~\ref{fig:annular-turaev} and described in the text above it. 
\end{remark}

\begin{remark} 
If we have an annular multi-knotoid diagram that contains no essential closed curves, by using the inclusion of a disc in $\mathcal{A}$ (see left-hand illustration in Figure~\ref{planarannular}), such annular diagrams can be viewed as planar diagrams and resolve into states as in the planar case.
In contrast, using the inclusion of $\mathcal{A}$ in a disc (see right-hand illustration in Figure~\ref{planarannular}), the $n$ essential unknots become null-homotopic, so substituting $x^n$ (resp.  $x_n$) by $d^{n} = (-A^2-A^{-2})^{n}$, the annular (resp. universal annular) bracket polynomial specializes to the planar (resp. universal planar) bracket of Definition~\ref{pkaufb} (resp. Definition~\ref{u-planarskein}) for planar knotoids.  
\end{remark}

\begin{remark}
The notion of a {\it knot type annular multi-knotoid}, whereby the endpoints lie in the same diagrammatic region, is not well-defined as explained by the following example: the trivial knotoid can be local and correspond to a null-homotopic unknot. But it can also be isotoped to an  essential unknot with a segment removed. So, the corresponding knot type in this case would be the essential unknot. This implies that the under- or over-closure \cite{T} of an annular multi-knotoid is also not well defined. So, the evaluation of the (universal) annular bracket polynomial  of the corresponding under- or over-closure link in the thickened annulus is not well defined either.  %(after specializing $v$ to the loop value $d$ (resp. $v_m$ to $d^m$) and $t$ to $x$ (resp. $t_l$ to $x_l$)). 
\end{remark}

\subsection{State summation formuli for the annular bracket polynomials} 
 
In complete analogy to the classical bracket, the spherical and the (universal) planar bracket polynomials, one can arrive at an alternative definition of the annular bracket and the universal  annular bracket polynomials via a state summation: 

\begin{definition}
The annular bracket and the universal annular bracket polynomials of an annular multi-knotoid diagram $K$ are defined as state summations: 
\begin{equation*}\label{sskbst}
\langle K \rangle\ = \underset{s\in S(K)}{\sum}\, A^{\sigma_s}\,  d^{k_s}\, x^{n_s}\, v^{m_s} \, t^{l_s} \quad {\rm and} \quad \langle K \rangle_U\ = \underset{s\in S(K)}{\sum}\, A^{\sigma_s}\, d^{k_s}\, x_{n_s} \, v_{m_s} \, t_{l_s} 
\end{equation*} 
\noindent where $d= -A^2-A^{-2}$,  $\sigma_s$ is the number of A-smoothings minus the number of B-smoothings applied to the crossings of $K$ in order to obtain the state $s$ (recall Figure~\ref{smoothing}), $k_s$ is the number of null-homotopic unknots in $s$, $n_s$ is the number of inner essential unknots in $s$, $m_s$ is the number of non-esssential unknots in $s$ that enclose the trivial knotoid, and $l_s$ is the number of outer essential unknots, enclosing all the rest, in the state $s$.
\end{definition}

\subsection{The annular Jones polynomials}

Furthermore, as in the case of the (universal) planar bracket polynomial,  the (universal) annular bracket polynomial can be normalized so as to also respect Reidemeister move R1, by considering the product of $\langle K \rangle$ by the writhe of an annular multi-knotoid diagram $K$, which is defined as follows.

\begin{definition} \label{def:annular_writhe}
The \textit{writhe} of an oriented annular multi-knotoid $K$, denoted as $w(K)$, is defined as the number of positive crossings minus the number of negative crossings of $K$ (recall Figure~\ref{si}). 
\end{definition}

\noindent This leads to obtaining the {\it normalized (universal)  annular  bracket polynomials}.

\begin{theorem}\label{th:jones-an}
Let $K$ be an annular multi-knotoid diagram. The {\rm annular Jones} and the {\rm universal annular Jones polynomials} defined as: 
\[
V(K)\ =\ (-A^3)^{-w(K)}\, \langle K \rangle\ \ \ {\rm and}\ \ \ V_U(K)\ =\ (-A^3)^{-w(K)}\, \langle K \rangle_U
\]
\noindent where $w(K)$ is the writhe of $K$,  $A = t^{-1/4}$, $\langle K \rangle$ the annular bracket polynomial of $K$ and $\langle K \rangle_U$  the universal annular bracket polynomial of $K$, are isotopy invariants of annular knotoids.
\end{theorem}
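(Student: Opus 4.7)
The plan is to reduce the statement to the known regular isotopy invariance from Theorem~\ref{th:pkaufbst} and the standard behavior of the writhe under Reidemeister moves. Specifically, since a knotoid isotopy is generated by surface isotopy (including endpoint swings) together with the Reidemeister moves R1, R2, R3, it suffices to verify invariance of $V(K)$ and $V_U(K)$ under each of these generators separately.

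First I would handle R2 and R3. A direct inspection shows that the writhe $w(K)$ is unchanged under R2 (the two crossings created or destroyed are of opposite sign) and under R3 (signs are permuted but not altered), so the factor $(-A^3)^{-w(K)}$ is unaffected. Combined with the regular isotopy invariance of $\langle K \rangle$ and $\langle K \rangle_U$ from Theorem~\ref{th:pkaufbst}, this immediately yields invariance of $V(K)$ and $V_U(K)$ under R2 and R3. Surface isotopy, including the endpoint swing moves and any relocation of strands that does not cross an endpoint or a rail, does not introduce or remove crossings and is covered by the same regular isotopy invariance. Likewise, rail shifts and endpoint sliding in the three-dimensional lift leave both the crossing data and the sign data untouched.

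Next I would address R1. Since R1 is a local move performed away from the endpoints, the small loop created is always null-homotopic in $\mathcal{A}$, so the standard local computation using rules (i)--(iii) of Definition~\ref{pkaufbst} applies verbatim, giving
\[
\bigl\langle\, K \text{ with a positive curl}\, \bigr\rangle = (-A^3)\,\langle K\rangle, \qquad \bigl\langle\, K \text{ with a negative curl}\, \bigr\rangle = (-A^{-3})\,\langle K\rangle.
\]
A positive (resp. negative) curl contributes $+1$ (resp. $-1$) to $w(K)$, so the normalization factor $(-A^3)^{-w(K)}$ picks up exactly the compensating factor $(-A^3)^{\mp 1}$, cancelling the change in $\langle K\rangle$ and leaving $V(K)$ unchanged. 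The identical argument applies to $V_U(K)$, since rule (iv$^\prime$) of Definition~\ref{annularskein} is not triggered by the local null-homotopic loop introduced by R1.

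The main thing to be careful about, rather than an obstacle, is the geometric claim that R1 cannot produce a loop that is essential in $\mathcal{A}$ or one that encloses the trivial knotoid: this is guaranteed because R1 is a local move inside an arbitrarily small disc of $\mathcal{A}$, so the resulting loop lies in that disc and bounds a null-homotopic unknot separated from the other components of any state by the forbidden-move constraints already used in the proof of Theorem~\ref{th:pkaufbst}. Consequently only rule (iii) is invoked at the new loop, and the variables $x$, $v$, $t$ (or $x_n$, $v_m$, $t_l$) are never modified by R1. Combining the three cases gives invariance of $V(K)$ and $V_U(K)$ under the full Reidemeister equivalence of Definition~\ref{anpl}, completing the proof.
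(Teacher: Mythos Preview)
Your proof is correct and follows exactly the standard normalization argument that the paper relies on. In fact, the paper does not give an explicit proof of Theorem~\ref{th:jones-an}: it simply states that the bracket ``can be normalized so as to also respect Reidemeister move R1'' by the writhe factor, and then asserts the theorem. Your write-up supplies precisely the details the paper leaves implicit --- checking R2/R3 via regular isotopy invariance plus writhe invariance, and checking R1 via the local computation $\langle K_{\pm\text{curl}}\rangle=(-A^{\pm 3})\langle K\rangle$ together with the observation that the R1 loop is null-homotopic in $\mathcal{A}$ so only rule~(iii) applies.
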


\subsection{The ${\rm O}$-mixed bracket and Jones polynomials}

In view of Theorem~\ref{Omixedreid}, the annular and universal annular bracket polynomials can be adapted to the setting of (oriented) ${\rm O}$-mixed multi-knotoids. Here the mixed crossings are not subjected to the inductive rules, since ${\rm O}$ must remain pointwise fixed throughout. 
 More precisely, the (universal) annular bracket polynomials translate as follows:

\begin{definition}\label{mix-pkaufbst}\rm
Let ${\rm O}\cup K$ be an (oriented) ${\rm O}$-mixed multi-knotoid. The \textit{${\rm O}$-mixed bracket polynomial} of ${\rm O}\cup K$ is defined by means of the same inductive rules as the ones for the annular bracket polynomial (Definition~\ref{pkaufbst}), except for the diagram in  rule (iv) which is substituted by the planar diagram in Figure~\ref{fig:KaufOmixed}. Here, the $n$ `inner' as well as the $l$ `outer' essential unknots are linked with the fixed unknot~${\rm O}$. Moreover, $L$ in rule (iii) stands now for an (oriented) ${\rm O}$-mixed multi-knotoid ${\rm O}\cup L$.

\noindent Similarly, the \textit{universal ${\rm O}$-mixed bracket polynomial} of ${\rm O}\cup K$ is defined by means of the same inductive rules as the  ${\rm O}$-mixed bracket polynomial, except that the variables $x, v, t$ in rule (iv) are replaced by the infinitely many variables $x_n, v_m,  t_l$ for $n, m, l \in \mathbb{N}\cup \{0\}$.
\end{definition}

\begin{figure}[H]
    \centering
    \includegraphics[width=0.3\linewidth]{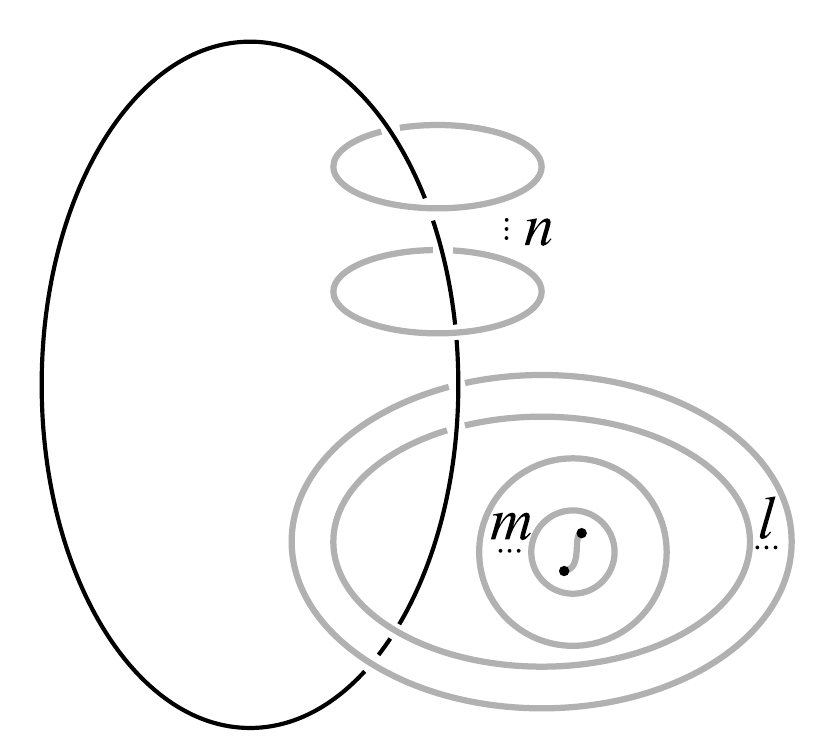}
    \caption{A generic state for an ${\rm O}$-mixed multi-knotoid diagram after removing the null-homotopic components.}
    \label{fig:KaufOmixed}
\end{figure}

\begin{remark} 
If we start from an ${\rm O}$-mixed multi-knotoid and do all smoothings of moving crossings, we would arrive at sets of closed essential curves winding around the fixed ${\rm O}$-component  some numbers of times, and not just once like the   $n$  and $l$ essential unknots depicted in Figure~\ref{fig:KaufOmixed}. This would lead to extended states (including the ones in Figure~\ref{fig:KaufOmixed}) and eventually to an alternative  definition of the ${\rm O}$-mixed bracket polynomial with extended initial conditions. The extended states have been used in the braid approach to the skein module of the solid torus. For details cf. \cites{La2,D2} and  references therein. This alternative definition is related to Definition~\ref{mix-pkaufbst} through a change of basis of the Kauffman bracket skein module of the solid torus. 
\end{remark}

We can now state the following:

\begin{theorem}\label{th:O-pkaufbst}
The ${\rm O}$-mixed bracket polynomial of an (oriented) ${\rm O}$-mixed multi-knotoid diagram $K \cup \, \mathrm{O}$ is a well-defined 4-variable Laurent polynomial $ \langle K \rangle  \in \mathbb{Z}[A^{\pm 1}, x, v, t]$, which is invariant for ${\rm O}$-mixed multi-knotoids under Reidemeister moves R2 and R3. Further, the universal ${\rm O}$-mixed bracket polynomial of $\mathrm{O} \cup \, K$ is a well-defined, infinite variable Laurent polynomial $ \langle \cdot \rangle  \in \mathbb{Z}\left[A^{\pm 1}, x_n, v_m, t_l,\, \ n, m, l \in \mathbb{N}\cup \{0\}\right]$, which is a regular isotopy invariant for ${\rm O}$-mixed multi-knotoids that realizes the Kauffman bracket skein module of ${\rm O}$-mixed multi-knotoids. 
\end{theorem}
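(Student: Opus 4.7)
The plan is to deduce Theorem~\ref{th:O-pkaufbst} directly from the already-established invariance of the (universal) annular bracket polynomial (Theorem~\ref{th:pkaufbst}) by transporting the computation through the bijective diagrammatic correspondence of Theorem~\ref{annulartoOmixed}. The guiding observation is that the inductive rules in Definition~\ref{mix-pkaufbst} mirror those of Definition~\ref{pkaufbst} under the transition from the annulus to the punctured disc depicted in Figure~\ref{annular-Omixed}: an inner (resp.\ outer) essential unknot in $\mathcal{A}$ corresponds to a planar unknot linking the fixed ${\rm O}$-component once from inside (resp.\ outside), a non-essential unknot enclosing the trivial knotoid corresponds to a planar unknot enclosing the knotoid segment but unlinked from ${\rm O}$, and a null-homotopic annular unknot remains null-homotopic and unlinked from ${\rm O}$ in the ${\rm O}$-mixed picture.

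First I would verify well-definedness. Since ${\rm O}$ must remain pointwise fixed, rules (i)--(ii) apply only to the crossings of the moving part; viewing the diagram inside a disc and invoking the order-independence of smoothings from the planar bracket computation, with the linking data of each resulting component with ${\rm O}$ carried along as a passive label, each state reduces to a disjoint union of four classes: null-homotopic unlinked unknots (counted by $k$), unknots linking ${\rm O}$ once that do not enclose the knotoid segment (contributing to $x^n$, resp.\ $x_n$), unknots not linking ${\rm O}$ that enclose the knotoid segment (contributing to $v^m$, resp.\ $v_m$), and unknots linking ${\rm O}$ once that enclose the knotoid together with all of the former (contributing to $t^l$, resp.\ $t_l$). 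The classical forbidden moves at the endpoints, together with the topology of the ${\rm O}$-complement, lock these four classes against one another, so the integers $k, n, m, l$ are unambiguously determined by each state, and rules (iii)--(v) apply without dependence on any ordering within each class.

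Next I would prove invariance under Reidemeister R2 and R3 performed on the moving part: since these take place in a 3-ball disjoint from ${\rm O}$, the standard Kauffman calculation carries through verbatim, exactly as in the proof of Theorem~\ref{th:pkaufbst}. The principal obstacle, and what I expect to be the subtle step, is handling the mixed Reidemeister moves (Figure~\ref{mreidm}) and the endpoint moves (Figure~\ref{almo}): a mixed R2 or R3 involves an ${\rm O}$-arc which the rules never smooth, so invariance cannot be shown by a local skein computation. Here I would appeal directly to Theorem~\ref{annulartoOmixed}: two ${\rm O}$-mixed diagrams related by a mixed R2/R3 or by an endpoint move correspond to annular diagrams related by annular surface isotopy and/or classical R2/R3, whence invariance of $\langle \cdot \rangle$ on the ${\rm O}$-mixed side reduces to the annular invariance already known. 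One must check that the state-by-state correspondence preserves the monomials $x^n v^m t^l$ (resp.\ $x_n v_m t_l$) term by term; this is where the knotoid forbidden moves are essential, as they guarantee that no state component migrates between the four classes under the transport.

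Finally, the universal case follows by replacing the triple $x^n, v^m, t^l$ by the infinite families $x_n, v_m, t_l$ throughout and invoking the universal annular invariance in place of the four-variable version. For the skein module statement, I would argue that the generic ${\rm O}$-mixed states of Figure~\ref{fig:KaufOmixed} form a free generating set: spanning holds because every ${\rm O}$-mixed multi-knotoid diagram reduces, via the skein rules, to a $\mathbb{Z}[A^{\pm 1}]$-linear combination of such generic states, while linear independence follows from the fact that distinct triples $(n, m, l)$ yield monomials in distinct universal variables $x_n v_m t_l$, in complete analogy with Remark~\ref{KBSMannulus}.
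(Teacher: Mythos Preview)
Your proposal is correct and follows essentially the same strategy as the paper: both derive well-definedness and invariance by combining the bijective correspondence of Theorem~\ref{annulartoOmixed} with the already-established annular/planar results. The one noteworthy difference lies in how you handle the mixed Reidemeister moves and the endpoint moves. You propose to transport these moves back to the annular side via the diagrammatic bijection and then invoke annular invariance there; the paper instead observes more directly that, since mixed crossings are never subjected to the skein rules (i)--(ii), the endpoint move (and, by the same token, the mixed R2 and R3 moves) is simply \emph{invisible} to the bracket computation---the moving-part states before and after such a move differ only by an isotopy relative to ${\rm O}$, so the counts $(k,n,m,l)$ are unchanged. Your route through the correspondence is perfectly valid but slightly heavier machinery; the paper's observation short-circuits the argument by exploiting the passivity of ${\rm O}$ under the defining rules, which is why you need not worry that ``invariance cannot be shown by a local skein computation''---it can, once one notes there is nothing to smooth.
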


\begin{proof}
The well-definiteness of the ${\rm O}$-mixed / universal  ${\rm O}$-mixed bracket polynomials is a direct consequence of the bijective correspondence between annular multi-knotoids and ${\rm O}$-mixed multi-knotoids (Theorem~\ref{annulartoOmixed}) and of the defining rules. Moreover, the invariance under Reidemeister moves R2 and R3 is a direct consequence of the invariance under Reidemeister moves R2 and R3 of the planar bracket polynomial (Section~\ref{sec:planar-bracket}), after noting that the endpoint move is invisible in the computations since mixed crossings are not smoothed.
\end{proof}

In analogy to the annular bracket polynomials we also have here closed state summation formuli. Finally, the ${\rm O}$-mixed / universal  ${\rm O}$-mixed bracket polynomials can also be normalized so as to also respect Reidemeister move R1. 

\begin{definition}
The ${\rm O}$-\textit{writhe} of an oriented ${\rm O}$-mixed multi-knotoid ${\rm O}\cup K$ shall be denoted $w({\rm O}\cup K)$ and is defined as the number of positive crossings minus the number of negative crossings of ${\rm O}\cup K$ (recall Figure~\ref{si}), while the mixed crossings do not contribute to the ${\rm O}$-writhe.
\end{definition}

\noindent This leads to obtaining the {\it normalized ${\rm O}$-mixed} and {\it universal ${\rm O}$-mixed bracket polynomials}.

\begin{theorem}
Let $K$ be an ${\rm O}$-mixed multi-knotoid diagram. The {\rm ${\rm O}$-mixed Jones polynomial} and the {\rm universal ${\rm O}$-mixed Jones polynomial} 
\[
V({\rm O}\cup K)\ =\ (-A^3)^{-w({\rm O}\cup K)}\, \langle {\rm O}\cup K \rangle\ \ \ {\rm and}\ \ \ V_U({\rm O}\cup K)\ =\ (-A^3)^{-w({\rm O}\cup K)}\, \langle {\rm O}\cup K \rangle_U\
\]
\noindent where $w({\rm O}\cup K)$ is the ${\rm O}$-writhe of ${\rm O}\cup K$,  $A = t^{-1/4}$, $\langle {\rm O}\cup K \rangle$ the ${\rm O}$-mixed bracket polynomial of ${\rm O}\cup K$ and $\langle {\rm O}\cup K \rangle_U$  the universal ${\rm O}$-mixed bracket polynomial of ${\rm O}\cup K$, are isotopy invariants of ${\rm O}$-mixed multi-knotoids.
\end{theorem}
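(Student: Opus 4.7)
The plan is to reduce the claim to the classical normalization argument by combining the already-established regular-isotopy invariance of the (universal) ${\rm O}$-mixed bracket polynomial (Theorem~\ref{th:O-pkaufbst}) with a case-by-case analysis of how the ${\rm O}$-writhe $w({\rm O}\cup K)$ behaves under each move of the ${\rm O}$-mixed Reidemeister equivalence (Definition~\ref{def:O-mixed-diagram-equiv}). Since the arguments for $V$ and $V_U$ are identical (the normalization factor depends only on the writhe, not on the state structure), I will treat both simultaneously.

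First I would verify the R1 case, which is the only move under which neither the bracket nor the writhe is individually invariant. Applying rules (i) and (ii) of Definition~\ref{mix-pkaufbst} to a positive kink in the $K$-part, one of the two smoothings produces a nugatory null-homotopic unknot and the other a detour of the arc; combined with rule (iii), this yields $\langle K^{+} \rangle = -A^{3}\, \langle K \rangle$, and analogously $\langle K^{-} \rangle = -A^{-3}\, \langle K \rangle$ for a negative kink. On the other hand, $w(K^{\pm}) = w(K) \pm 1$ by Definition~\ref{si}, so the multiplicative factor $(-A^3)^{-w}$ cancels the bracket's change exactly, giving $V(K^{\pm}) = V(K)$. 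This is the only computational step of the proof.

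Next I would check that the writhe is invariant under every other allowed move, so that regular-isotopy invariance of the bracket transfers directly to isotopy invariance of the normalized polynomial. A planar isotopy or endpoint swing move does not change any crossing data. An R2 move in the moving part introduces or removes a pair of opposite-sign crossings, which contribute zero net change; an R3 move permutes three crossings without changing their signs. For the mixed Reidemeister moves (Figure~\ref{mreidm}), only mixed crossings are created, destroyed or permuted, and by definition mixed crossings do not contribute to $w({\rm O}\cup K)$. Finally, for the endpoint moves (Figure~\ref{almo}), the strand adjacent to an endpoint is pushed across an arc of ${\rm O}$: any new crossings created are again mixed and therefore invisible to the writhe. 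Combined with the invariance statements of Theorem~\ref{th:O-pkaufbst}, this shows $V$ and $V_U$ are preserved by every generator of ${\rm O}$-mixed equivalence other than R1, which was handled in the previous paragraph.

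The only place to exercise real care is the endpoint move: one must confirm that in the ${\rm O}$-mixed diagrammatic convention, every crossing produced by sliding the leg or head across an arc of the fixed component~${\rm O}$ is counted as a mixed crossing and hence excluded from $w({\rm O}\cup K)$. This follows directly from Definition~\ref{def:O-mixed-diagram}, where any crossing between the moving and fixed parts is by convention a mixed crossing. Once this is noted, the assembly of the argument is routine and concludes the proof that $V$ and $V_U$ are isotopy invariants of ${\rm O}$-mixed multi-knotoids, and hence, via Theorem~\ref{annulartoOmixed}, new isotopy invariants of annular multi-knotoids.
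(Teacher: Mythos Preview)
Your proposal is correct and follows the standard writhe-normalization argument that the paper relies on; indeed, the paper states this theorem without proof and, for the analogous ${\rm H}$-mixed Jones polynomial (Theorem~\ref{th:Jones-H}), gives only a one-line proof citing bracket invariance, the writhe definition, and the planar case. Your version spells out the case analysis in more detail than the paper does; the one small point to tighten is that Theorem~\ref{th:O-pkaufbst} as stated only asserts bracket invariance under R2 and R3, so you should note explicitly (as the paper's proof of that theorem does for the endpoint move) that the bracket is also unchanged under MR2 and MR3 because mixed crossings are never smoothed.
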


\section{Toroidal knotoids}\label{sec:toroidal-knotoid}

Building on the above, we now extend the theory of annular knotoids and multi-knotoids to the torus, collectively referred to as toroidal knotoids. In Figure~\ref{ktt} we illustrate a knotoid, a linkoid, a multi-knotoid and a multi-linkoid in the torus. 

\begin{figure}[H]
\begin{center}
\includegraphics[width=6in]{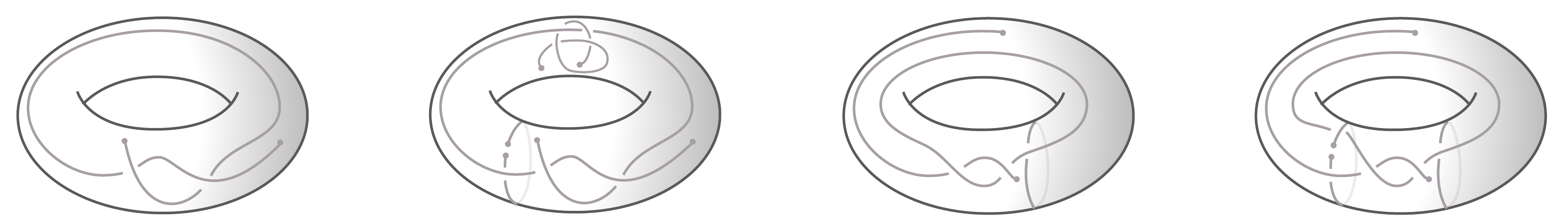}
\end{center}
\caption{A knotoid, a linkoid, a multi-knotoid and a multi-linkoid in the torus.}
\label{ktt}
\end{figure}

In analogy to the annular case, we study toroidal knotoids in different geometric contexts, such as through their lifts  in the thickened torus and through their representations as planar ${\rm H}$-mixed knotoids, that is, planar multi-knotoids containing a fixed Hopf link.  We  highlight inclusion relations between planar, annular and toroidal knotoids and we  define the Turaev loop bracket polynomial for toroidal knotoids and ${\rm H}$-mixed knotoids.

\subsection{Toroidal knotoid diagrams and their equivalence}\label{sec:toroidal-equivalence}

\begin{definition}
A {\it toroidal multi-knotoid diagram} is a smooth immersion of the unit interval $[0, 1]$ and a number of circles in the torus $T^2$, such that the only singularities are transversal double points, the crossings, endowed with over/under crossing information. A {\it toroidal knotoid diagram} is a multi-knotoid diagram with no closed components.

\noindent A toroidal knotoid diagram inherently possesses a natural orientation from its {\it leg} (the image of $0$) to its {\it head } (the image of $1$). An {\it oriented } toroidal multi-knotoid diagram is obtained by also assigning orientations to the closed components.
\end{definition}

An important type of essential closed components in toroidal multi-knotoid diagrams are torus knots and torus links. We recall that a \textit{$(p,q)$-torus knot}, for $p$ and $q$  relatively prime integers, is an essential simple closed curve in the torus $T^2$ winding $p$ times around the longitude and $q$ times along the meridian of the torus. Similarly, a \textit{$(p',q')$-torus link} or a \textit{$(p,q)$-torus link on $l$ components} is a set of $l$ parallel copies of a $(p,q)$-torus knot, where $p'= l \cdot p$ and $q'= l \cdot q$. In our definition of torus knots and links, we include the pairs $(p,0)$ and $(0,q)$. 

\begin{remark}\label{rem:essentialpq}
A $(p,q)$-torus knot, considered as a classical knot in $S^3$, is isotopic to a $(q,p)$-torus knot.  However, as essential curves embedded in the torus, these two knots are distinct in $T^2$. Still, note that in $T^2$ a $(p,q)$-torus knot is the same as a $(-p,-q)$-torus knot, and similarly, a $(p,-q)$-torus knot is the same as a $(-p,q)$-torus knot. 
\end{remark}

\begin{definition}\label{torpl}
An (oriented) {\it toroidal multi-knotoid} is defined as an equivalence class of (oriented)  toroidal multi-knotoid diagrams under surface isotopies and all versions of the (oriented)  classical Reidemeister moves, as exemplified in Figure~\ref{rmoves}, all together comprising the (oriented)  {\it Reidemeister equivalence} for toroidal multi-knotoids.  
\end{definition}

The toroidal Reidemeister equivalence, and in particular torus isotopy, includes the endpoint swing moves in their diagrammatic regions as well as the `toroidal moves'. These are global moves, analogous to the spherical move that makes a crucial difference between planar and spherical multi-knotoids.

\begin{definition}\label{def:toroidal-moves}
A {\it toroidal move} is a  move induced by an isotopy of $T^2$, whereby an arc can  slide either from the outer part to the inner part of the torus along a meridian and vice versa, and is called {\it longitudinal move}, or from the one side to the other side of the torus along a longitude, and is called {\it meridional move}.
\end{definition}

Examples of longitudinal moves are illustrated in Figures~\ref{toroidal-move} and ~\ref{toroidal-move-knotoid} (top). The longitudinal moves make a crucial distinction between toroidal and annular multi-knotoids, as illustrated in Figure~\ref{toroidal-move-knotoid} (bottom). In this figure, the trivial knotoid can slide from the one side of the essential curve to the other side, a move that cannot be realized in the annulus. However, the meridional move has a direct analogue in the annulus, so it cannot make a distinction between the two settings.

\begin{figure}[H]
\begin{center}
\includegraphics[width=5.7in]{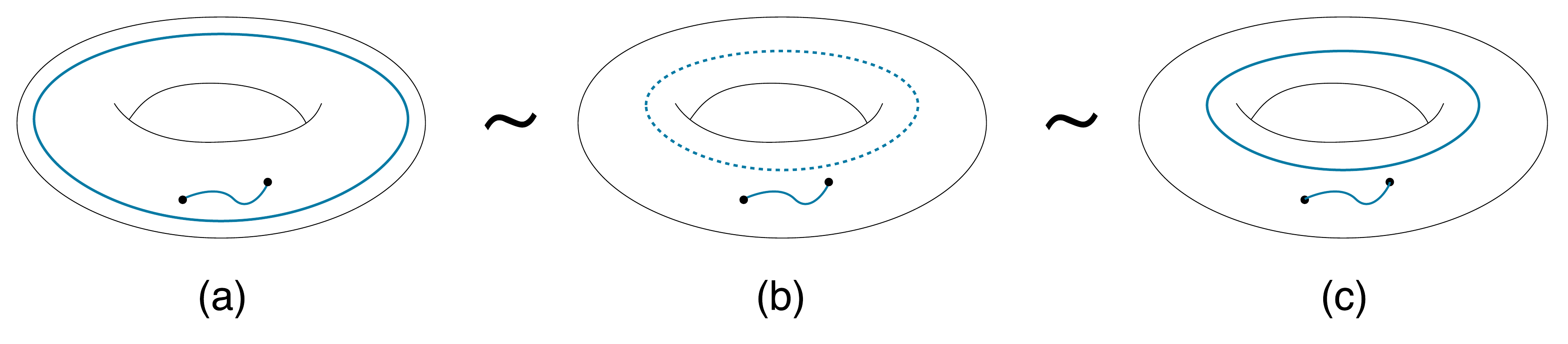}
\end{center}
\caption{A longitudinal toroidal  move allows an essential unknot to move along a meridian from (a) to (b) and then to (c).}
\label{toroidal-move}
\end{figure}

\begin{figure}[H]
\begin{center}
\includegraphics[width=4.5in]{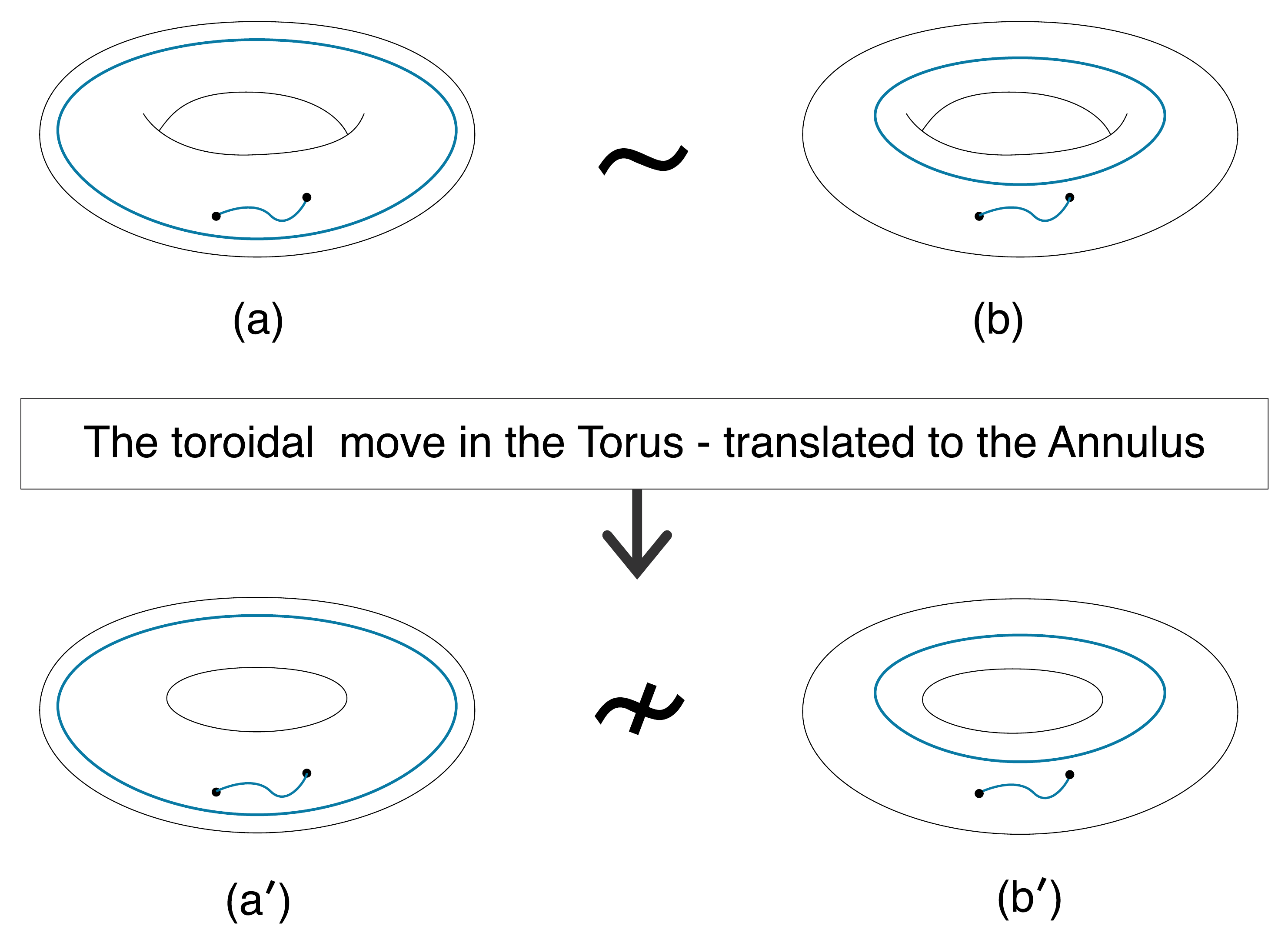}
\end{center}
\caption{Top: the longitudinal toroidal move allows the trivial knotoid to move along a meridian from (a) to (b). Bottom: the longitudinal toroidal move is not allowed for annular multi-knotoids due to the forbidden  moves.}
\label{toroidal-move-knotoid}
\end{figure}

\begin{remark}\label{trivial_pq}
As another interesting observation we point out that a knotoid in the shape of a $(p,q)$-torus curve is just the trivia knotoid, by surface isotopy. 
\end{remark}

\subsection{Lifting toroidal knotoids}\label{sec:toroidal-lift}

We shall now define the lift of a toroidal multi-knotoid as a `rail curve' and a collection of closed curves in the thickened torus, in analogy to spatial multi-knotoids and multi-knotoids  in the thickened annulus (recall Subsections~\ref{sec:planar-lift}) and~\ref{sec:annular-lift}). We consider the thickening $T^2 \times I$, where $I$ denotes the unit interval. View  Figure~\ref{ex3}, where the two dotted curves denote the inner toroidal boundary of $T^2 \times I$. 

\begin{figure}[H]
\begin{center}
\includegraphics[width=2.2in]{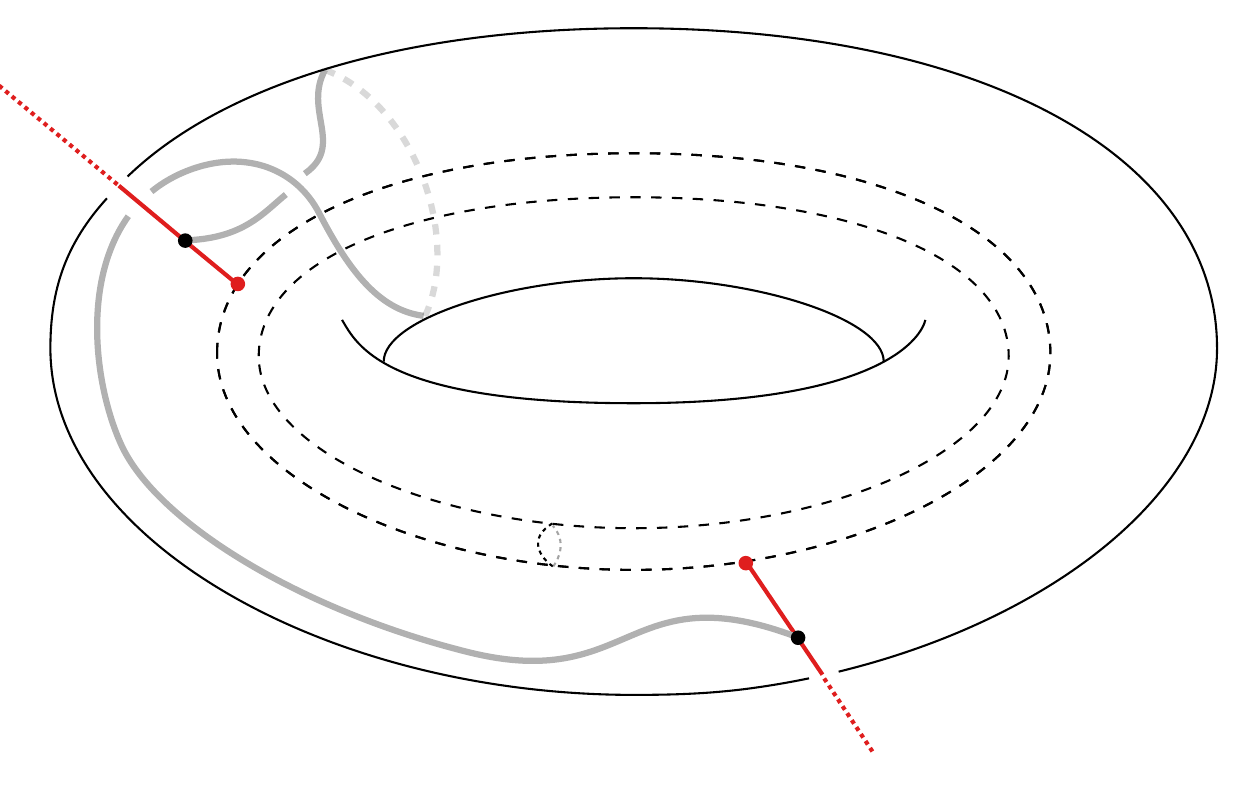}
\end{center}
\caption{Lifting a toroidal knotoid in the thickened torus, where the rails are extended for visual purposes.}
\label{ex3}
\end{figure}

\begin{definition} \label{thickenedlift}
The {\it lift} of an (oriented) toroidal multi-knotoid diagram in the thickened torus $T^2 \times I$ is defined so that: each classical crossing is embedded in a sufficiently small 3-ball that lies in the interior of the thickened torus, the simple arcs connecting the crossings can be replaced by isotopic ones in the interior of $T^2 \times I$, and the endpoints are attached to two parallel segments, the rails $L$ and $H$, which are perpendicular to the torus $T^2$ and lie in $T^2 \times I$. The resulting lifted multi-knotoid together with its two rails is called a {\it multi-knotoid in the thickened torus} and it is a collection of an open curve with its endpoints attached to the rails, and a number of closed curves, all constrained to the interior of the thickened torus $T^2 \times I$, with the exception of the endpoints which may lie in  $T^2 \times \{0\}$ and $T^2 \times \{1\}$. 
\end{definition}

An example is illustrated in Figure~\ref{ex3}, where the rails are extended for visual purposes. Further, for a better visualization of the lift, Figure~\ref{lift} bottom row illustrates comparative flat lifts of a planar, an annular and a toroidal multi-knotoid, where the annulus and the torus (in top row) are described as identification spaces. 

\begin{figure}[H]
\begin{center}
\includegraphics[width=5in]{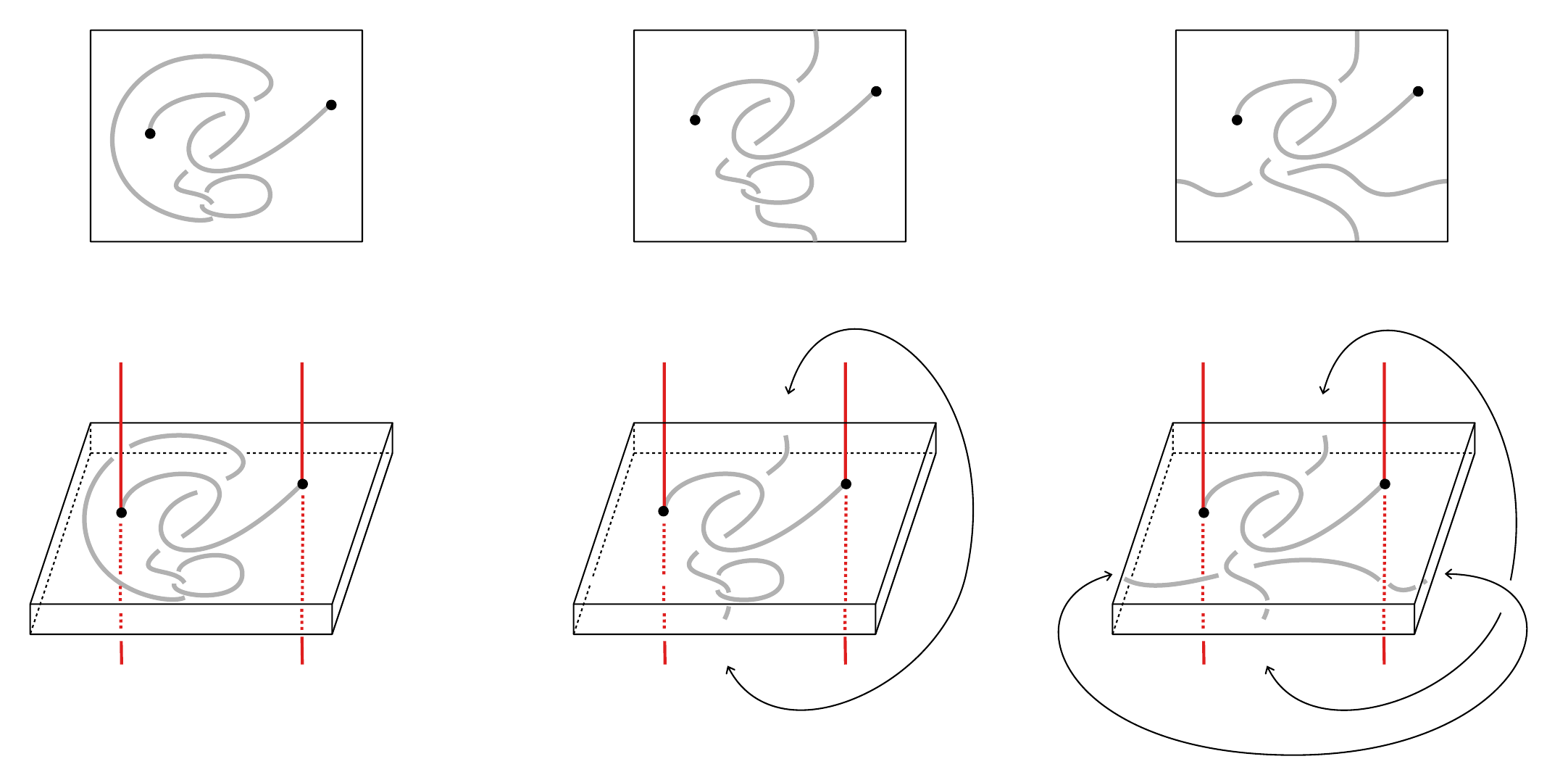}
\end{center}
\caption{Flat liftings of planar, annular and toroidal multi-knotoids, where the rails are extended beyond the thickenings for visual purposes, and where the identifications are pointed by arrows.}
\label{lift}
\end{figure}

\begin{definition} \label{thickenedliftisotopy}
Two (oriented) multi-knotoids in the thickened torus $T^2 \times I$ are said to be {\it rail isotopic} if they are related by an ambient isotopy of $T^2 \times I$ that takes place in its interior and in the complement of the two rails  $L$ and $H$, keeping the endpoints of the open curve attached to their respective rails, being allowed to slide along them ({\it rail sliding}), together with local shifts of the rails up to isotopy, the {\it rail shifts} (for an example see Figure~\ref{endpm} and compare with Subsections~\ref{sec:planar-lift}) and~\ref{sec:annular-lift}). A rail isotopy class shall be also referred to as {\it  multi-knotoid in the thickened torus}. 
\end{definition}

\begin{figure}[H]
\begin{center}
\includegraphics[width=3.2in]{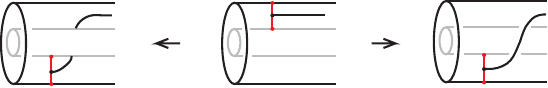}
\end{center}
\caption{A rail shift in $T^2 \times I$ induces endpoint swing moves in $T^2$.}
\label{endpm}
\end{figure}

We now state the following  theorem, whose proof is detailed in the next Subsection~\ref{sec:toroidal-inclusions}:

\begin{theorem}\label{isopTT}
Two multi-knotoids in the thickened torus $T^2 \times I$ are rail isotopic if and only if any two corresponding toroidal multi-knotoid diagrams of theirs, projected to the torus $T^2 \times \{0\}$, are Reidemeister equivalent.
\end{theorem}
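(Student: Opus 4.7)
The plan is to adapt the proof of Theorem~\ref{isopST} for the annular case to the toroidal setting, working throughout in the PL category so that a rail isotopy of multi-knotoids in $T^2 \times I$ is realized by a finite sequence of $\Delta$-moves (each confined to a small three-ball disjoint from the rails), rail shifts, and sliding moves of endpoints along the rails. The key conceptual difference from the annular case is that $T^2 \times I$ does not embed in a three-ball while preserving its topology, so one cannot reduce directly to the spatial setting of \cite{GK}. Instead, the strategy is to invoke the argument of \cite[Theorem~2.1]{GK} \emph{locally}, three-ball by three-ball, and to handle the nontrivial topology of $T^2$ separately via surface isotopies.

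For the forward implication, each $\Delta$-move applied to the lifted multi-knotoid away from the rails is contained in a small three-ball in the interior of $T^2 \times I$. A generic perpendicular projection of this three-ball to $T^2 \times \{0\}$ is regular with respect to the move, and, as in \cite[Theorem~2.1]{GK}, the projected move is either a local surface isotopy on $T^2$ or one of the Reidemeister moves R1, R2, R3, away from the endpoints. A sliding move of an endpoint along its rail can be localized so that its projection is invisible at the diagrammatic level, while a rail shift projects to a corresponding endpoint swing move in the diagrammatic region, as illustrated in Figure~\ref{endpm}, which is part of the surface isotopy of $T^2$. Hence any two projected diagrams of rail-isotopic representatives are Reidemeister equivalent on $T^2$.

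For the converse, any Reidemeister move of a toroidal multi-knotoid diagram takes place in a small disc away from the endpoints and lifts, exactly as in the planar setting, to a local $\Delta$-isotopy in a three-ball of $T^2 \times I$ disjoint from the rails. Endpoint swing moves in $T^2$ lift to rail shifts by construction. It remains to lift surface isotopies of $T^2$, including the longitudinal and meridional toroidal moves of Definition~\ref{def:toroidal-moves}: every ambient isotopy of $T^2$ extends to an ambient isotopy of $T^2 \times I$ that is trivial in the $I$-direction, and since the rails are one-dimensional and therefore of codimension two in $T^2 \times I$, a small transverse perturbation of such an extension ensures that the isotoped arcs avoid the rails throughout the deformation.

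The principal difficulty lies in this last point: one must verify that the toroidal moves, which are genuinely global on $T^2$ and have no analogue for annular or planar multi-knotoids, can be realized by rail isotopies respecting the rail constraint. The argument reduces to a standard transversality statement combined with the existence of tubular neighborhoods of the rails in $T^2 \times I$, guaranteeing that any ambient isotopy of $T^2$ lifts to an ambient isotopy of $T^2 \times I$ in the complement of the rails. All other steps parallel the annular case of Theorem~\ref{isopST}, with diagrammatic moves on $T^2$ replacing those on $\mathcal{A}$.
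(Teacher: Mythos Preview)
Your proposal is correct and follows essentially the same approach as the paper: work in the PL category, localize each isotopy move of the lifted multi-knotoid to a small three-ball in the interior of $T^2 \times I$, and invoke the argument of \cite[Theorem~2.1]{GK} together with general position to translate it into a diagrammatic Reidemeister equivalence move on $T^2 \times \{0\}$, while endpoint slidings along the rails become diagrammatically invisible. Your treatment is in fact more thorough than the paper's: you explicitly note the obstruction to embedding $T^2 \times I$ in a three-ball (cf.\ Remarks~\ref{torus_no_surjection}), you address the converse implication separately, and you discuss the lifting of the global toroidal moves via transversality, all of which the paper leaves implicit.
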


\begin{proof}
For the proof we work in the PL category and we adapt the arguments of \cite{GK} for spatial knotoids to the setting of multi-knotoids in the thickened torus $T^2 \times I$. An isotopy move applied to a multi-knotoid in $T^2 \times I$ away from the rails, is local and can thus be confined within a 3-ball contained in the interior of $T^2 \times I$. By a general position argument, there is a  projection of the 3-ball to the outer torus $T^2 \times \{0\}$  which is regular with respect to  this move, so that it translates into a diagrammatic move of  Reidemeister equivalence away from the endpoints, as in Theorem 2.1 of  \cite{GK}. 
Finally, a sliding move of an endpoint along its rail can be sufficiently localized, so as to project perpendicularly on the outer torus, resulting in the move to be invisible on the diagrammatic level.  Hence the proof is concluded.
\end{proof}

\subsection{Inclusions}\label{sec:toroidal-inclusions}

The inclusion of a disc or a square in the torus enables us to view planar multi-knotoid diagrams as toroidal ones. View left hand side of Figure~\ref{planarannulartorus} and  left hand illustration of Figure~\ref{lift}. Further, the equivalence moves carry through,  so we have a well-defined map $\iota: \mathcal{K}(D^2) \longrightarrow \mathcal{K}(T^2)$, which is an injection, since the equivalence moves are the same in both settings. In terms of lifts, the inclusion of the thickened disc $D^2 \times I$  or the thickened square $I \times I \times I$ (which are homeomorphic to a three-ball)  in the thickened torus allows one to view spatial multi-knotoids as multi-knotoids in $T^2 \times I$, (recall Remark~\ref{thickdisc}). Namely, and combining with Theorem~\ref{isopTT}, we obtain  an injection on the level of isotopy classes of spatial multi-knotoids into the set of multi-knotoids in the thickened torus (view also Figure~\ref{lift}). Namely:

\begin{proposition}\label{disctorus}
The maps $\iota: \mathcal{K}(D^2) \longrightarrow \mathcal{K}(T^2)$ and $\iota: \mathcal{K}(D^2 \times I) \longrightarrow \mathcal{K}(T^2 \times I)$ are injections. 
\end{proposition}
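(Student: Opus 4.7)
The well-definedness of both maps is immediate: the inclusions $D^2 \hookrightarrow T^2$ and $D^2 \times I \hookrightarrow T^2 \times I$ send planar multi-knotoid diagrams, respectively spatial multi-knotoids, to toroidal diagrams, respectively multi-knotoids in the thickened torus; and every planar isotopy, Reidemeister move R1--R3, and rail-isotopy move is local, so it remains a valid equivalence move in the larger ambient space. Hence the maps descend to equivalence classes, and the first step of the proof is simply to record this observation.

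For injectivity of $\iota: \mathcal{K}(D^2) \to \mathcal{K}(T^2)$, the strategy is to show that a toroidal equivalence between two diagrams supported in $D^2$ can be replaced by a planar equivalence within $D^2$. The moves available in a toroidal equivalence beyond those of the planar equivalence are precisely the longitudinal and meridional toroidal moves of Definition~\ref{def:toroidal-moves}, each of which acts nontrivially only on arcs winding around a meridian or a longitude of $T^2$; a diagram supported in the simply-connected disc $D^2$ admits no such arcs. To make this rigorous I would factor the inclusion as $D^2 \hookrightarrow \mathcal{A} \hookrightarrow T^2$ and invoke Proposition~\ref{discannulus}, reducing the claim to showing that two diagrams in $D^2$ which are toroidally equivalent are already annularly equivalent. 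The only additional move in $T^2$ relative to $\mathcal{A}$ is the longitudinal toroidal move (Figure~\ref{toroidal-move-knotoid}), and on a diagram supported in $D^2$ any nontrivial application of it is shown to be undoable by a subsequent sequence of toroidal moves, leaving the annular equivalence class unchanged.

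Injectivity of the spatial map $\iota: \mathcal{K}(D^2 \times I) \to \mathcal{K}(T^2 \times I)$ then follows by combining Theorem~\ref{isopTT} with the planar analogue discussed in Section~\ref{sec:planar-lift}: rail isotopy classes of spatial multi-knotoids in $D^2 \times I$, respectively in $T^2 \times I$, correspond bijectively to Reidemeister equivalence classes of planar, respectively toroidal, multi-knotoid diagrams, so the diagrammatic injection transfers directly to the spatial one. I expect the main obstacle of the whole argument to lie in the previous paragraph, namely in rigorously handling intermediate diagrams of a toroidal equivalence that temporarily leave $D^2$; this I would address either through the annular factorization above, or, more directly, via a universal-cover argument on $T^2 \times I$, lifting a hypothetical rail isotopy to $\mathbb{R}^2 \times I$ and deforming it so as to stay within the fundamental copy of $D^2 \times I$, using that the lifted multi-knotoid sits in a 3-ball whose embedding exhibits no linking with the lifted rails.
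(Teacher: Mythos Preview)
The paper does not supply a formal proof of this proposition; the justification is the paragraph immediately preceding it, which asserts that $\iota$ ``is an injection, since the equivalence moves are the same in both settings,'' and then obtains the thickened version by combining with Theorem~\ref{isopTT}. Your well-definedness argument and your reduction of the spatial case to the diagrammatic one via Theorem~\ref{isopTT} match this exactly.

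Where you depart from the paper is in attempting to actually justify injectivity, and you correctly isolate the difficulty the paper passes over: a toroidal equivalence between two diagrams lying in $D^2$ may pass through intermediate diagrams that leave $D^2$, so it is not enough to observe that the Reidemeister moves are local. A small correction: your description of when toroidal moves act nontrivially is slightly off --- a toroidal move pushes an arbitrary arc around the back of the torus, so no such move is applicable to a diagram wholly inside $D^2$, but this says nothing about intermediate stages, which is precisely the issue you go on to flag. As for your two proposed resolutions: the annular factorisation gains little, since Proposition~\ref{discannulus} is itself justified in the paper by the identical one-line remark (a cleaner route there, which you do not exploit, is that the surjection $\rho:\mathcal{K}(\mathcal{A})\to\mathcal{K}(D^2)$ furnishes a left inverse to $\iota$; no such left inverse exists in the toroidal case, cf.\ Remarks~\ref{torus_no_surjection}). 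The universal-cover route is the right idea, but a rail isotopy in $T^2\times I$ lifts to an isotopy in $\mathbb{R}^2\times I$ in the complement of a full $\mathbb{Z}^2$-lattice of lifted rails, and one must still argue that this can be replaced by an isotopy avoiding only the two rails in the fundamental copy of $D^2\times I$; you acknowledge this step but do not carry it out. In short, your sketch is more honest and more detailed than what the paper provides, but the essential step remains open in both approaches.
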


\begin{figure} 
\begin{center} 
\includegraphics[width=6in]{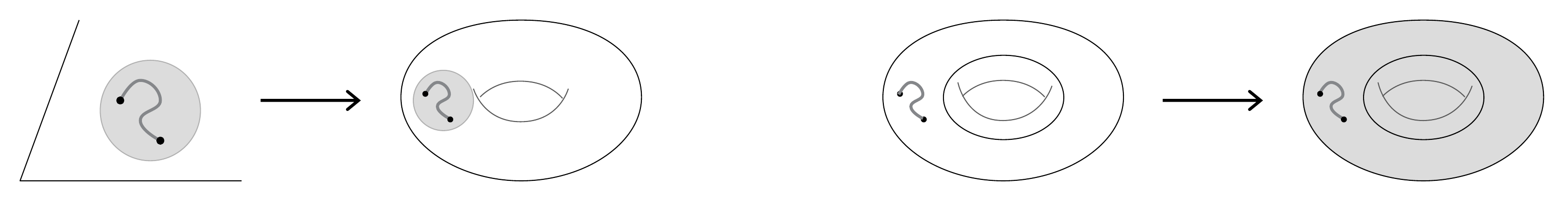} 
\end{center} 
\caption{Inclusion relations: a disc in the torus and the thickened torus in an enclosing 3-ball.} 
\label{planarannulartorus} 
\end{figure} 

\begin{remark}\label{rem:inclusionpq}
In Remark~\ref{rem:essentialpq} we argued that an essential $(p,q)$-torus closed curve is distinct from an essential $(q,p)$-torus closed curve in the theory of toroidal multi-knotoids. Interestingly, however,  a $(p,q)$-torus knot and a $(q,p)$-torus knot are elements of the same isotopy class as non-essential curves in $T^2 \times I$, by the injection of spatial multi-knotoids to multi-knotoids in $T^2 \times I$ (Proposition~\ref{disctorus}).  Hence, they are Reidemeister equivalent in $T^2$. 
\end{remark}

\begin{remarks} \rm \label{torus_no_surjection} 
\,\\ 
     (i) \ On the other hand, the inclusion of the thickened torus in an enclosing three-ball (see right hand side of Figure~\ref{planarannulartorus}), allows us to view any multi-knotoid in the thickened torus as a spatial (or even spherical) multi-knotoid,  by transforming at the same time the two piercing rail segments into two piercing rails of the three-ball and subsequently of the three-space. However, this construction does not induce a well-defined map of the theory of toroidal multi-knotoids onto planar multi-knotoids, since isotopic multi-knotoids in the thickened torus (resp. equivalent toroidal multi-knotoid diagrams) may not give rise to isotopic spatial  or  spherical multi-knotoids  (resp. equivalent planar or  spherical multi-knotoid diagrams). The reason is that, unlike the annular case, when including the thickened torus in a 3-ball, the two rail segments do not extend initially to the full interior of the 3-ball, so we have no control on isotopies taking place below the rails before extending them to rail lines, and this on a projection to a disc may perform unwished forbidden moves. 

    \noindent (ii) \ By the same arguments, we cannot prove Theorem~\ref{isopTT} by interpreting toroidal multi-knotoids as spatial ones, as we could do in the case of  Theorem~\ref{isopST} for the annular setting.
\end{remarks}

\begin{figure}[H] 
\begin{center} 
\includegraphics[width=6in]{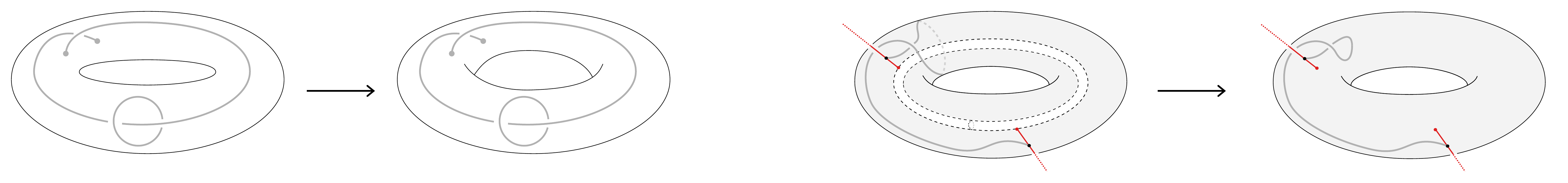} 
\end{center} 
\caption{Inclusion relations: an annulus in the torus; and the thickened torus in the solid torus.} 
\label{annulartorus} 
\end{figure} 

We shall now discuss the inclusion of the annulus in the torus. Recall from Subsection~\ref{sec:sphere-plane} that the 2-sphere is the gluing along the circular boundaries of two discs or the one-point identification of the boundary of a disc or the one-point compactification of the plane. Equivalently, the plane is homeomorphic to the sphere with one point (the `point at infinity') removed. Likewise, the torus $T^2$ can be obtained as the gluing along the inner and outer circular boundaries of two annuli or as the identification of the two boundary circles of an annulus. See left hand side of Figure~\ref{annulartorus}. Equivalently, the annulus $\mathcal{A}$ is homeomorphic to the torus with one longitudinal circle removed, inducing an inclusion map $\iota: \mathcal{A} \hookrightarrow T^2$. This inclusion allows one to view any annular multi-knotoid diagram  as a  toroidal multi-knotoid diagram, excluding the longitudinal toroidal move, recall Definition~\ref{def:toroidal-moves}, 
since such a move would require to cross the longitudinal circle. Note that in the absence of knotoid, a move analogous to the longitudinal toroidal move can be realized for knots and links in $\mathcal{A}$ with no obstruction, using the Reidemeister moves. 

To pass now to equivalence classes of multi-knotoid diagrams, any equivalence move of annular multi-knotoid diagrams is an equivalence move of toroidal multi-knotoid diagrams. Hence, the inclusion map $\iota$ induces a well-defined map $\mu: \mathcal{K}(\mathcal{A}) \longrightarrow \mathcal{K}(T^2)$. However, $\mu$ is not injective due to the longitudinal toroidal move. Note that, if the longitudinal toroidal moves are excluded from the equivalence in $T^2$ we have an injection $\mu^\prime: \mathcal{K}(\mathcal{A}) \longrightarrow \mathcal{K}^\prime(T^2)$ of annular multi-knotoids to toroidal multi-knotoids, where $\mathcal{K}^\prime(T^2)$ stands for the set of restricted equivalence classes in $T^2$. To summarize, annular multi-knotoids can be viewed as toroidal multi-knotoids for which the longitudinal toroidal move is not permitted. 

Using further the description of the thickened torus as the gluing of two thickened annuli and the inclusion of a thickened annulus in the thickened torus, we also have an inclusion of the theory of multi-knotoids in the thickened annulus into the theory of multi-knotoids in the thickened torus. 

\begin{proposition}\label{prop:no-injective}
    There is a well-defined map $\mu: \mathcal{K}(\mathcal{A}) \longrightarrow \mathcal{K}(T^2)$ from the set of annular multi-knotoids $\mathcal{K}(\mathcal{A})$ to the set of toroidal muti-knotoids $\mathcal{K}(T^2)$, which is not injective. Similarly, there is a well-defined map $\mu: \mathcal{K}(\mathcal{A} \times I) \longrightarrow \mathcal{K}(T^2 \times I)$ from the set of multi-knotoids in the thickened annulus $\mathcal{K}(\mathcal{A} \times I)$ to the set of  multi-knotoids in the thickened torus $\mathcal{K}(T^2 \times I)$, which is not injective.
\end{proposition}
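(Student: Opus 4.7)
The plan is to separate the statement into two parts: well-definedness of $\mu$ and failure of injectivity, and then to bootstrap from the surface level to the thickened level using the lifting bijections already established in Theorems~\ref{isopST} and~\ref{isopTT}.

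For well-definedness, I would work directly from the inclusion $\iota:\mathcal{A}\hookrightarrow T^2$ described in the paragraph preceding the proposition, realizing the annulus as the torus with a longitudinal circle removed. Given an annular multi-knotoid diagram $D\subset\mathcal{A}$, its image $\iota(D)\subset T^2$ is by construction a toroidal multi-knotoid diagram in the sense of Section~\ref{sec:toroidal-knotoid}. I would then check that every generating move of the annular Reidemeister equivalence (Definition~\ref{anpl}) maps to a legal move of the toroidal Reidemeister equivalence (Definition~\ref{torpl}): the three Reidemeister moves R1, R2, R3 and the endpoint swing moves are all local, taking place in a disc in $\mathcal{A}$, and hence their images take place in a disc in $T^2$; they therefore qualify as Reidemeister and surface-isotopy moves for toroidal diagrams. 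Since the toroidal equivalence only \emph{adds} the longitudinal and meridional toroidal moves to the annular one, nothing is lost, and $\mu([D]):=[\iota(D)]$ is independent of the representative $D$.

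For failure of injectivity, the strategy is to produce two annular diagrams $D_1,D_2\in\mathcal{K}(\mathcal{A})$ whose images become equivalent in $\mathcal{K}(T^2)$ via the longitudinal toroidal move, while $D_1\not\sim D_2$ in $\mathcal{K}(\mathcal{A})$. The candidate is exactly the configuration illustrated in Figure~\ref{toroidal-move-knotoid}: take $D_1$ to be a trivial knotoid together with an essential unknot, with the knotoid sitting on one side of the essential curve; and take $D_2$ to be the same data but with the trivial knotoid on the other side of the essential curve. In $T^2$ the longitudinal toroidal move slides the trivial knotoid across a meridian from one side of the essential curve to the other, so $[\iota(D_1)]=[\iota(D_2)]$ in $\mathcal{K}(T^2)$. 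To conclude $[D_1]\neq[D_2]$ in $\mathcal{K}(\mathcal{A})$, I would invoke the (universal) annular bracket polynomial of Definition~\ref{annularskein}, which by Theorem~\ref{th:pkaufbst} is a regular isotopy invariant and distinguishes states with different numbers of \emph{inner} essential unknots from those with different numbers of \emph{outer} essential unknots: for $D_1$ the essential unknot is outer relative to the knotoid, contributing a variable $t_1$ (with $x_0,v_0$ elsewhere), whereas for $D_2$ it is inner, contributing $x_1$. Since $x_1$ and $t_1$ are independent generators, $\langle D_1\rangle_U\neq\langle D_2\rangle_U$, and adjusting by the writhe (Theorem~\ref{th:jones-an}) preserves this distinction even under R1; this is the main technical point and the place where the obstruction of the forbidden moves inside $\mathcal{A}$ is actually used, since it is precisely the forbiddenness of swapping inner and outer essential components past the knotoid endpoints that locks the variables.

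Finally, for the thickened version, I would combine the surface-level map with the lifting bijections. Given a multi-knotoid $\widetilde{K}$ in $\mathcal{A}\times I$, project it to an annular diagram $D$ via Theorem~\ref{isopST}, apply $\mu$ to obtain the toroidal diagram $\iota(D)$, and lift it back by Theorem~\ref{isopTT} to a multi-knotoid $\widetilde{\iota(K)}$ in $T^2\times I$. Well-definedness of this composition is immediate from the two bijections and the well-definedness of $\mu$ on diagrams. For non-injectivity, lift the two surface-level witnesses $D_1,D_2$ constructed above: by Theorem~\ref{isopST} their lifts $\widetilde{D_1},\widetilde{D_2}$ are non-isotopic in $\mathcal{A}\times I$, while by Theorem~\ref{isopTT} the lifts of $\iota(D_1),\iota(D_2)$ are rail isotopic in $T^2\times I$, completing the argument. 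The only step I expect to require careful bookkeeping is the bracket computation distinguishing $D_1$ from $D_2$, since one must make sure that the trivial knotoid is nested correctly with respect to the essential unknot so that the states indeed separate into the inner/outer classes distinguished by Definition~\ref{annularskein}.
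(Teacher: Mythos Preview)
Your proposal is correct and follows the same overall line as the paper: the paper's argument for the proposition is really the discussion in the paragraphs immediately preceding it, where well-definedness is obtained because every annular equivalence move is a (local) toroidal equivalence move, and non-injectivity is attributed to the longitudinal toroidal move via the example in Figure~\ref{toroidal-move-knotoid}. Your proof tracks this exactly, and the lift to the thickened level via Theorems~\ref{isopST} and~\ref{isopTT} is the intended passage.

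The one place where you go beyond the paper is in actually \emph{proving} that the two annular configurations $D_1,D_2$ are inequivalent: the paper simply points to the bottom of Figure~\ref{toroidal-move-knotoid} and the forbidden moves, whereas you compute with the (universal) annular bracket to see $t$ versus $x$ (or $t_1$ versus $x_1$). That is a genuine improvement in rigor and exactly the kind of verification the annular bracket was designed for. One small bookkeeping remark: the invariance of the \emph{universal} annular bracket is the unlabeled theorem immediately following Definition~\ref{annularskein}, not Theorem~\ref{th:pkaufbst} (which treats the non-universal $4$-variable version); either invariant already separates $D_1$ from $D_2$, so the citation slip is harmless.
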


Figure~\ref{lift} illustrates the above inclusion relations in terms of identification spaces and their flat lifts for a better visual perception. 

\begin{remark}\label{rem:incl-TT-ST}
The inclusion of the thickened torus in the solid torus (we refer the reader to the right-hand side of Figure~\ref{annulartorus}), thus the thickened annulus, allows us to view knotoids in the thickened torus as knotoids in the thickened annulus, where the meridional windings trivialize, by extending at the same time the two piercing rail segments in the width of the thickened annulus. However, this inclusion does not induce a well-defined map, namely a surjection, of the theory of toroidal multi-knotoids onto annular multi-knotoids, since isotopic multi-knotoids in the thickened torus (resp. equivalent toroidal multi-knotoid diagrams) may not give rise to isotopic multi-knotoids in the thickened annulus (resp. equivalent annular multi-knotoid diagrams). The reason is that, when including the thickened torus in the thickened annulus, the two rail segments do not extend initially to the full interior of the thickened annulus (see right-hand side of Figure~\ref{annulartorus}), so we have no control on isotopies taking place below the rail segments, before extending them, and this on a projection to the annulus may perform unwished forbidden moves. 
\end{remark}

\subsection{Representation via mixed planar knotoids}\label{sec:H-mixed}

In this extension and in analogy to the annular case, we now represent (oriented) toroidal multi-knotoids as mixed multi-knotoids in the plane, and (oriented) multi-knotoids in the thickened torus as spatial mixed multi-knotoids. In particular, viewing $T^2 \times I$ as the complement of a Hopf link in $S^3$, an (oriented) link in $T^2 \times I$ is represented uniquely by a mixed link in $S^3$ that contains a pointwise fixed Hopf link, ${\rm H}$, as a fixed sublink and the moving part representing the multi-knotoid in the thickened torus $T^2 \times I$. This approach  was also used in \cite{DLM-pseudo} for representing toroidal pseudo knots as planar pseudo knots. See Figure~\ref{ex3} for an example. 

The fixed Hopf link ${\rm H}$ can be assumed to be almost flat, in the sense that arcs are coplanar and the two crossings are embedded in local 3-balls. So ${\rm H}$ defines naturally a projection plane. Using now the spatial lift of a multi-knotoid (recall Subsection~\ref{sec:planar-lift}), we define:

\begin{definition}\label{def:H-mixed}
 A {\it spatial ${\rm H}$-mixed multi-knotoid} is a spatial multi-knotoid ${\rm H} \cup K$  which consists of the Hopf link ${\rm H = 1_{\rm H} \cup 2_{\rm H}}$ as a pointwise fixed part with labeled components $1_{\rm H}$ and $2_{\rm H}$, and the spatial multi-knotoid $K$ (including its two rails) resulting by removing ${\rm H}$, as the \textit{moving part} or {\it $K$-part} of the mixed multi-knotoid. Specifying orientations on the closed components of $K$ results in an \textit{oriented spatial ${\rm H}$-mixed multi-knotoid}. 
 
 \noindent Furher, two spatial ${\rm H}$-mixed multi-knotoids are {\it rail isotopic} if they are related via a finite sequence of isotopies in the complement of the two rails, {\it keeping the ${\rm H}$-part pointwise fixed}, together with sliding moves of the endpoints along the rails %(see Figure~\ref{railsliding}) 
 and rail shifts. The rail isotopy class of a spatial ${\rm H}$-mixed multi-knotoid shall be also referred to as {\it spatial ${\rm H}$-mixed multi-knotoid}. 
\end{definition}

\begin{figure}[H]
\begin{center}
\includegraphics[width=1.7in]{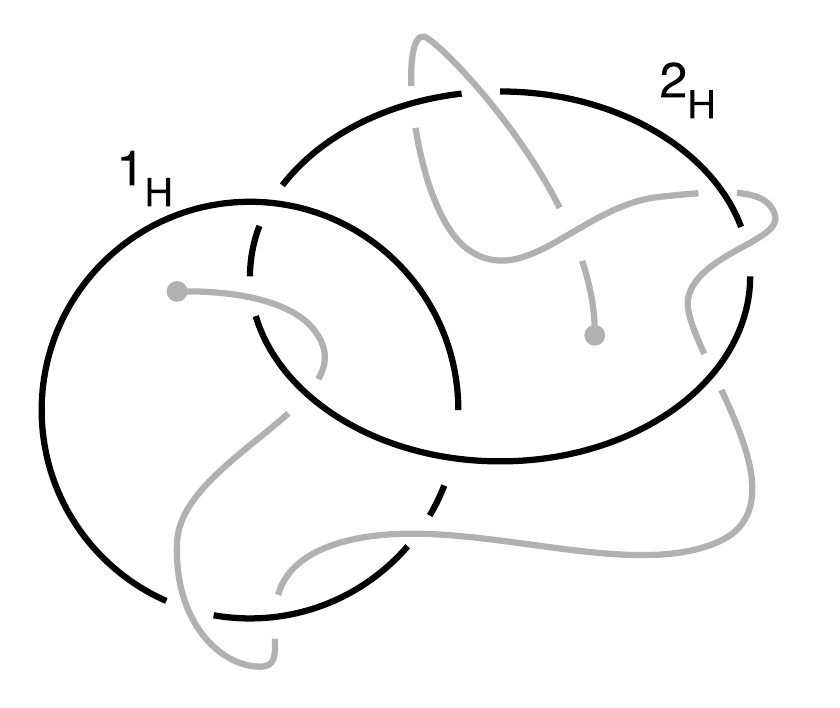}
\end{center}
\caption{A mixed knotoid representing a knotoid in $T^2\times I$.}
\label{mixed-H}
\end{figure}

\begin{remarks} \rm 
\,\\ 
    (i) As in the case of spatial ${\rm O}$-mixed multi-knotoids, one can conceptualize a spatial ${\rm H}$-mixed multi-knotoid ${\rm H} \cup K$ as a {\it trichromatic multi-knotoid}, where the two components $1_{\rm H}$ and $2_{\rm H}$ of the ${\rm H}$-part, and the $K$-part, except for the rails, are distinguished by color. For an illustration, see Figure~\ref{tricolore-mixed}.
    
    \noindent (ii) In the ${\rm H}$-mixed setting, a $(p,q)$-essential curve and a $(q,p)$-essential curve are clearly distinguished by the distinction of the two components $1_{\rm H}$ and $2_{\rm H}$ of ${\rm H}$, as argued in Remark~\ref{rem:essentialpq}. For an example, see Figure~\ref{tricolore-mixed}.

\begin{figure}[H]
\begin{center}
\includegraphics[width=1.7in]{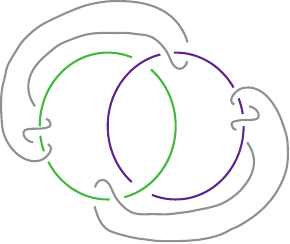}
\end{center}
\caption{A $(2,1)$-essential curve is distinguished from a $(1,2)$-essential curve in the ${\rm H}$-mixed setting.}
\label{tricolore-mixed}
\end{figure}
\end{remarks}

By analogous arguments as for the theory of spatial ${\rm O}$-mixed multi-knotoids and annular knotoids (see Theorem~\ref{isopmixed}), one can prove that a multi-knotoid $K$ in the thickened torus is represented unambiguously by a spatial ${\rm H}$-mixed multi-knotoid ${\rm H} \cup K$. Namely:

\begin{theorem}\label{H-isopmixed}
Rail isotopy classes of multi-knotoids in the thickened torus are in bijective correspondence with rail isotopy classes of spatial ${\rm H}$-mixed multi-knotoids.
\end{theorem}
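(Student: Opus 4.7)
The plan is to mimic the proof of Theorem~\ref{isopmixed} for the annular case, with the unknot ${\rm O}$ replaced by the Hopf link ${\rm H}$. The central identification is the classical homeomorphism $T^2 \times I \cong S^3 \setminus N({\rm H})$, where $N({\rm H})$ denotes an open tubular neighborhood of a flat Hopf link ${\rm H}$ in $S^3$. This presents $T^2 \times I$ as the Hopf link complement and allows one to pass between an \emph{intrinsic} description (a multi-knotoid in $T^2 \times I$ with two piercing rail segments) and an \emph{extrinsic} one (a spatial multi-knotoid in $S^3$ carrying a pointwise-fixed copy of ${\rm H}$).

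First I would construct the forward map. Given a rail isotopy class of a multi-knotoid $K$ in $T^2 \times I$, together with its two rail segments, embed $T^2 \times I$ into $S^3$ as the complement of $N({\rm H})$. Under this embedding, $K$ together with its rails becomes a collection of curves in $S^3 \setminus {\rm H}$, with the two rail segments extending, canonically up to ambient isotopy of $S^3$ fixing ${\rm H}$, to full rails. Adding ${\rm H}$ pointwise produces the spatial ${\rm H}$-mixed multi-knotoid ${\rm H} \cup K$ in the sense of Definition~\ref{def:H-mixed}. The backward map is the inverse operation: remove an open neighborhood of ${\rm H}$ from $S^3$ and truncate the rails at the boundary torus, recovering a multi-knotoid in $T^2 \times I$. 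These operations are mutually inverse at the level of geometric representatives.

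Next I would verify that both maps descend to rail isotopy classes. A rail isotopy of $K$ in $T^2 \times I$ -- an ambient isotopy in the complement of the rails, with endpoints allowed to slide along the rails and with rail shifts up to isotopy -- extends, via the homeomorphism $T^2 \times I \cong S^3 \setminus {\rm H}$, to an ambient isotopy of $S^3$ that keeps ${\rm H}$ pointwise fixed and preserves the rails as a set; this is precisely a rail isotopy of ${\rm H} \cup K$. The converse direction restricts a rail isotopy of ${\rm H} \cup K$ fixing ${\rm H}$ pointwise to an ambient isotopy of $T^2 \times I$ in the complement of the rails. Working in the PL category, this reduces to matching $\Delta$-moves on either side, exactly as in the proof of Theorem~\ref{isopmixed} and the general correspondence for mixed links~\cite{LR1, La2}.

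The main obstacle is ensuring that the extension of the two rail segments to full rails in $S^3 \setminus {\rm H}$ is unambiguous, and that isotopies in $T^2 \times I$ really translate into ambient isotopies of $S^3$ that fix ${\rm H}$ pointwise. This is exactly the subtlety flagged in Remarks~\ref{torus_no_surjection}: without a fixed reference encoding the topology of $T^2 \times I$, the plain inclusion of the thickened torus into a 3-ball fails to yield a well-defined map, since isotopies below the rail segments can induce forbidden moves upon projection. Keeping ${\rm H}$ pointwise fixed rigidifies the ambient topology, so the rail extension is canonical up to rail shifts, and any isotopy ``going around the torus'' is realized as a genuine ambient isotopy of $S^3 \setminus {\rm H}$. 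Once this step is settled, bijectivity of the induced map on rail isotopy classes is immediate by construction.
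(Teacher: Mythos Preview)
Your proposal is correct and follows essentially the same approach as the paper, which simply states that the result follows ``by analogous arguments as for the theory of spatial ${\rm O}$-mixed multi-knotoids and annular knotoids (see Theorem~\ref{isopmixed})'' without spelling out the details. Your version is in fact more thorough than the paper's, since you explicitly identify $T^2 \times I$ with the Hopf link complement, construct the forward and backward maps, and address head-on the rail-extension subtlety flagged in Remarks~\ref{torus_no_surjection}---observing correctly that the pointwise fixity of ${\rm H}$ is precisely what makes the extension canonical and removes the obstruction.
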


For mixed knotoids representing knotoids in the thickened torus, we first establish an analogous set of Reidemeister moves as in the case of knotoids in the thickened annulus. These moves account for the interactions between the fixed part, the Hopf link, and the moving part, i.e. the multi-knotoid. Note that, in contrast to the theory of knotoids in the thickened annulus, the fixed part of the mixed knotoids now involves a crossing, which a moving strand can freely cross, resembling a mixed Reidemeister 3 move (for an illustration see Figure~\ref{mr3}).

\begin{figure}[H]
\begin{center}
\includegraphics[width=2.1in]{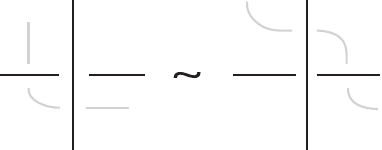}
\end{center}
\caption{An  ${\rm H}$-mixed R3 move.}
\label{mr3}
\end{figure}

\begin{definition}\label{def:H-mixed-diagram}
An ${\rm H}$-\textit{mixed multi-knotoid diagram} is a regular projection ${\rm H}\cup D$ of a spatial ${\rm H}$-mixed multi-knotoid ${\rm H}\cup K$ on the plane of ${\rm H}$, which is considered perpendicular to the rails. The double points are crossings, which are either crossings of arcs of the moving part or \textit{mixed crossings} between arcs of the moving and the fixed part, or the fixed crossing between arcs of the fixed part, and are endowed with over/under information.  
\end{definition}

To establish a diagrammatic equivalence, we note that the endpoints of a toroidal multi-knotoid, which is represented as an ${\rm H}$-mixed multi-knotoid, are permitted to pass over or under the arcs of the ${\rm H}$-part, contrasting with the restrictions imposed in standard knotoid theory (recall the forbidden moves illustrated in Figure~\ref{forbidoid}). Figure~\ref{mixedfm} illustrates this interaction between the fixed and moving parts of an ${\rm H}$-mixed multi-knotoid diagram. 
We shall call these allowed moves {\it endpoint moves}. 

\begin{figure}[H]
\begin{center}
\includegraphics[width=5.7in]{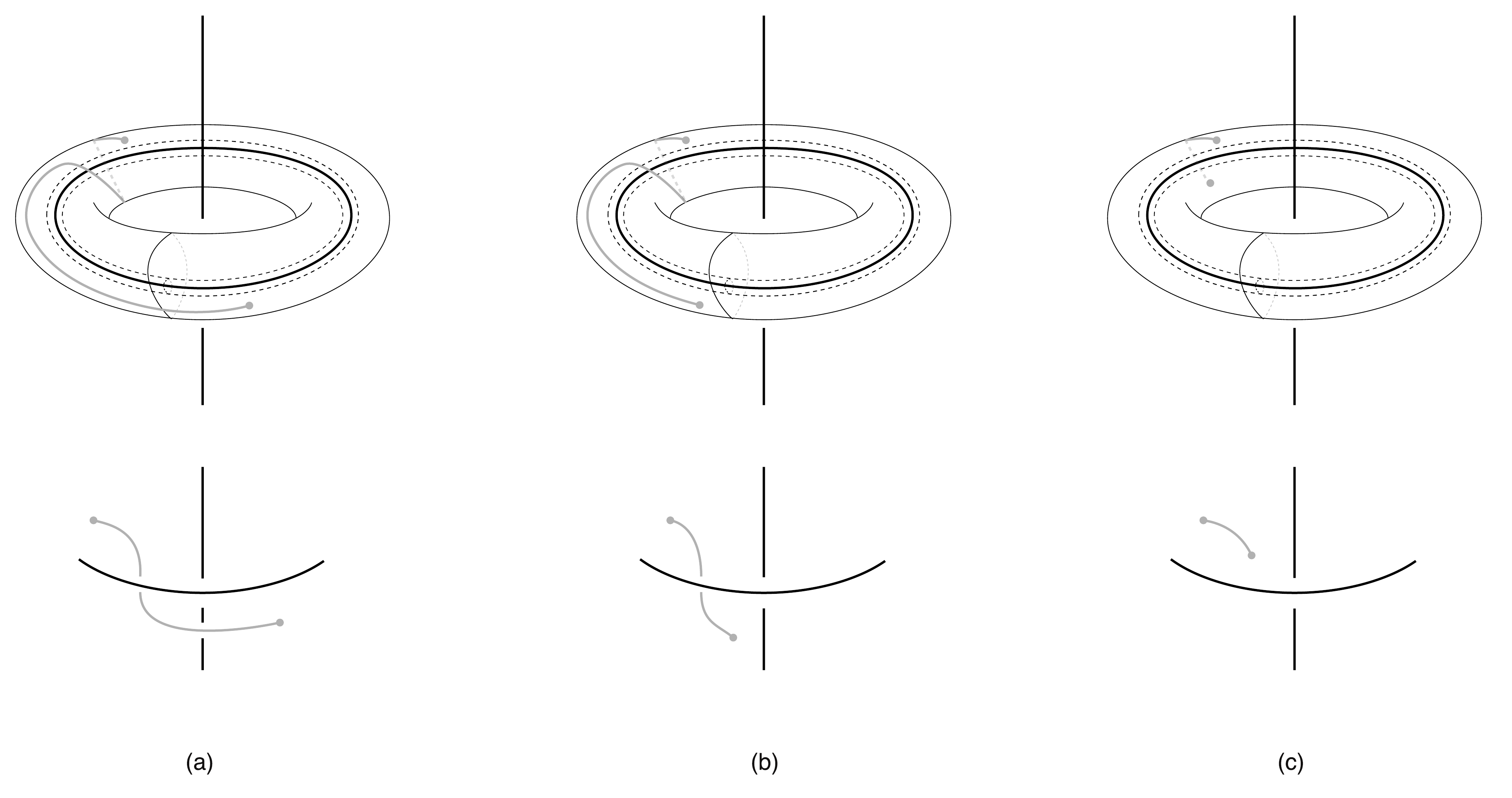}
\end{center}
\caption{The endpoint moves in the ${\rm H}$-mixed setting.}
\label{mixedfm}
\end{figure}

\begin{definition}\label{def:H-mixed-diagram-equiv}
Two (oriented) ${\rm H}$-mixed multi-knotoid diagrams are said to be {\it ${\rm H}$-mixed equivalent} if they differ by planar isotopies (which include the endpoint swing moves), a finite sequence of the classical Reidemeister moves (recall Figure~\ref{rmoves}) for their moving parts and that take place away from the endpoints of the mixed knotoid, the mixed Reidemeister moves  that involve the fixed and the moving parts (Figure~\ref{mreidm}), the mixed R3 move (Figure~\ref{mr3}) and the endpoint moves (Figure~\ref{mixedfm}) that allow the endpoints of the knotoid to pass over or under the fixed part of the mixed knotoid. An equivalence class of an ${\rm H}$-mixed multi-knotoid diagram shall be referred to as {\it ${\rm H}$-mixed multi-knotoid}. 
\end{definition}

Combining the equivalence of planar multi-knotoid diagrams (recall Section~\ref{sec:sphere-plane}) and the theory of mixed links (\cite{LR1, La2}) we obtain the discrete diagrammatic equivalence of spatial ${\rm H}$-mixed multi-knotoids, whose proof follows the same reasoning as for spatial ${\rm O}$-mixed multi-knotoids:

\begin{theorem} \label{Hmixedreid}
[The ${\rm H}$-mixed Reidemeister equivalence]\label{H-reidplink}
Two (oriented) spatial ${\rm H}$-mixed multi-knotoids are rail isotopic if and only if any two (oriented) ${\rm H}$-mixed multi-knotoid diagrams of theirs are ${\rm H}$-mixed equivalent.
\end{theorem}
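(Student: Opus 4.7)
The plan is to adapt the argument for Theorem~\ref{Omixedreid} to the ${\rm H}$-mixed setting, working throughout in the piecewise-linear category so that a rail isotopy decomposes into a finite sequence of $\Delta$-moves on the moving part (keeping ${\rm H}$ pointwise fixed and taking place in the complement of the two rails), slidings of endpoints along their rails, and rail shifts. By Theorem~\ref{H-isopmixed}, the statement amounts to verifying that the projections of these elementary moves onto the plane of ${\rm H}$, chosen perpendicular to the rails, coincide with the generators of ${\rm H}$-mixed equivalence listed in Definition~\ref{def:H-mixed-diagram-equiv}, and conversely that every such diagrammatic generator lifts to a rail isotopy.

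For the forward direction, I would project each elementary rail-isotopy step via a general-position argument in the spirit of \cite[Theorem 2.1]{GK}. A $\Delta$-move on the moving part will translate into one of: a planar isotopy, a classical Reidemeister move R1, R2 or R3 performed on the moving part away from its endpoints, a mixed Reidemeister move as in Figure~\ref{mreidm} when the triangle transversely meets a coplanar arc of ${\rm H}$, the mixed R3 move of Figure~\ref{mr3} when the triangle straddles the fixed crossing of ${\rm H}$, or an endpoint move of Figure~\ref{mixedfm} when the triangle involves one of the rails together with an arc of ${\rm H}$. Endpoint slidings can be sufficiently localized near the rails to project trivially, while rail shifts project to swing moves of the endpoints. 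Each of the resulting diagrammatic moves is a generator of ${\rm H}$-mixed equivalence.

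For the converse direction, I would lift each generator of ${\rm H}$-mixed equivalence to a rail isotopy: planar isotopies and classical Reidemeister moves of the moving part lift to $\Delta$-moves confined to the complement of the rails and of ${\rm H}$; mixed Reidemeister moves lift to $\Delta$-moves whose triangles transversely meet arcs of the fixed part inside the local 3-ball housing a fixed arc; the mixed R3 move of Figure~\ref{mr3} lifts inside the small 3-ball containing the fixed crossing of ${\rm H}$, where the two strands of ${\rm H}$ and the transverse strand of $K$ bound a spatial triangle whose interior is disjoint from all other arcs; and endpoint moves lift to isotopies in which the endpoints remain attached to their rails while the rails locally pass on the appropriate side of ${\rm H}$, which is permitted because the rails themselves are not constrained to avoid the fixed Hopf link. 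Finally, endpoint swing moves of the diagram lift to rail shifts in three-space.

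The main obstacle I anticipate is the treatment of the mixed R3 move, which is genuinely new relative to the ${\rm O}$-mixed setting because two of the three strands in the defining $\Delta$-triangle now belong to the fixed part ${\rm H}$. Verifying that such a triangle can always be chosen small enough to be confined to the 3-ball containing the fixed crossing, while avoiding all other arcs of the moving and fixed parts, requires a careful general-position argument and relies on the fact that the fixed crossing was arranged to sit in such a local 3-ball in the first place. A related subtlety is to confirm that no lifted sequence of diagrammatic moves inadvertently realizes a forbidden move of Figure~\ref{forbidoid} under projection; this should follow from the observation that the endpoint moves of Figure~\ref{mixedfm} permit only the endpoints, attached to the rails, and not arbitrary strands of the moving part, to cross arcs of the fixed sublink ${\rm H}$.
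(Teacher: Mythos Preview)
Your proposal is correct and follows essentially the same approach as the paper, which simply declares that the proof ``follows the same reasoning as for spatial ${\rm O}$-mixed multi-knotoids'' (i.e.\ Theorem~\ref{Omixedreid}); you have in fact supplied considerably more detail than the paper does, explicitly singling out the new mixed R3 move at the fixed crossing of ${\rm H}$ and treating the converse direction. One small terminological caution: in the lift of the mixed R3 move the $\Delta$-triangle is bounded by the old and new segments of the moving strand of $K$, with the two fixed strands of ${\rm H}$ passing through its interior, rather than the three strands jointly ``bounding'' the triangle as your phrasing in the converse paragraph suggests.
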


\begin{remark}
Theorem~\ref{Hmixedreid} also applies to the case of linkoids. The definitions and moves extend naturally to linkoids, and the same Reidemeister equivalence holds for their spatial ${\rm H}$-mixed representations.
\end{remark}

In the above so far, we lifted toroidal knotoid diagrams in the thickened torus and we related knotoids in the thickened torus to spatial ${\rm H}$-mixed multi-knotoids and, subsequently, to planar ${\rm H}$-mixed multi-knotoids. Then, Theorems~\ref{isopTT},~\ref{H-isopmixed} and~\ref{Hmixedreid} combine into the following diagrammatic equivalence of the two different settings.

\begin{theorem}\label{toroidaltoHmixed}
Two toroidal multi-knotoid diagrams are Reidemeister equivalent if and only if any two corresponding planar ${\rm H}$-mixed multi-knotoid diagrams of theirs are ${\rm H}$-mixed Reidemeister equivalent.
\end{theorem}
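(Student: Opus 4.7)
The proof is to chain the three bijective correspondences already established earlier in this section, namely Theorem~\ref{isopTT}, Theorem~\ref{H-isopmixed}, and Theorem~\ref{Hmixedreid}. Given any two toroidal multi-knotoid diagrams $D_1, D_2$ on $T^2$, the strategy is to pass to their lifts in the thickened torus $T^2 \times I$, then to reinterpret these lifts in the complement of a pointwise fixed Hopf link ${\rm H}$ in $S^3$, and finally to project these spatial ${\rm H}$-mixed multi-knotoids to planar ${\rm H}$-mixed multi-knotoid diagrams.

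For the forward implication, suppose $D_1$ and $D_2$ are Reidemeister equivalent as toroidal multi-knotoid diagrams. By Theorem~\ref{isopTT}, their lifts $K_1, K_2 \subset T^2 \times I$ are rail isotopic. Using the identification $T^2 \times I \cong S^3 \setminus {\rm H}$, Theorem~\ref{H-isopmixed} converts this rail isotopy into a rail isotopy of the corresponding spatial ${\rm H}$-mixed multi-knotoids ${\rm H} \cup K_1$ and ${\rm H} \cup K_2$, keeping ${\rm H}$ pointwise fixed. Finally, by Theorem~\ref{Hmixedreid}, any two planar ${\rm H}$-mixed multi-knotoid diagrams obtained by projection of these rail-isotopic spatial mixed objects are ${\rm H}$-mixed equivalent. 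The converse is obtained by reversing each step of the chain, since all three theorems assert genuine bijections of equivalence classes.

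The only real technical point to keep in mind is that every equivalence move on one side must be absorbed into an equivalence move on the other side, with no move being lost or created artificially. In particular, one must check that the longitudinal and meridional toroidal moves from Definition~\ref{def:toroidal-moves} correspond on the ${\rm H}$-mixed side to combinations of planar isotopies, mixed Reidemeister moves, the mixed R3 move through the fixed crossing of ${\rm H}$, and the endpoint moves of Figure~\ref{mixedfm}; conversely, that the mixed R3 move and the endpoint moves do not translate back to any forbidden operation on $T^2$. Since this bookkeeping is entirely carried by the three cited theorems, the result is simply their composition and requires no new topological content.
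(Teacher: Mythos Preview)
Your proof is correct and follows exactly the approach the paper takes: the paper itself presents this theorem without an explicit proof, stating only that ``Theorems~\ref{isopTT},~\ref{H-isopmixed} and~\ref{Hmixedreid} combine into the following diagrammatic equivalence of the two different settings.'' Your write-up simply makes this composition explicit, which is a faithful (and slightly more detailed) rendering of the paper's intended argument.
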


\begin{remark}
One of the primary motivation behind the extension of the theory of knotoids to the thickened torus is the study of {\it DP-tangloids}, that is, the universal cover of a multi-linkoid embedded in the thickened torus (see \cite{DLM} and \cite{DLM3}). For an example, see Figure~\ref{DP-multilinkoid}. Our aim is to provide the theoretical framework for interdisciplinary applications of DP-tangloids in materials science, in molecular chemistry and in biology, to mention some.

\begin{figure}[H]
\begin{center}
\includegraphics[width=6in]{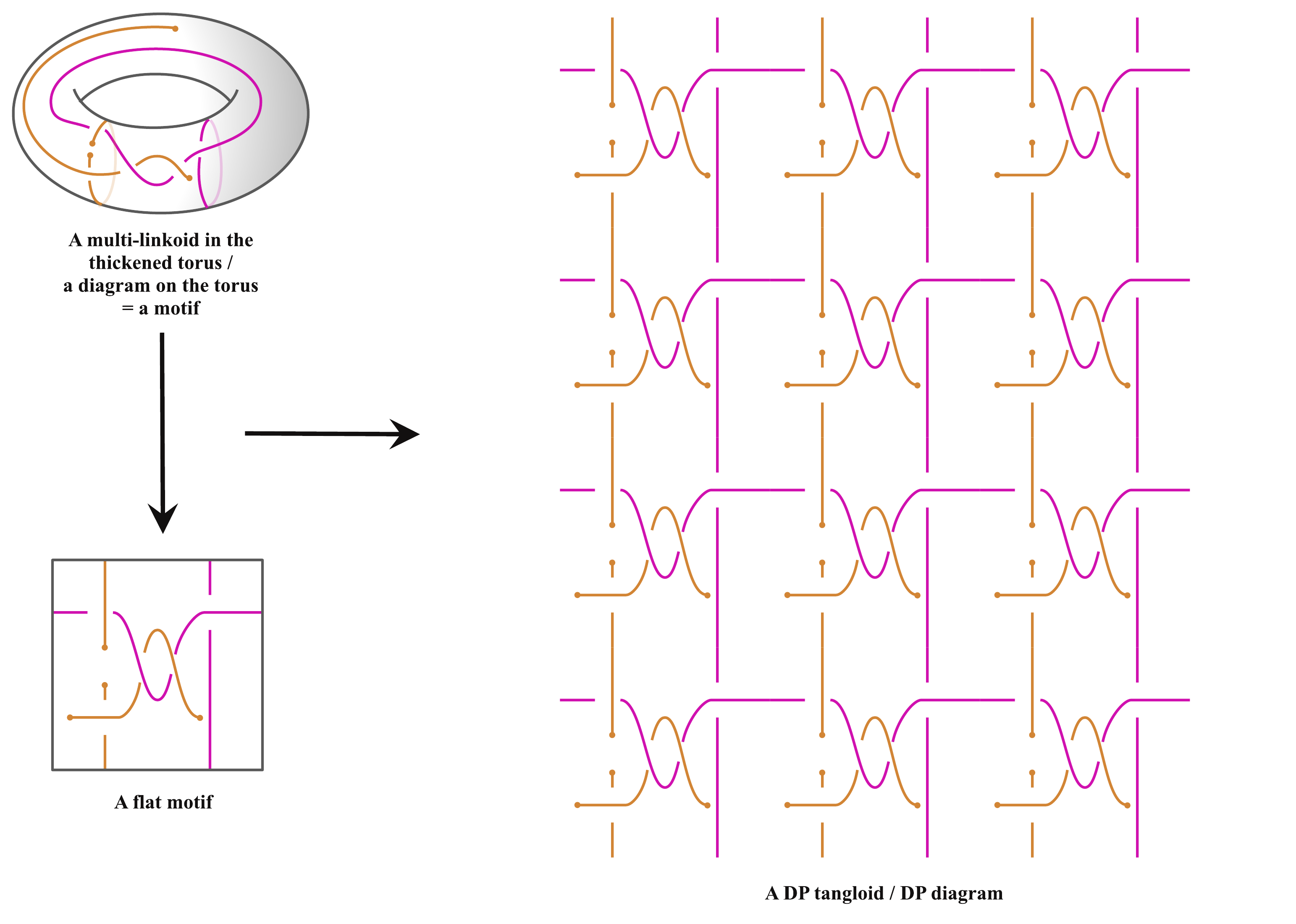}
\end{center}
\caption{DP tangloid.}
\label{DP-multilinkoid}
\end{figure}

\end{remark}

\section{The (universal) bracket polynomial for toroidal knotoids}\label{sec:tor-bracket} 

Recall that Turaev defines the analogue of the classical Kauffman bracket polynomial for multi-knotoids in any oriented surface, where any loop, essential, null-homotopic or nesting the trivial knotoid, gets the  same value $d$ \cite{T}. In this section we extend the bracket polynomials as well as the universal bracket polynomials for planar and annular multi-knotoids to toroidal  multi-knotoids by distinguishing these different classes of components, as respectively defined in Definitions~\ref{pkaufb} and~\ref{u-planarskein} and Definitions~\ref{pkaufbst} and~\ref{annularskein}.

\smallbreak
Consider a toroidal multi-knotoid diagram and apply to its crossings smoothings as presented in \cite{T} and as illustrated in Figure~\ref{smoothing}. After smoothing all crossings we arrive at a {\it state diagram} which may contain torus knots or torus links (recall Subsection~\ref{sec:toroidal-equivalence}), null-homotopic unknots and non-essential unknots enclosing the trivial knotoid. 

A generic {\it state} of a toroidal multi-knotoid diagram  may contain a number $l$ of $(p,q)$-torus knots, a number $m$ of non-essential unknots enclosing the trivial knotoid, a number $k$ of null-homotopic unknots, and the trivial knotoid (in the case that it is not enclosed by non-essential unknots, i.e. $m=0$). View an abstract example in Figure~\ref{fig:toroidal-turaev}. 

\begin{figure}[H]
\begin{center}
\includegraphics[width=5.2in]{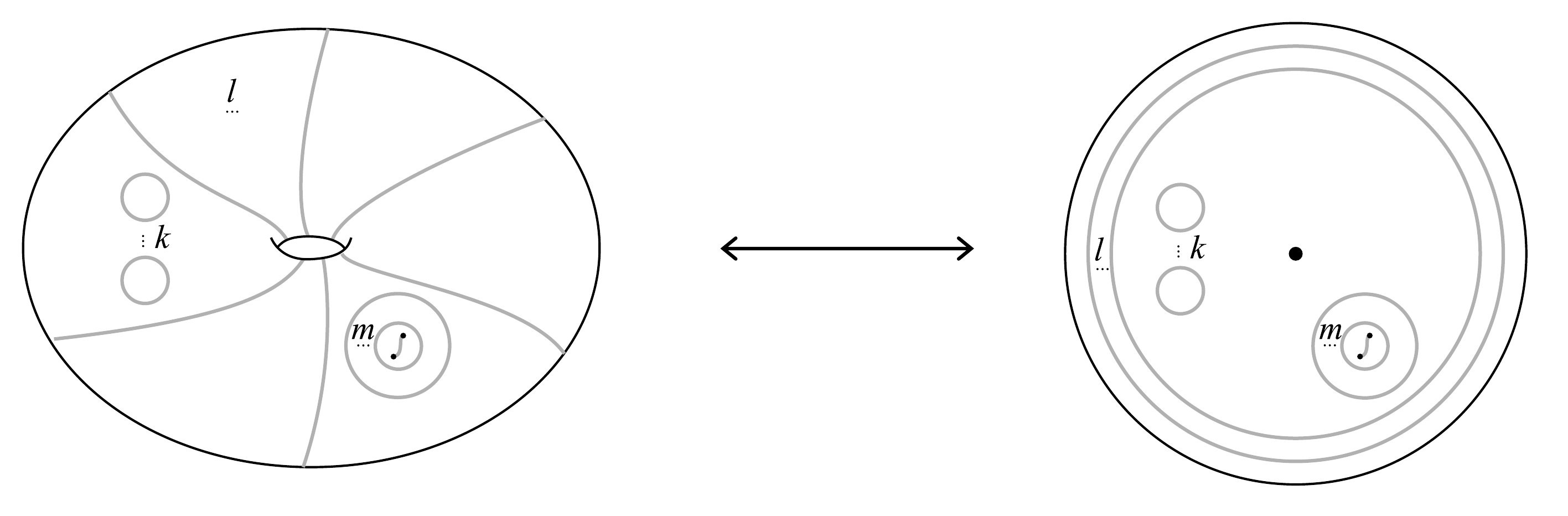}
\end{center}
\caption{Generic state of the toroidal bracket and its abstraction in a punctured disk for visualization purpose.}
\label{fig:toroidal-turaev}
\end{figure}

We first observe that all $k$ null-homotopic unknots can be enclosed in a disc not intersecting other arcs, placed anywhere in the torus, by the Reidemeister equivalence. We further observe that the relative positions of the $m$ non-essential unknots enclosing the trivial knotoid, for $m\geq 0$, are interchangeable by Reidemeister equivalence.

Let us now discuss the position of the trivial knotoid in a state diagram, enclosed or not by non-essential unknots. We denote this local disc formation as $K$. If $l=1$, then $K$ can lie anywhere in the complement of the torus knot in $T^2$. Otherwise, $K$ will be trapped in the essential ribbon defined by two adjacent components of the torus link, due to the forbidden moves. Observe, finally, that these two boundary components of the essential ribbon may be any pair of the $l$ components of the torus link since, by the Reidemeister equivalence, any two adjacent components not enclosing $K$ may interchange their positions.

\subsection{The  toroidal bracket polynomial}

We are now ready to introduce:
 
\begin{definition}\label{pkaufbtt}\rm
The {\it toroidal bracket polynomial} of a toroidal (multi)-knotoid diagram $K$, $\langle {\rm H}\cup K \rangle$, is defined by means of the following rules, inductively on the number of crossings in the diagram and closed components in its state:
\begin{itemize}
\item[i.] $\langle \ \raisebox{-2pt}{\includegraphics[scale=0.8]{4-1-1.pdf}} \ \rangle = A \langle \ \raisebox{0pt}{\includegraphics[scale=0.5]{skein-2.pdf}} \ \rangle + A^{-1} \langle \ \raisebox{-2.5pt}{\includegraphics[scale=0.6]{skein-3.pdf}} \ \rangle$
   \vspace{.1cm}  
\item[ii.]  $\langle \ \raisebox{-2pt}{\includegraphics[scale=0.85]{4-1-2.pdf}} \ \rangle = A^{-1} \langle \ \raisebox{0pt}{\includegraphics[scale=0.5]{skein-2.pdf}} \ \rangle + A \langle \ \raisebox{-2.5pt}{\includegraphics[scale=0.6]{skein-3.pdf}} \ \rangle $
  \vspace{.1cm}
    \item[iii.]$\langle L \sqcup \, \mathrm{O}^k \rangle  = d^k \, \langle L \rangle$, where $d = -A^2-A^{-2}$ and $\mathrm{O}^k$ stands for $k$ null-homotopic unknots.
   \vspace{.1cm}
     \item[iv.] $\langle  \raisebox{-9pt}{\includegraphics[scale=0.05]{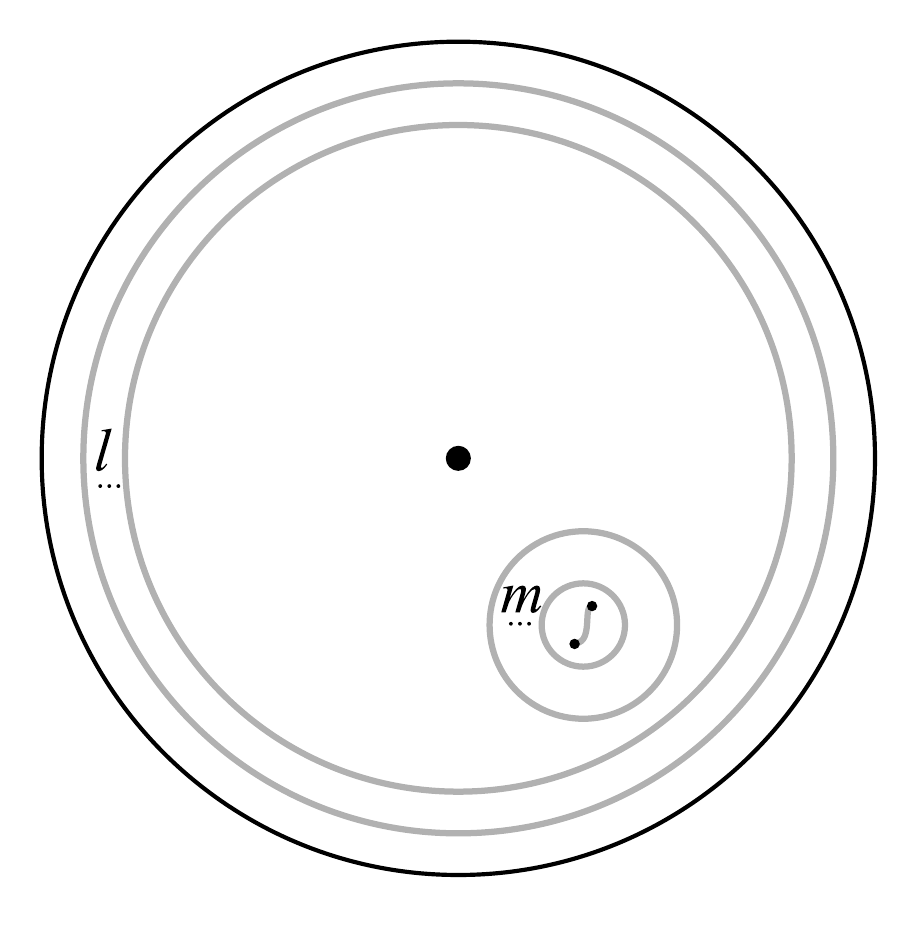}} \rangle  = s_{p,q}^l \ v^m  \langle \raisebox{-1pt}{\includegraphics[scale=0.1]{arc.pdf}} \rangle$ ,  where $l$ is the number of components of an $(lp,lq)$ torus link, $m$ is the number of non-essential unknots enclosing the trivial knotoid   (see Figure~\ref{fig:toroidal-turaev-zoom} for an enlarged picture).
  \vspace{.1cm} 
     \item[v.] $\langle \raisebox{-1pt}{\includegraphics[scale=0.1]{arc.pdf}} \rangle = 1$.
\end{itemize}
\end{definition}

\begin{figure}[H]
\begin{center}
\includegraphics[width=1.7in]{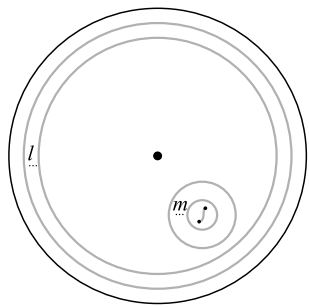}
\end{center}
\caption{A generic state for a toroidal multi-knotoid diagram after removing the null-homotopic components.}
\label{fig:toroidal-turaev-zoom}
\end{figure}

Setting $s_{p,q}=v=d$ \, the toroidal bracket polynomial recovers the Turaev bracket for surface $\Sigma = \mathcal{A}$~\cite{T}.

\begin{theorem}\label{th:pkaufbtt}
The toroidal bracket polynomial of an toroidal (multi)-knotoid diagram $L$ is a well defined infinite variable Laurent polynomial $ \langle L \rangle  \in \mathbb{Z}[A^{\pm 1}, v, s_{p,q}]$, for $p$ and $q$ coprime integers with $q$ non negative, or $(p, q) \in \{(1,0), (0, 1)\}$. Moreover, the toroidal bracket polynomial is a regular isotopy invariant, i.e. invariant under Reidemeister moves R2 and R3.
\end{theorem}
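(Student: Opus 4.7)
The plan is to follow the strategy used in the proof of Theorem~\ref{th:pkaufbst}, leaning heavily on the generic-state analysis carried out in the paragraphs that immediately precede Definition~\ref{pkaufbtt}. Two things must be checked: first, that applying rules (i)--(v) to an arbitrary toroidal multi-knotoid diagram $L$ yields the same Laurent polynomial regardless of the order in which crossings are smoothed and closed components are consumed; and second, that the resulting polynomial is invariant under Reidemeister moves R2 and R3 performed away from the endpoints.

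For well-definedness I would argue as follows. The smoothing rules (i) and (ii) act locally inside small $3$-balls, so via the inclusion $\iota:\mathcal{K}(D^2)\hookrightarrow\mathcal{K}(T^2)$ of Proposition~\ref{disctorus} the independence of the order of crossing resolutions is inherited from the analogous fact for the planar bracket established in Theorem~\ref{th:universal-bracket-regular-isotopy}. Each resulting state then has the form identified before Definition~\ref{pkaufbtt}: $k$ null-homotopic unknots, some essential closed curves, $m$ non-essential unknots nesting the trivial knotoid, and the trivial knotoid itself. The crucial point for rule (iv) is that any two disjoint essential simple closed curves in $T^2$ are necessarily isotopic, hence all essential components of a given state are mutually parallel and share a common homology class in $H_1(T^2;\mathbb{Z})$; normalising this class with $q\geq 0$ (together with the boundary conventions $(1,0)$ and $(0,1)$) resolves the $(p,q)\sim(-p,-q)$ identification of Remark~\ref{rem:essentialpq} and exhibits the essential part uniquely as an $(lp,lq)$-torus link of $l$ components. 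The forbidden moves at the endpoints lock the trivial knotoid and its nest of $m$ enclosing unknots inside a single essential ribbon whenever $l\geq 2$; the specific ribbon is not canonical, but different choices are exchanged by Reidemeister-type equivalences that are invisible to rule (iv) since it records only $l$ and $m$. When $l\leq 1$ the knotoid and its nest may be placed freely in the complement of the torus knot (empty if $l=0$), again without altering the monomial. Rule (iii) then contributes $d^k$ unambiguously, rule (iv) assigns the unique monomial $s_{p,q}^{l}\,v^{m}$, and rule (v) closes the computation, so each state produces a single well-defined monomial.

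Regular isotopy invariance is then essentially formal. The R2 move is handled by the same algebraic calculation as for the classical Kauffman bracket: its two new crossings resolve into four terms, three of which cancel and the fourth of which reproduces the original state after rule (iii) consumes the factor $d=-A^{2}-A^{-2}$; R3 invariance is a standard algebraic consequence of R2 invariance, exactly as in the annular and planar cases. Because R2 and R3 are supported in a $3$-ball disjoint from the essential components and from the endpoints, the combinatorial data $(p,q,l,m)$ recorded by rule (iv) is untouched by them. The main technical delicacy in the whole argument is the $(p,q)$-bookkeeping for rule (iv): one must verify that the homology class of the essential components is a genuine state invariant under the chosen sign convention, that the multiplicity $l$ is intrinsic and does not depend on any reordering of the parallel essential components, and that the trapping/non-trapping dichotomy for the trivial knotoid is handled correctly in both the $l\geq 2$ and $l\leq 1$ regimes. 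The rest of the argument is a routine adaptation of the proof of Theorem~\ref{th:pkaufbst}.
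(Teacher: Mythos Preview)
Your proof is correct and follows essentially the same strategy as the paper's, which likewise reduces well-definedness to the generic-state analysis preceding Definition~\ref{pkaufbtt} and invokes the classical R2/R3 calculation (via rules (i)--(iii) only) for regular isotopy invariance. If anything, you supply more detail than the paper does---most notably the topological observation that disjoint essential simple closed curves on $T^2$ are necessarily parallel, and the explicit handling of the $(p,q)\sim(-p,-q)$ sign normalisation---both of which the paper leaves implicit in the discussion above the definition.
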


\begin{proof} 
For the well-definedness we argue as in the case of the universal planar bracket polynomial (Theorem~\ref{th:universal-bracket-regular-isotopy}):  crossing smoothings use only rules (i)-(ii) for reducing to states, so independence of the sequence of crossings follows immediately as for the Turaev bracket for surface $\Sigma = T^2$. Then we are left with a number of null-homotopic unknots, torus knots and the trivial knotoid with a number of unknots enclosing it, which can be all assumed unordered, since rules (iii) and (iv) do not `see' any ordering.  Now, the forbidden moves ensure that all unknotted components are locked in their positions up to ordering, so there is no ambiguity in applying rule (iii) for the null-homotopic ones. As in the annular case, note that a null-homotopic unknot in $T^2$ may lie in between the essential unknots.  Yet, its position is invisible when applying rule (iii). For this reason, in a generic state all $k$ null-homotopic unknots can be assumed to lie within a local disc in the same region of the diagram. We end up with generic state diagrams consisting of a number of non-essential unknots (torus knots) and the trivial knotoid, as in the right-hand side of  Figure~\ref{fig:toroidal-turaev}. In this reduced state diagram, we are now in a position to apply rule (iv). Note that the relative positions of the unknots are interchangeable, since this is not visible by rule (iv), which only counts their numbers. So the integers $k,l,m$ are fixed in every state. Therefore,  application of rules (iii) and (iv) is unambiguous. Finally rule (v) applies to the trivial knotoid. Hence, the the well-definedness of the toroidal bracket polynomial is concluded.
Finally, the invariance under regular isotopy of the toroidal bracket polynomial involves only rules (i)-(iii) and follows from the same arguments as in the planar and annular cases.
\end{proof}

\begin{remark}
    Unlike the annular case, there is no non-essential unknot that encloses the trivial knotoid in a state of a toroidal multi-knotoid diagrams since the notion of `inner' and `outer' essential components does not hold here. This can be shown by applying a finite sequence of toroidal moves and Reidemeister moves R2 to the components of a torus link in a state diagram, as illustrated in Figure~\ref{no-trap-knotoid-toroidal}.

\begin{figure}[H] 
\begin{center} 
\includegraphics[width=6in]{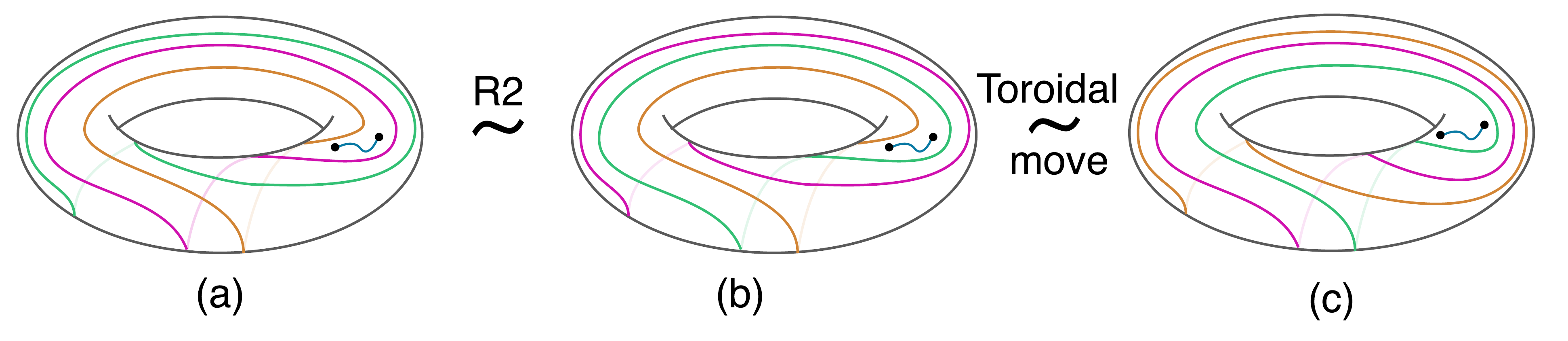} 
\end{center} 
\caption{The trivial knotoid cannot be trapped by essential components in a toroidal state diagram, by the toroidal Reidemeister equivalence.} 
\label{no-trap-knotoid-toroidal} 
\end{figure} 

\end{remark}

\subsection{The universal toroidal bracket polynomial}

In analogy to the planar case, rule (iv) can be generalized to an infinite variable expression:

\begin{definition}\label{toroidalskein}
The {\it universal toroidal bracket polynomial} for toroidal multi-knotoids, denoted $ \langle \cdot \rangle_U $, is defined by means of rules (i), (ii), (iii), (v) of Definition~\ref{pkaufbtt} and rule (iv$^{\prime}$) below:
\begin{center}
iv$^\prime$. \qquad $\langle  \raisebox{-9pt}{\includegraphics[scale=0.05]{toroidal-turaev-zoom.pdf}} \rangle_U = v_m s_{p,q,l} \langle \raisebox{-1pt}{\includegraphics[scale=0.1]{arc.pdf}} \rangle_U$
\end{center}
 where we alternatively  substitute $v^m$ by $v_m$ and $s_{p,q}^l$ by $s_{p,q,l}$, with $m, l$ being non-negative integers. 
\end{definition}

Definition~\ref{toroidalskein} leads to the following result.

\begin{theorem}\label{th:univ-tor}
The universal toroidal bracket polynomial is an infinite variable Laurent polynomial $ \langle \cdot \rangle_U  \in \mathbb{Z}\left[A^{\pm 1}, v_m, s_{p,q,l},\, \ m, l \in \mathbb{N}\cup \{0\}\right]$, that is a regular isotopy invariant for toroidal multi-knotoids, and  realizes the Kauffman bracket skein module of toroidal multi-knotoids.
\end{theorem}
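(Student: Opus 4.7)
The plan is to mirror the proofs of Theorem~\ref{th:pkaufbtt} and Theorem~\ref{th:universal-bracket-regular-isotopy}, modifying the final state-evaluation step so as to track the full combinatorial data $(p,q,l)$ and $m$ rather than just their monomial encodings. First, I would verify well-definedness by running the usual induction on the number of crossings: rules (i)--(ii) produce a finite set of state diagrams, and the independence of the final polynomial on the order in which the crossings are smoothed follows verbatim from the Turaev argument used for the planar and annular brackets, since these rules are purely local and do not see the global topology of $T^2$. At this stage the computation reduces to evaluating a weighted sum over state diagrams that each consist of a bunch of null-homotopic unknots, a torus link, some non-essential unknots nesting the trivial knotoid, and the trivial knotoid itself.

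Next I would argue that the reduction of any such generic state is unambiguous. Rule (iii) accounts for the $k$ null-homotopic unknots and, as in the annular case, their relative positions with respect to the other components are invisible to this rule, so one may assume them to live in a common local disc. The non-trivial point for the universal version is that rule (iv$^\prime$) assigns a distinct variable $s_{p,q,l}$ to each isotopy class of $(lp,lq)$-torus link (with $p,q$ coprime and $q\geq 0$, or $(p,q)\in\{(1,0),(0,1)\}$, as in Theorem~\ref{th:pkaufbtt}). This is well-posed precisely because, as recorded in Remark~\ref{rem:essentialpq}, different essential triples $(p,q,l)$ yield non-equivalent objects inside $T^2$, while within each class the analysis preceding Definition~\ref{pkaufbtt} shows that reorderings of parallel components of the torus link and of the nesting unknots are realized by Reidemeister equivalence. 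The forbidden moves at the two endpoints then lock the integer $m$, so rule (iv$^\prime$) applies unambiguously, and rule (v) closes the computation. Consequently $\langle\cdot\rangle_U$ is a well-defined element of $\mathbb{Z}[A^{\pm 1}, v_m, s_{p,q,l}]$.

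For regular isotopy invariance I would simply observe that the moves R2 and R3 are local and affect only the crossing configuration, so only rules (i)--(iii) enter the calculation. The same diagrammatic argument used for the classical Kauffman bracket, and reused verbatim in the planar and annular cases (Theorems~\ref{th:pl-bracket} and~\ref{th:pkaufbst}), therefore shows that the contributions of the two state sums on either side of an R2 or R3 move agree. Since rules (iv$^\prime$) and (v) act only on final reduced states, which are insensitive to such local alterations, invariance follows.

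Finally, to see that $\langle\cdot\rangle_U$ realizes the Kauffman bracket skein module of toroidal multi-knotoids, I would note that the reduced generic states described above are precisely a generating set for this module, with $k$ null-homotopic unknots already equated to $d^k$ via rule (iii) and the remaining essential states indexed bijectively by the triples $(p,q,l)$ together with the nesting count $m$. Assigning an independent formal variable to each such state, which is exactly what rule (iv$^\prime$) does, produces the universal invariant among those obeying the Kauffman skein relation. I expect the main obstacle to be the last well-definedness check: one must ensure that the indexing set for $s_{p,q,l}$ genuinely separates isotopy classes of essential multicurves in $T^2$ (so that no two state diagrams that should give equal monomials are mistakenly distinguished, and vice versa). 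This rests on the classification of essential simple multicurves on the torus recalled in Subsection~\ref{sec:toroidal-equivalence} together with Remark~\ref{rem:essentialpq}, and on the observation, already used in the proof of Theorem~\ref{th:pkaufbtt}, that parallel copies can always be reordered within a ribbon neighbourhood by Reidemeister moves.
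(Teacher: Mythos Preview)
Your proposal is correct and follows essentially the same approach as the paper: the paper's proof simply states that the argument is analogous to that of Theorem~\ref{th:pkaufbtt} in combination with Theorem~\ref{th:universal-bracket-regular-isotopy}, which is precisely what you have written out in detail. Your expanded version makes explicit the well-definedness via crossing smoothings, the unambiguous evaluation of generic states, the locality argument for R2/R3 invariance, and the skein-module realization, all of which the paper defers to the earlier proofs.
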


\begin{proof}
As with the  universal annular bracket polynomial, the proof of the theorem is completely analogous to the proof of Theorem~\ref{th:pkaufbtt}, in combination with the proof of Theorem~\ref{th:universal-bracket-regular-isotopy} and taking  into account Definition~\ref{toroidalskein}.
\end{proof}

\begin{remark}
As a consequence of Theorem~\ref{th:univ-tor}, the Kauffman bracket skein module of toroidal multi-knotoids is freely generated by the state diagrams illustrated in the left-hand side of  Figure~\ref{fig:toroidal-turaev}.
\end{remark}

\subsection{The reduced toroidal bracket polynomial}

One can also define a simplified version of the toroidal bracket polynomial  that can be obtained by introducing two new variables $x, y$ in place of the infinitely many variables $s_{p,q}$  in the fourth rule. This reduced oroidal bracket polynomial can be useful in computations. More precisely: 

\begin{definition}\label{def:reduced-tor-bracket}
The \textit{reduced toroidal bracket polynomial} $\langle K \rangle_R$ of a toroidal multi-knotoid diagram $K$ is defined inductively by means of the same rules as for the toroidal bracket polynomial except for the fourth rule, in which $s_{p,q}$ is replaced by $x^p y^q$.
\end{definition}

\noindent  Note that for the toroidal, the universal toroidal and the reduced toroidal bracket polynomials, the last rule can only be applied to a toroidal state diagram that contains only a trivial knotoid. We also point out that two torus knot components of a toroidal multi-knotoid diagram, which have different slopes, will necessarily form crossings, which will be smoothed by the skein relations (i) and (ii) of Definitions~\ref{pkaufbtt}, ~\ref{toroidalskein} and ~\ref{def:reduced-tor-bracket}. So, a state diagram of the toroidal bracket polynomial can only contain a single torus knot or link. Therefore, together with the comment above Definition~\ref{pkaufbtt} we have that the toroidal bracket polynomial, the unviversal one  and the reduced one are well defined. In Section~\ref{computing_bracket} we provide some computations of the toroidal bracket polynomial on specific examples.

\begin{theorem}\label{th:pkaufbtt-reduced}
The reduced toroidal bracket polynomial of a toroidal (multi)-knotoid diagram $K$ is a well defined  4-variable Laurent polynomial $ \langle K \rangle_R  \in \mathbb{Z}\left[A^{\pm 1}, v, x^{\pm 1}, y \right]$, for $p$ and $q$ coprime integers with $q$ non negative, or $(p, q) \in \{(1,0), (0, 1)\}$. Moreover, the reduced toroidal bracket polynomial is a regular isotopy invariant, i.e. invariant under Reidemeister moves R2 and R3.
\end{theorem}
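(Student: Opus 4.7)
The plan is to derive Theorem~\ref{th:pkaufbtt-reduced} as a direct specialization of the universal toroidal bracket polynomial established in Theorem~\ref{th:univ-tor}. Specifically, the reduced polynomial is obtained from $\langle \cdot \rangle_U$ via the ring-homomorphism substitution $s_{p,q,l} \mapsto x^{pl}\,y^{ql}$, which converts rule (iv$'$) of Definition~\ref{toroidalskein} into rule (iv) of Definition~\ref{def:reduced-tor-bracket}. Consequently, well-definedness, in the sense of independence of the order in which crossings are smoothed and of the bookkeeping for null-homotopic unknots and nesting unknots, is inherited verbatim from the universal version, and nothing new needs to be checked at the level of the smoothing algorithm itself.

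What does require attention is that the substitution is consistent, i.e.\ that every state produces a uniquely defined monomial in $x$ and $y$. For this I would invoke the normalization imposed in the statement: $(p,q)$ is taken coprime with $q\geq 0$, together with the two exceptional cases $(1,0)$ and $(0,1)$. Combined with Remark~\ref{rem:essentialpq}, which identifies $(p,q)$ with $(-p,-q)$ as an unoriented essential curve on $T^2$, this convention assigns to every isotopy class of essential simple closed curves in $T^2$ exactly one label. Since $p$ can be any integer coprime to $q$, the variable $x$ must carry inverses, whereas $q\geq 0$ forces $y$ to appear only with non-negative exponents, giving the target ring $\mathbb{Z}[A^{\pm 1},\, v,\, x^{\pm 1},\, y]$.

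A second point to verify is that within a given state at most one slope of essential curve occurs, so that the exponent $l$ is unambiguously the total number of parallel torus components. This is the slope-uniqueness observation already noted beneath Definition~\ref{def:reduced-tor-bracket}: two essential simple closed curves of distinct slopes on $T^2$ have nonzero geometric intersection number, so any transverse intersection between them would have produced a crossing that was smoothed by rules (i) or (ii) before the state was reached. Hence every state diagram contains at most one $(p,q)$-torus link, together with some null-homotopic unknots, some non-essential unknots nesting the trivial knotoid, and the trivial knotoid itself, matching the generic form of Figure~\ref{fig:toroidal-turaev}, and so rule (iv) is applied unambiguously.

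Finally, regular isotopy invariance is essentially automatic. The Reidemeister moves R2 and R3 act locally on crossings, and their effect on the bracket is entirely captured by rules (i)--(iii), none of which are altered by the substitution $s_{p,q,l}\mapsto x^{pl}y^{ql}$; the proof then descends from Theorem~\ref{th:pkaufbtt} by naturality of the specialization. The main subtlety in the whole argument is the slope-uniqueness step, which rests on the intersection-theoretic fact for simple closed curves on $T^2$; every other step is a functoriality remark for specializations of the universal toroidal bracket.
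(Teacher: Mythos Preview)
Your proposal is essentially correct and follows the same line as the paper, which in fact states this theorem without a separate proof, relying on the paragraph preceding it (the slope-uniqueness observation) together with the proofs of Theorems~\ref{th:pkaufbtt} and~\ref{th:univ-tor}. One small inaccuracy: the reduced bracket in Definition~\ref{def:reduced-tor-bracket} is defined as a modification of the \emph{ordinary} toroidal bracket (Definition~\ref{pkaufbtt}), replacing $s_{p,q}$ by $x^p y^q$, and hence keeps the single variable $v$ with exponent $v^m$, not the infinite family $v_m$; if you insist on specializing from the universal bracket $\langle\cdot\rangle_U$ you must also send $v_m\mapsto v^m$, which you omitted. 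With that adjustment your specialization argument, slope-uniqueness check, and appeal to rules (i)--(iii) for regular isotopy invariance match the paper's reasoning.
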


\begin{remark} 
If we have a toroidal multi-knotoid diagram that contains no essential closed curves, by using the inclusion of a disc in $T^2$ (see left-hand illustration in Figure~\ref{planarannulartorus}), such toroidal diagrams can be viewed as planar diagrams and resolve into states as in the planar case. In contrast, using the inclusion of $T^2$ in a 3-ball (recall right-hand illustration in Figure~\ref{planarannulartorus}), the $(p,q)$ torus knots become null-homotopic. So, substituting $s_{p,q}^l$ (resp. $s_{p,q,l}$) by $d^{l} = (-A^2-A^{-2})^{l}$, the toroidal (resp. universal toroidal) bracket polynomial specializes to the planar (resp. universal planar) bracket of Definition~\ref{pkaufb} (resp. Definition~\ref{u-planarskein}) for planar knotoids. Moreover, if the essential unknots of a toroidal multi-knotoid diagram are of type $(p,0)$-torus knots, namely longitudinal curves, then by using the inclusions relations between the annulus and the torus, we can substitute $s_{p,q}^n$ by $x^n$  and recover the annular bracket polynomial. The same reasoning applied to the reduced toroidal bracket polynomial for each case.
\end{remark}

\begin{remark}
Like for annular multi-knotoids the notion of a {\it knot type toroidal multi-knotoid}, whereby the endpoints lie in the same diagrammatic region, is not well-defined as explained by the following example: the trivial knotoid can be local and correspond to a null-homotopic unknot. But it can also be isotoped to a $(p,q)$-torus knot with a segment removed, as highlighted in Remark~\ref{trivial_pq}. So, the corresponding knot type in this case would be the $(p,q)$-torus knot. This implies that the under- or over-closure \cite{T} of an annular multi-knotoid is also not well defined. So, the evaluation of the toroidal, universal toroidal and reduced toroidal bracket polynomials of the corresponding under- or over-closure link  in the thickened torus is not well defined. 
\end{remark}

\subsection{State summation formuli for the toroidal bracket polynomials}

In complete analogy to the classical bracket and the spherical, (universal) planar and annular bracket polynomials, one can arrive at an alternative definition of the toroidal bracket, the universal toroidal bracket and the reduced bracket toroidal polynomials via a state summation: 

\begin{definition}
The toroidal bracket, the universal toroidal bracket and the reduced toroidal bracket polynomials of a toroidal multi-knotoid diagram $K$ are defined by means of state summations as: 
\begin{align*}\label{sskbst}
\langle K \rangle\ &= \underset{s\in S(K)}{\sum}\, A^{\sigma_s}\,  d^{k_s}\, v^{m_s} \, s_{p_s,q_s}^{l_s}  \\\\  \langle K \rangle_U\ &= \underset{s\in S(K)}{\sum}\, A^{\sigma_s}\, d^{k_s}\,  v_{m_s} \, s_{p_s,q_s,l_s} 
\\\\  \langle K \rangle_R\ &= \underset{s\in S(K)}{\sum}\, A^{\sigma_s}\, d^{k_s}\,  v^{m_s} \, (x^{p_s} \, y^{q_s})^{l_s} 
\end{align*} 
\noindent where $d= -A^2-A^{-2}$,  $\sigma_s$ is the number of A-smoothings minus the number of B-smoothings applied to the crossings of $K$ in order to obtain the state $s$ (recall Figure~\ref{smoothing}), $k_s$ is the number of null-homotopic unknots in $s$, $m_s$ is the number of non-esssential unknots in $s$ that enclose the trivial knotoid, and $l_s$ is the number of essential unknots (torus knots) in the state $s$, winding $p_s$ times along the torus longitude and $q_s$ times along the torus meridian.
\end{definition}

\subsection{The toroidal Jones polynomials}

As in the case of the planar, annular and universal annular bracket polynomials,  the toroidal, universal toroidal and reduced toroidal bracket polynomials can be normalized so as to also respect Reidemeister move 1, by considering the product of $\langle K \rangle$ , $\langle K \rangle_U$ and $\langle K \rangle_R$ by the writhe of a toroidal multi-knotoid diagram $K$, which is defined as follows.

\begin{definition} \label{def:toroidal_writhe}
The \textit{writhe} of an oriented toroidal multi-knotoid $K$, denoted as $w(K)$, is defined as the number of positive crossings minus the number of negative crossings of $K$ (recall Figure~\ref{si}). 
\end{definition}

\noindent This leads to obtaining the {\it normalized (universal/reduced) toroidal bracket (or Jones) polynomials}.

\begin{theorem}\label{th:jones-tor}
Let $K$ be a toroidal multi-knotoid diagram. The {\rm toroidal Jones}, the {\rm universal toroidal Jones} and the {\rm reduced toroidal Jones polynomials} defined as: 
\[
V(K)\ =\ (-A^3)^{-w(K)}\, \langle K \rangle\ \ \ {\rm ,}\ \ \ V_U(K)\ =\ (-A^3)^{-w(K)}\, \langle K \rangle_U\ \ \ {\rm and}\ \ \ V_R(K)\ =\ (-A^3)^{-w(K)}\, \langle K \rangle_R
\]
\noindent where $w(K)$ is the writhe of $K$,  $A = t^{-1/4}$, $\langle K \rangle$ the toroidal, $\langle K \rangle_U$ the universal and $\langle K \rangle_R$ the reduced toroidal bracket polynomials of $K$, are isotopy invariants of toroidal multi-knotoids.
\end{theorem}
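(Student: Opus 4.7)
The plan is to mimic exactly the classical Kauffman-bracket-to-Jones normalization and the analogous arguments already carried out for the planar and annular Jones polynomials in Theorems~\ref{th:pl-bracket} and~\ref{th:jones-an}. I would proceed in three steps. First, I invoke Theorems~\ref{th:pkaufbtt}, \ref{th:univ-tor} and~\ref{th:pkaufbtt-reduced}, which tell us that $\langle K\rangle$, $\langle K\rangle_U$ and $\langle K\rangle_R$ are already regular isotopy invariants, i.e.\ invariant under Reidemeister moves R2 and R3 as well as under all surface isotopies of $T^{2}$ (including the longitudinal and meridional toroidal moves of Definition~\ref{def:toroidal-moves} and endpoint swing moves). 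The writhe $w(K)$ of Definition~\ref{def:toroidal_writhe} is itself invariant under R2 (which adds or removes a pair of cancelling signs) and R3 (which preserves crossing signs), and is unaffected by surface isotopies that do not introduce or remove crossings. Consequently, each of $V(K)$, $V_U(K)$, $V_R(K)$ is already invariant under R2, R3 and surface isotopies of $T^2$.

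Second, I verify the compensating behavior of the three brackets under R1. Using rules (i)--(iii) of Definition~\ref{pkaufbtt} (which are common to all three variants), a local computation at a positive curl yields
\[
\bigl\langle \raisebox{-3pt}{positive curl}\bigr\rangle_{\bullet} = A\,d\,\bigl\langle\text{straight strand}\bigr\rangle_{\bullet} + A^{-1}\,\bigl\langle\text{straight strand}\bigr\rangle_{\bullet} = -A^{3}\,\bigl\langle\text{straight strand}\bigr\rangle_{\bullet},
\]
and symmetrically $-A^{-3}$ for a negative curl, precisely as in the classical case. Since a positive (resp.\ negative) R1 move increases (resp.\ decreases) $w(K)$ by $1$, the prefactor $(-A^{3})^{-w(K)}$ produces the compensating factor $(-A^{3})^{-1}$ (resp.\ $(-A^3)$) that cancels the bracket change. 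Hence $V(K)$, $V_U(K)$ and $V_R(K)$ are also invariant under R1.

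Third, combining the two steps, the three normalized polynomials are invariant under the full oriented Reidemeister equivalence for toroidal multi-knotoids as defined in Definition~\ref{torpl}, and therefore are isotopy invariants of toroidal multi-knotoids.

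The only point that requires a moment of care — and the only place where the toroidal geometry could conceivably interfere with the classical argument — is the observation that the R1 computation is insensitive to where the curl sits in $T^{2}$ (for instance, in the complementary region of an essential $(p,q)$-torus knot, or adjacent to a non-essential unknot nesting the trivial knotoid). But smoothing a curl produces only a small null-homotopic unknot that is absorbed locally by rule (iii) at the cost of the factor $d=-A^{2}-A^{-2}$, independently of the values of the variables $v$, $v_m$, $s_{p,q}$, $s_{p,q,l}$, $x$, $y$ that appear in rule (iv) or (iv$'$). Thus no genuine obstacle arises, and the theorem reduces to bookkeeping of the regular-isotopy invariance already proved together with the standard R1 normalization.
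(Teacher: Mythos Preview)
Your proposal is correct and is precisely the standard normalization argument that the paper relies on; in fact the paper gives no explicit proof for Theorem~\ref{th:jones-tor} (nor for the analogous Theorems~\ref{th:pl-bracket} and~\ref{th:jones-an}), simply asserting in the preceding sentence that the brackets ``can be normalized so as to also respect Reidemeister move R1, by considering the product of $\langle K\rangle$, $\langle K\rangle_U$ and $\langle K\rangle_R$ by the writhe.'' Your write-up thus supplies the details the paper leaves implicit, and your closing remark that the null-homotopic loop produced by an R1 smoothing is absorbed by rule~(iii) regardless of its position on $T^2$ is exactly the point that justifies carrying the classical computation over to the toroidal setting.
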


\subsection{The ${\rm H}$-mixed bracket  polynomials}

In this subsection, we consider the planar ${\rm H}$-mixed multi-knotoids approach to toroidal multi-knotoids (recall Subsection~\ref{sec:H-mixed}). In view of Theorem~\ref{toroidaltoHmixed}, the toroidal bracket polynomial can be adapted to the setting of ${\rm H}$-mixed multi-knotoids. 

Here the crossings of ${\rm H}$  as well as the mixed crossings are not subjected to skein relations, since ${\rm H}$ represents the thickened torus, so must remain fixed throughout. We further consider the torus links of type $(lp,lq)$, which form the states for the toroidal  bracket polynomial of Definition~\ref{pkaufbtt}, and which we want to correspond to ${\rm H}$-mixed  links. Figure~\ref{Hmixedtorusknot} illustrates a $(3,2)$-torus knot represented in two ways as an ${\rm H}$-mixed link. The left-hand illustration of the figure uses the common representation of ${\rm H}$, while in the right-hand illustration ${\rm H}$ is represented in closed braid form. As we observe, in the left-hand illustration appear three moving crossings, which should be smoothened according to the bracket rules, a situation we can avoid using the representation of the $(3,2)$-torus knot in the right-hand illustration.

\begin{figure}[H]
    \centering
    \includegraphics[width=4.3in]{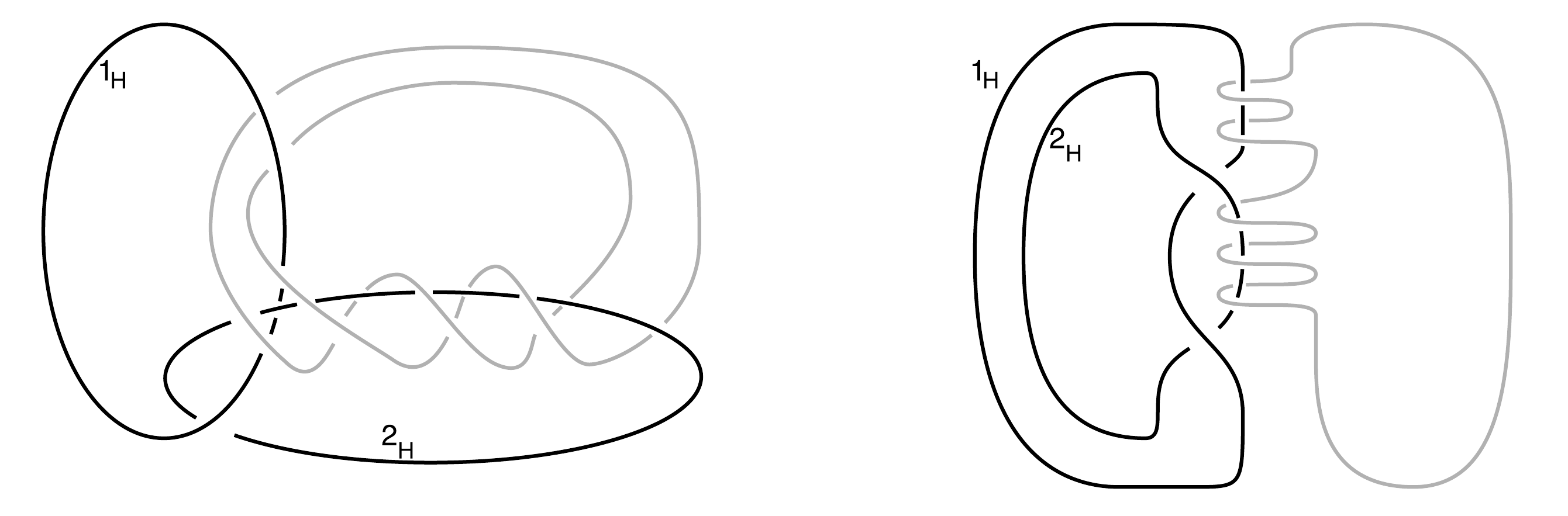}
    \caption{ A $(3,2)$-torus knot represented in two ways as an ${\rm H}$-mixed link.}
    \label{Hmixedtorusknot}
\end{figure}

We are now ready to translate the toroidal bracket polynomial in the ${\rm H}$-mixed setting:

\begin{definition}\label{mix-pkaufbtt}\rm
Let $K$ be a toroidal multi-knotoid diagram and let ${\rm H}\cup K$ the corresponding ${\rm H}$-mixed multi-knotoid diagram, with ${\rm H}$ being represented in closed braid form. The \textit{${\rm H}$-mixed bracket polynomial} $\langle {\rm H}\cup K \rangle$ of ${\rm H}\cup K$ is an infinite variable Laurent polynomial in the ring $\mathbb{Z}\left[A^{\pm 1}, v, s_{p,q} \right]$,  defined by means of the same inductive rules as the ones for the toroidal bracket polynomial, except for the  $(lp,lq)$-torus link diagram in the fourth rule of Definition~\ref{pkaufbtt}, which is substituted by the corresponding ${\rm H}$-mixed link diagram, as illustrated in Figure~\ref{Hmixedtoruslink}. Also, $L$ in the fourth rule stands now for an oriented ${\rm H}$-mixed multi-knotoid  $L \cup {\rm H}$.  We further define the \textit{universal ${\rm H}$-mixed bracket polynomial}  $\langle {\rm H}\cup K \rangle_U$ of ${\rm H}\cup K$  by substituting in the fourth rule $v^m s_{p,q}^l$ by $v_m s_{p,q,l}$. 
\end{definition}

\begin{figure}[H]
    \centering
   \includegraphics[width=2.2in]{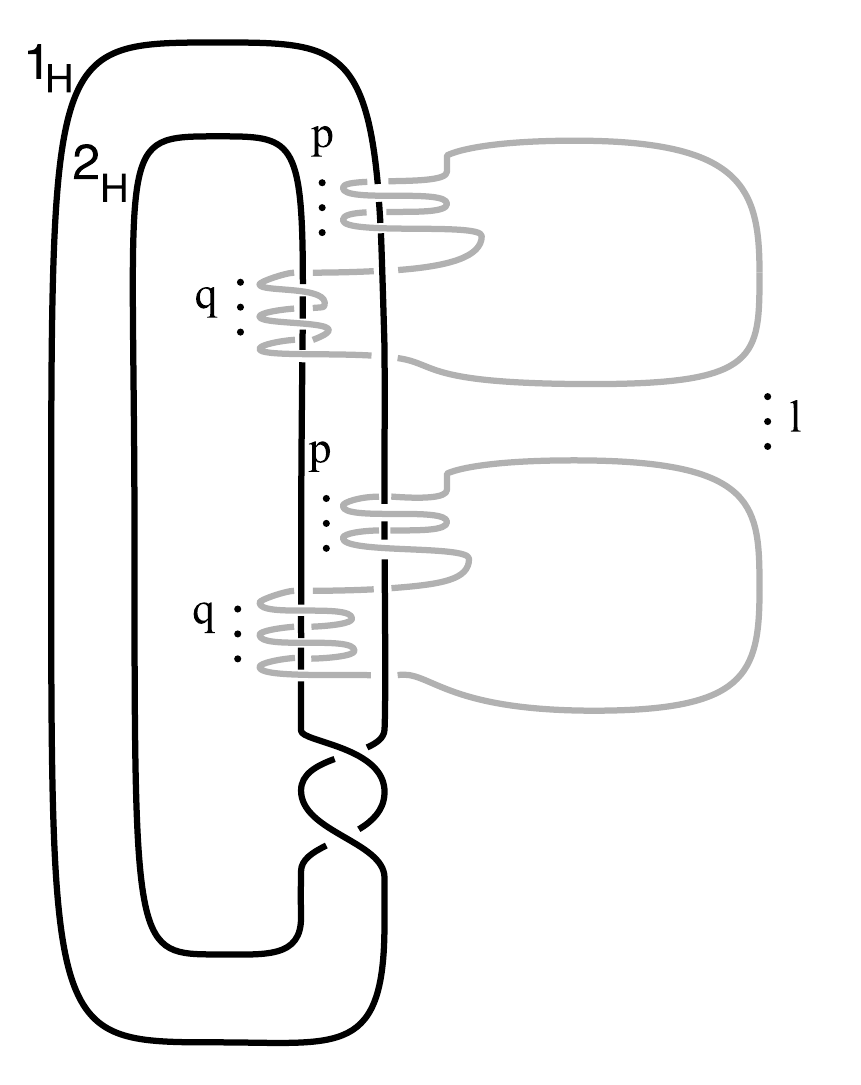}
    \caption{ An ${\rm H}$-mixed link diagram corresponding to the $(lp,lq)$-torus link.}
    \label{Hmixedtoruslink}
\end{figure}

We may also define a simplified version of the ${\rm H}$-mixed bracket polynomial that can  be obtained by introducing two new variables $x$ and $y$ in place of $s_{p,q}$, which can prove handy in computations. More precisely: 

\begin{definition}\label{def:reduced-H-bracket}
Let ${\rm H}\cup K$ be a ${\rm H}$-mixed multi-knotoid diagram, with ${\rm H}$ being represented in closed braid form. The \textit{reduced ${\rm H}$-mixed bracket polynomial}  $\langle {\rm H}\cup K \rangle_R$ of ${\rm H}\cup K$  is a 4-variable Laurent polynomial in the ring $\mathbb{Z}\left[A^{\pm 1}, v, x^{\pm 1}, y \right]$, defined inductively by means of the same rules as for the ${\rm H}$-mixed bracket polynomial, except for the fourth rule, in which $s_{p,q}$ is replaced by $x^p y^q$.
\end{definition}

\begin{remark} 
If we start from an ${\rm H}$-mixed multi-knotoid and do all smoothings of moving crossings, we would arrive at diagrams where different types of essential torus knots may be present, which do not cross in the mixed setting,  as illustrated in the left hand side of Figure~\ref{state-Hmixed}, and not necessarily copies of the same $(p,q)$ torus knot, as depicted in Figure~\ref{Hmixedtoruslink}.  Diagrams of this form  have been used in the braid approach to skein modules of various 3-manifolds.  For details cf. \cites{D2} and  references therein.   So, we would have states as illustrated in the right hand side of Figure~\ref{state-Hmixed} including also null-homotopic components. In other words,  diagrams as in Figure~\ref{Hmixedtoruslink} comprise only a subset of the complete theory of ${\rm H}$-mixed links. The above reasoning would lead eventually to an alternative  definition of the ${\rm H}$-mixed bracket polynomial with extended initial conditions. More precisely, we would define the \textit{alternative ${\rm H}$-mixed bracket polynomial}  $\langle {\rm H}\cup K \rangle$ of ${\rm H}\cup K$  by means of rules (i), (ii), (iii), (v) of Definition~\ref{pkaufbtt} and rule (iv$^{\prime}$) below:

\begin{center}
iv$^\prime$. \qquad $\langle  \raisebox{-9pt}{\includegraphics[scale=0.05]{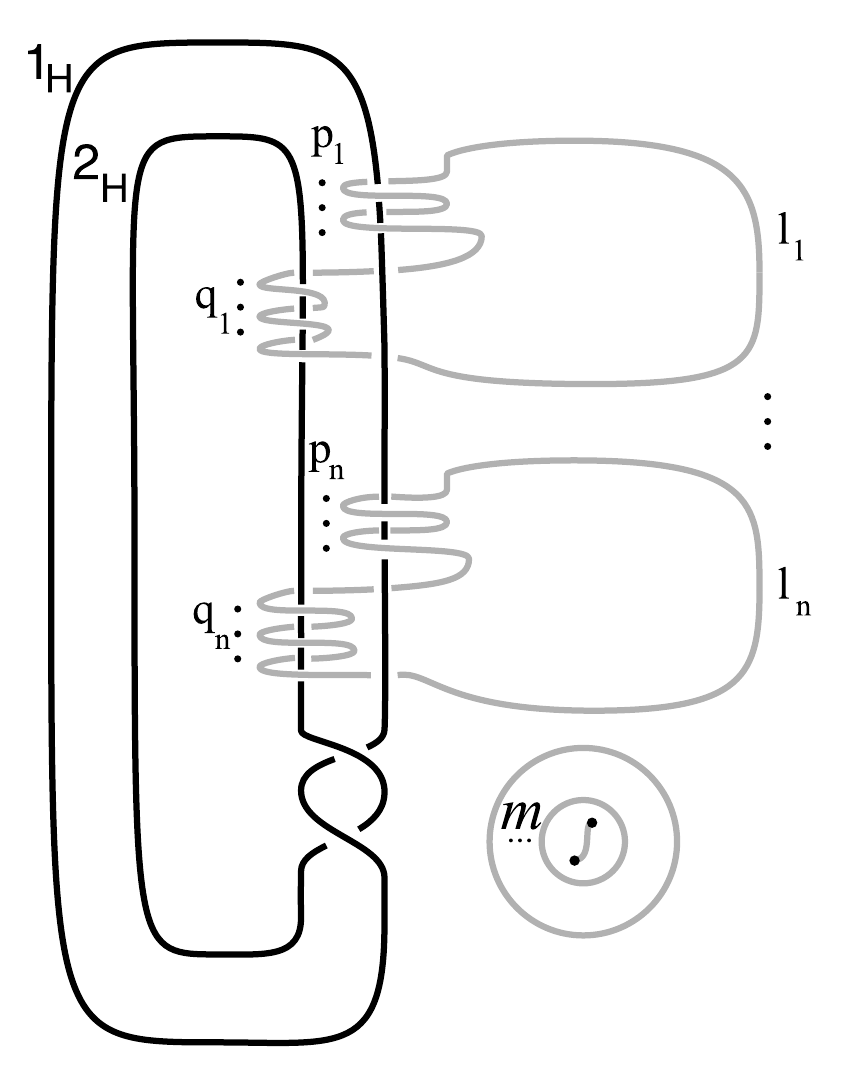}} \rangle = v^m \prod_i s_{p_i,q_i}^{l_i} \langle \raisebox{-1pt}{\includegraphics[scale=0.1]{arc.pdf}} \rangle$
\end{center}
\noindent where we substitute $s_{p,q}^l$ by the product $\prod_i s_{p_i,q_i}^{l_i}$ of the distinct $(p_i,q_i)$-torus links of $l_i$ components, with the $l_i$ being non-negative integers. See Figure~\ref{state-Hmixed} (right) for an enlarged picture of the diagram depicted in rule iv$^\prime$. Moreover, $L$ in rule (iii) stands now for an (oriented) ${\rm H}$-mixed multi-knotoid $ \mathrm{H} \cup \, L$. 
 This alternative definition is related to Definition~\ref{pkaufbtt} via a change of basis of the Kauffman bracket skein module of the thickened torus. 

 \begin{figure}[H]
    \centering
   \includegraphics[width=5.2in]{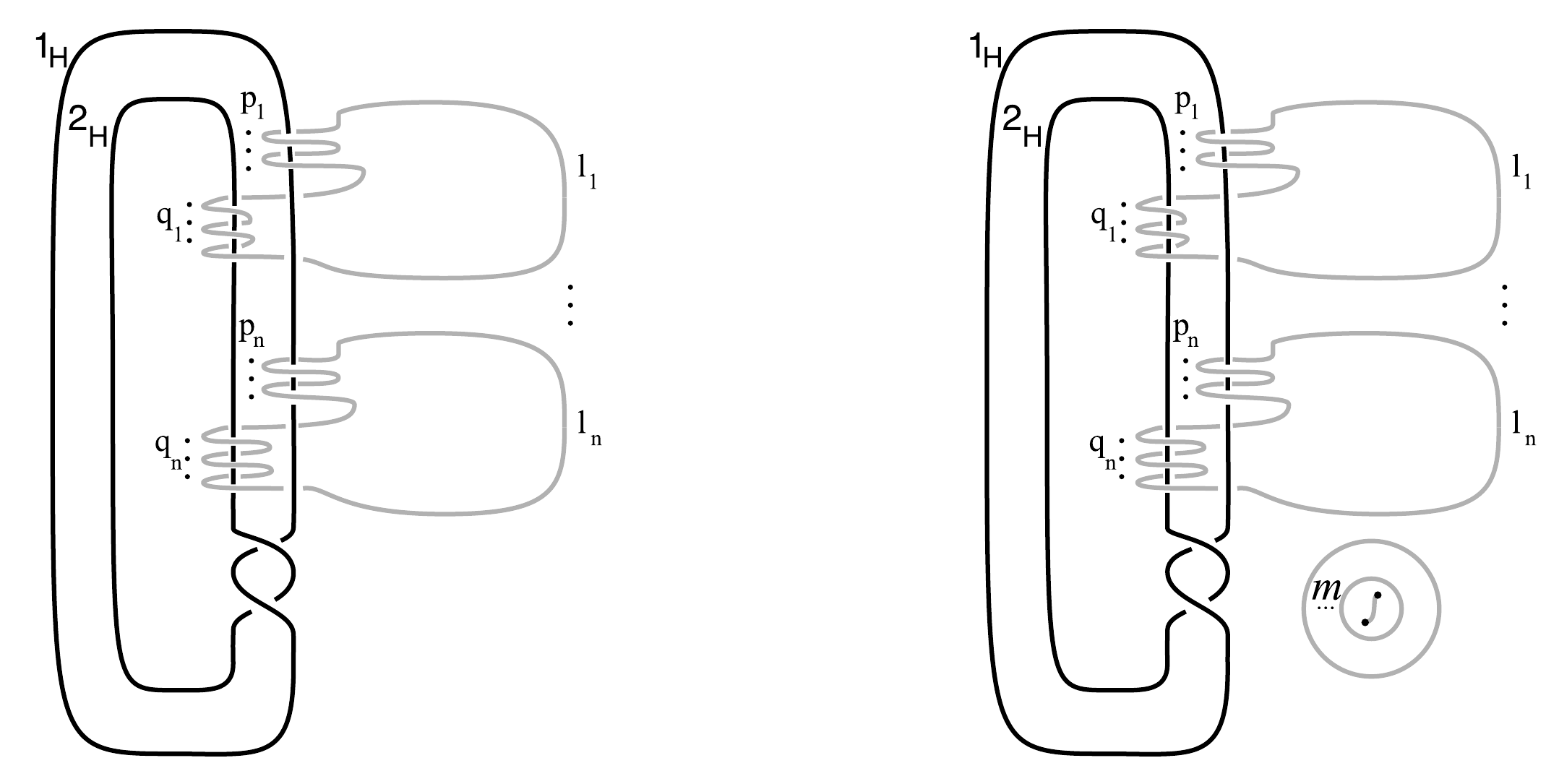}
    \caption{On the left, an ${\rm H}$-mixed link diagram corresponding to  different torus links. On the right, a generic state for an ${\rm H}$-mixed link diagram after removing the null-homotopic components.}
    \label{state-Hmixed}
\end{figure}

\end{remark}

In analogy now to the annular case (Theorem~\ref{th:O-pkaufbst}), we then have for planar ${\rm H}$-mixed multi-knotoids the following:

\begin{theorem}\label{th:th-mix-in}
The (universal) ${\rm H}$-mixed and the reduced ${\rm H}$-mixed bracket polynomials are invariant under regular isotopy of \, ${\rm H}$-mixed multi-knotoids and are equivalent to the  (universal) toroidal and the reduced toroidal bracket polynomials, respectively, for toroidal multi-knotoids.
\end{theorem}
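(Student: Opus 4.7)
The plan is to exploit the bijective correspondence of Theorem~\ref{toroidaltoHmixed} between toroidal multi-knotoid diagrams and ${\rm H}$-mixed multi-knotoid diagrams, and to transfer the already established properties of the toroidal bracket polynomials (Theorems~\ref{th:pkaufbtt}, \ref{th:univ-tor} and \ref{th:pkaufbtt-reduced}) to the ${\rm H}$-mixed setting via this dictionary.

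First I would handle the equivalence with the corresponding toroidal bracket polynomials. The defining rules (i), (ii), (iii), (v) of Definitions~\ref{mix-pkaufbtt} and \ref{def:reduced-H-bracket} are literally the same as the ones of Definitions~\ref{pkaufbtt}, \ref{toroidalskein} and \ref{def:reduced-tor-bracket}. The only difference lies in rule (iv), where the $(lp,lq)$-torus link in $T^2$ is replaced by its closed-braid ${\rm H}$-mixed representation (Figure~\ref{Hmixedtoruslink}). Since the skein relations are applied only to moving crossings (the fixed and mixed crossings of ${\rm H}$ are never smoothened), the smoothing tree in the ${\rm H}$-mixed setting is in bijection with the smoothing tree in the toroidal setting. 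Moreover, null-homotopic unknots, non-essential unknots enclosing the trivial knotoid, and $(lp,lq)$-torus links in the torus correspond, respectively, to null-homotopic unknots in the plane, unknots enclosing the moving arc in the plane, and the closed-braid ${\rm H}$-mixed diagrams of Figure~\ref{Hmixedtoruslink}. This produces a term-by-term identification of the two state sums and yields the equivalence of each (reduced/universal) ${\rm H}$-mixed bracket with the corresponding toroidal bracket.

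Next I would verify invariance of the ${\rm H}$-mixed bracket polynomials under the ${\rm H}$-mixed Reidemeister equivalence of Definition~\ref{def:H-mixed-diagram-equiv}. The classical Reidemeister moves R2 and R3 performed on the moving part are handled exactly as in the proofs of Theorems~\ref{th:pkaufbtt}, \ref{th:univ-tor} and \ref{th:pkaufbtt-reduced}, since they involve only moving crossings and take place away from the ${\rm H}$-part. The mixed Reidemeister moves (Figure~\ref{mreidm}) and the mixed R3 move (Figure~\ref{mr3}) involve crossings of the moving part with arcs of the fixed ${\rm H}$-part; since mixed and fixed crossings are not smoothened, the same local cancellation argument that proves invariance under classical R2 and R3 still applies, with one of the strands playing a passive role as an arc of ${\rm H}$.

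The main obstacle will be the endpoint moves of Figure~\ref{mixedfm}, which allow an endpoint of the moving knotoid to pass over or under arcs of the fixed ${\rm H}$-part, a manipulation that is forbidden in standard planar knotoid theory. To handle it I would invoke the dictionary built in the first step together with Theorem~\ref{toroidaltoHmixed}: under the bijection, an ${\rm H}$-mixed endpoint move corresponds to a toroidal surface isotopy (an endpoint swing across a meridian or longitude of $T^2$), which is an isotopy of toroidal multi-knotoid diagrams. Since the toroidal bracket polynomials are invariant under toroidal isotopy and the two state sums coincide term by term by the equivalence established above, invariance under the endpoint moves follows. Alternatively, one can verify directly that an endpoint move only creates or destroys mixed crossings, which never enter the skein relations, so the collection of states and their polynomial contributions is preserved. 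Combining the equivalence with the move-by-move invariance check concludes the proof.
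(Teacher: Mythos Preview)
Your proposal is correct and follows essentially the same route as the paper's proof: both invoke Theorems~\ref{th:pkaufbtt}, \ref{th:univ-tor}, \ref{th:pkaufbtt-reduced} for invariance under the classical moves R2, R3, observe that mixed crossings are never smoothened so the mixed Reidemeister moves and the mixed R3 move are undetectable, and deduce the equivalence with the toroidal brackets from Theorem~\ref{toroidaltoHmixed}. The only minor difference is one of emphasis and ordering: you establish the state-sum equivalence first and use it (together with a direct argument) to handle the endpoint moves explicitly, whereas the paper treats invariance first and handles the endpoint moves only implicitly by referring back to the annular case (Theorem~\ref{th:O-pkaufbst}), where the same ``mixed crossings are not smoothened'' observation is made.
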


\begin{proof}
We consider the restriction of the theory of planar multi-knotoids to the subset of  ${\rm H}$-mixed multi-knotoids. Then, by Theorems~\ref{th:pkaufbtt}, ~\ref{th:univ-tor}, and ~\ref{th:pkaufbtt-reduced}, the (universal/reduced) ${\rm H}$-mixed bracket polynomial is invariant under the classical Reidemeister moves R2 and R3. Regarding now the mixed Reidemeister moves MR2, MR3, and mixed R3 (recall Figures~\ref{mreidm} and~\ref{mr3}), since there is no rule for smoothing mixed crossings, the moves MR2 and the mixed R3 moves are undetectable by the (universal/reduced) ${\rm H}$-mixed bracket, so it remains invariant. Further, invariance under the moves MR3 and MPR3 follows by the same arguments as in the annular case. 

Finally, the equivalence of the (universal/reduced) ${\rm H}$-mixed bracket polynomial for ${\rm H}$-mixed multi-knotoids to the (universal/reduced) toroidal pseudo bracket polynomial for toroidal multi-knotoids follows immediately by Theorem~\ref{toroidaltoHmixed}.
\end{proof}

In analogy to the toroidal bracket polynomials we also have here closed state summation formuli.

\subsection{The ${\rm H}$-mixed Jones polynomials}

Finally, in analogy to the planar and the annular cases, the universal/reduced ${\rm H}$-mixed bracket polynomials can also be normalized so as to also respect Reidemeister move 1. 

\begin{definition}\label{def:toroidal_Hwrithe}
The ${\rm H}$-\textit{writhe} of an oriented ${\rm H}$-mixed multi-knotoid ${\rm H}\cup K$ shall be denoted $w({\rm H}\cup K)$ and is defined as the number of positive crossings minus the number of negative crossings of ${\rm H}\cup K$ (recall Figure~\ref{si}), while the mixed crossings do not contribute to the ${\rm H}$-writhe.
\end{definition}

This leads to obtaining the {\it normalized (universal/reduced) ${\rm H}$-mixed bracket (or Jones) polynomials}.

\begin{theorem}\label{th:Jones-H}
Let ${\rm H}\cup K$ be an ${\rm H}$-mixed multi-knotoid diagram. 
The {\it ${\rm H}$-mixed Jones}, the {\it universal ${\rm H}$-mixed Jones polynomials} and the {\it reduced ${\rm H}$-mixed Jones polynomials}
$$V({\rm H}\cup K)\ =\ (-A^3)^{-w({\rm H}\cup K)}\, \langle {\rm H}\cup K \rangle$$
$$V_U({\rm H}\cup K)\ =\ (-A^3)^{-w({\rm H}\cup K)}\, \langle {\rm H}\cup K \rangle_U$$
$$V_R({\rm H}\cup K)\ =\ (-A^3)^{-w({\rm H}\cup K)}\, \langle {\rm H}\cup K \rangle_R$$
\noindent where $w({\rm H}\cup K)$ is the ${\rm H}$-writhe of ${\rm H}\cup K$,  $A = t^{-1/4}$, $\langle {\rm H}\cup K \rangle$ the ${\rm H}$-mixed bracket polynomial of ${\rm H}\cup K$, $\langle {\rm H}\cup K \rangle_U$ the universal ${\rm H}$-mixed bracket polynomial of ${\rm H}\cup K$ and $\langle {\rm H}\cup K \rangle_R$ the reduced ${\rm H}$-mixed bracket polynomial of ${\rm H}\cup K$, are isotopy invariants of ${\rm H}$-mixed multi-knotoids.
\end{theorem}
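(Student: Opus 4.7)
The plan is to reduce the claim to checking invariance under a single move, the Reidemeister move R1 on the moving part. By Theorem~\ref{th:th-mix-in}, the three bracket polynomials $\langle {\rm H}\cup K\rangle$, $\langle {\rm H}\cup K\rangle_U$, and $\langle {\rm H}\cup K\rangle_R$ are already invariants of regular isotopy of ${\rm H}$-mixed multi-knotoids, i.e., invariant under planar isotopy (including endpoint swings), the moving R2 and R3, the mixed Reidemeister moves MR2, MR3, the mixed R3 move, and the endpoint moves. So the only thing left to verify, in direct analogy with the classical Kauffman normalization, is that multiplication by $(-A^3)^{-w({\rm H}\cup K)}$ precisely cancels the discrepancy produced by an R1 kink on the moving part.

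First I would check that the ${\rm H}$-writhe $w({\rm H}\cup K)$ is itself invariant under every ${\rm H}$-mixed equivalence move except a moving R1. Since, by Definition~\ref{def:toroidal_Hwrithe}, the mixed crossings and the two crossings of the fixed ${\rm H}$ do not contribute to $w({\rm H}\cup K)$, the endpoint moves (which only create or annihilate mixed crossings) and the mixed moves MR2, MR3, mixed R3 leave $w({\rm H}\cup K)$ unchanged. The moving R2 introduces (or removes) a pair of oppositely signed moving crossings, and the moving R3 permutes moving crossings without altering their signs; both preserve $w({\rm H}\cup K)$. Hence the ${\rm H}$-writhe changes only by $\pm 1$ under a positive or negative R1 performed on the moving part.

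Next I would run the standard kink computation using the skein rules (i)--(iii) of Definition~\ref{pkaufbtt}, as transported to the ${\rm H}$-mixed setting by Definitions~\ref{mix-pkaufbtt} and~\ref{def:reduced-H-bracket}. Smoothing a positive kink on a moving arc by rule~(i) and applying rule~(iii) to the resulting null-homotopic loop (with $d=-A^2-A^{-2}$) yields
\[
\bigl\langle\, \raisebox{-2pt}{kink}_+ \,\bigr\rangle \;=\; -A^{3}\,\bigl\langle\, \raisebox{-2pt}{strand} \,\bigr\rangle,
\]
and symmetrically a negative kink contributes the factor $-A^{-3}$. Rules (iv), (iv$'$) and their reduced variant are not triggered here because the loop produced by R1 is null-homotopic in a small disc disjoint from ${\rm H}$ and from the knotoid endpoints, so the essential component counts $l$, $p$, $q$ and the enclosed-knotoid count $m$ do not change. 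Therefore each moving R1 multiplies the (universal, resp. reduced) ${\rm H}$-mixed bracket by $-A^{\pm 3}$ while shifting $w({\rm H}\cup K)$ by $\pm 1$, so the normalized quantity $(-A^3)^{-w({\rm H}\cup K)}\langle\,\cdot\,\rangle$ is unchanged.

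The main subtlety, and really the only nontrivial point, lies in making sure the kink analysis applies uniformly near the endpoints and near the fixed component ${\rm H}$: R1 takes place away from the endpoints and may be assumed, after a planar isotopy, to occur in a small disc missing ${\rm H}$, so the local state computation is genuinely that of the classical bracket and does not interact with rule~(iv) or with mixed crossings. Assembling these three observations (regular isotopy invariance from Theorem~\ref{th:th-mix-in}, ${\rm H}$-writhe stability outside R1, and the $-A^{\pm 3}$ kink identity) gives full ${\rm H}$-mixed isotopy invariance of $V$, $V_U$ and $V_R$; the substitution $A = t^{-1/4}$ then produces the three polynomial invariants as stated.
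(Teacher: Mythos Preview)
Your proposal is correct and follows essentially the same approach as the paper: invoke Theorem~\ref{th:th-mix-in} for regular isotopy invariance, then use the ${\rm H}$-writhe normalization to handle R1. The paper's own proof is a single sentence citing Theorem~\ref{th:th-mix-in}, Definition~\ref{def:toroidal_Hwrithe}, and Theorem~\ref{th:pl-bracket}; you have simply spelled out the standard kink computation and the stability of $w({\rm H}\cup K)$ under the non-R1 moves that the paper leaves implicit.
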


\begin{proof}
The invariance of the (universal/reduced) ${\rm H}$-mixed Jones polynomial follows analogously from the invariance of the (universal/reduced) ${\rm H}$-mixed bracket polynomial (Theorem~\ref{th:th-mix-in}), the definition of the ${\rm H}$-writhe (Definition~\ref{def:toroidal_Hwrithe} and Theorem~\ref{th:pl-bracket} for planar multi-knotoids. 
\end{proof}

\section{Computing Examples}\label{computing_bracket}

In this final section we  compute some examples of the bracket polynomial for multi-knotoids in three different contexts: on the plane, in the thickened annulus and in the thickened torus. These examples emphasize the differences in the polynomial calculations across these settings. Specifically, we provide two examples of multi-knotoids. For each example, we consider the multi-knotoid in the above mentioned three scenarios, highlighting the distinct characteristics of the bracket polynomial in each context.

\subsection{Example 1}

To illustrate some limitations of the bracket polynomial for a multi-knotoid across different contexts, we start by considering the knotoids illustrated in Figure~\ref{kstt}.

\begin{figure}[H]
\begin{center}
\includegraphics[width=5.5in]{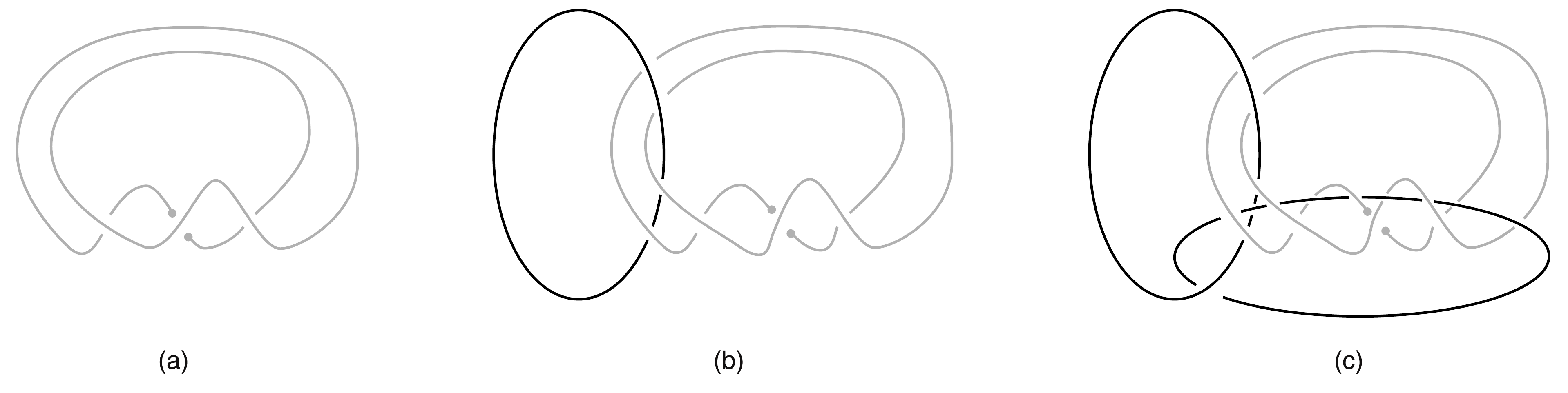}
\end{center}
\caption{(a) The knotoid $K_p$ in $\mathbb{R}^2 \times I$, (b) $K_a$ in ST and (c) $K_t$ in $T^2 \times I$.}
\label{kstt}
\end{figure}

In Figure~\ref{kstt}(a), we present a knotoid $K$ on the plane, denoted by $K_p$, and we compute the planar bracket polynomial according to Definition~\ref{pkaufb}. We have that:

\[
\langle K_p \rangle = A^2 + 1 - A^{-4}
\]

In Figure~\ref{kstt}(b), the knotoid $K$ is presented in the thickened annulus, denoted by $K_a$, and in Figure~\ref{kstt}(c), $K$ is considered in $T^2 \times I$, denoted by $K_t$.
Computing the annular and toroidal bracket polynomial of $K_a$ and of $K_t$ (according to the rules of Definition~\ref{pkaufbst} and Definition~\ref{pkaufbtt} respectively), we observe that:

\[
\langle K_a \rangle = \langle K_t \rangle = \langle K_p \rangle = A^2 + 1 - A^{-4}
\]

By comparing the bracket polynomials for knotoids $K_p, K_a$, and $K_t$, we observe that in this case, $\langle ; \rangle$ does not capture the topology of the space the knotoid lives in.

\subsection{Example 2}

We now present an example that highlights the differences in the bracket polynomial for a multi-knotoid across the different contexts. We consider the multi-knotoids illustrated in Figure~\ref{ex-bracket}.

\begin{figure}[H]
\begin{center}
\includegraphics[width=6in]{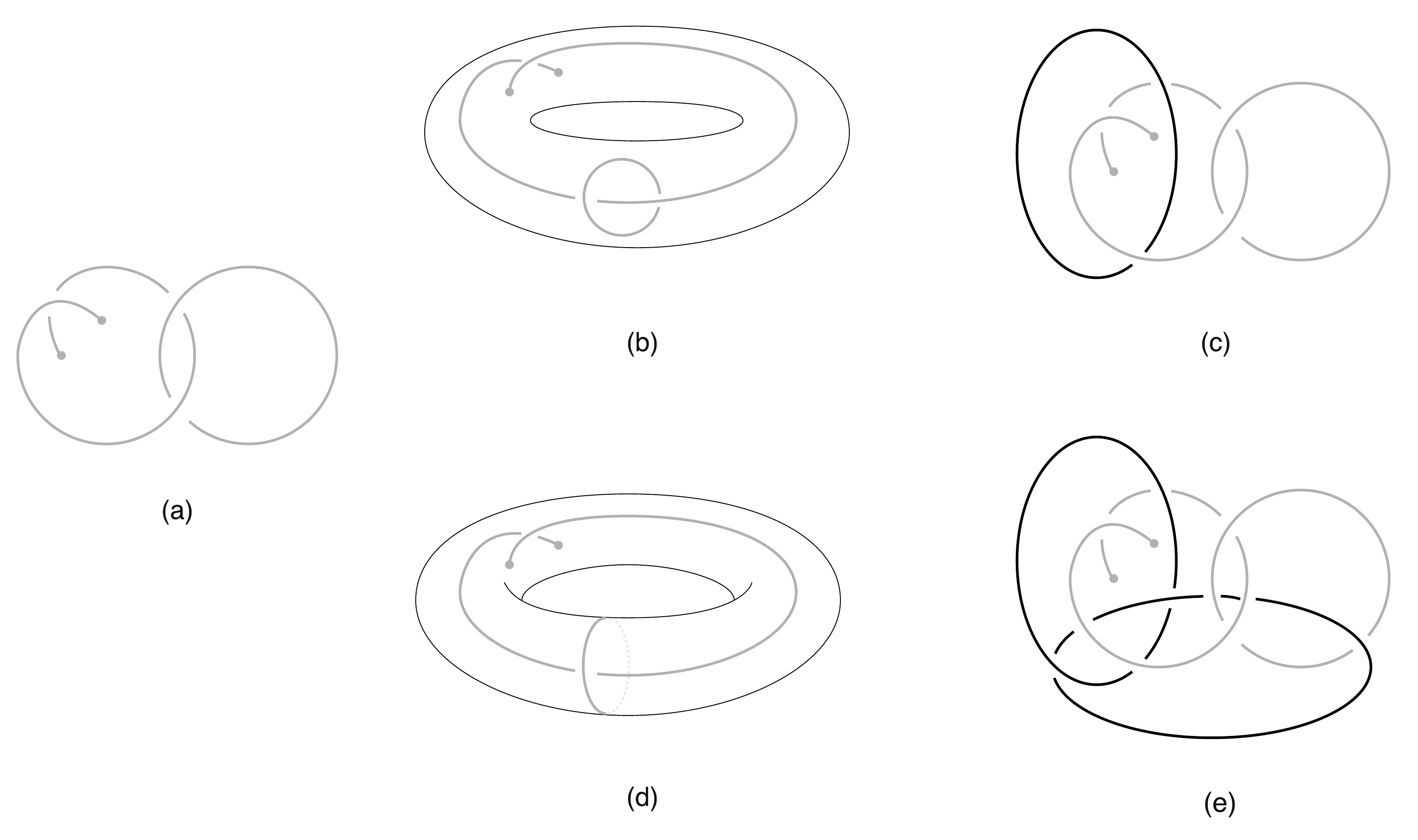}
\end{center}
\caption{(a) A planar multi-knotoid $L_p$, (b) $L_p$ on the surface of the torus, (c) $L_a$ as an annular multi-knotoid, and (d) $L_t$ as a toroidal multi-knotoid.}
\label{ex-bracket}
\end{figure}

We first compute the planar bracket polynomial for the planar multi-knotoid $L_p$ and we have that:
\[
\langle
L_p\rangle = \left(-A^{-3}\right) (1 + A^8) v + \left(- A^{-5}\right) (1 + A^8)
\]

In Figure~\ref{ex-bracket-planar} we illustrate the corresponding tree diagram of $L_p$ resulting from the smoothing of each crossing.

\begin{figure}[H]
\begin{center}
\includegraphics[width=5.7in]{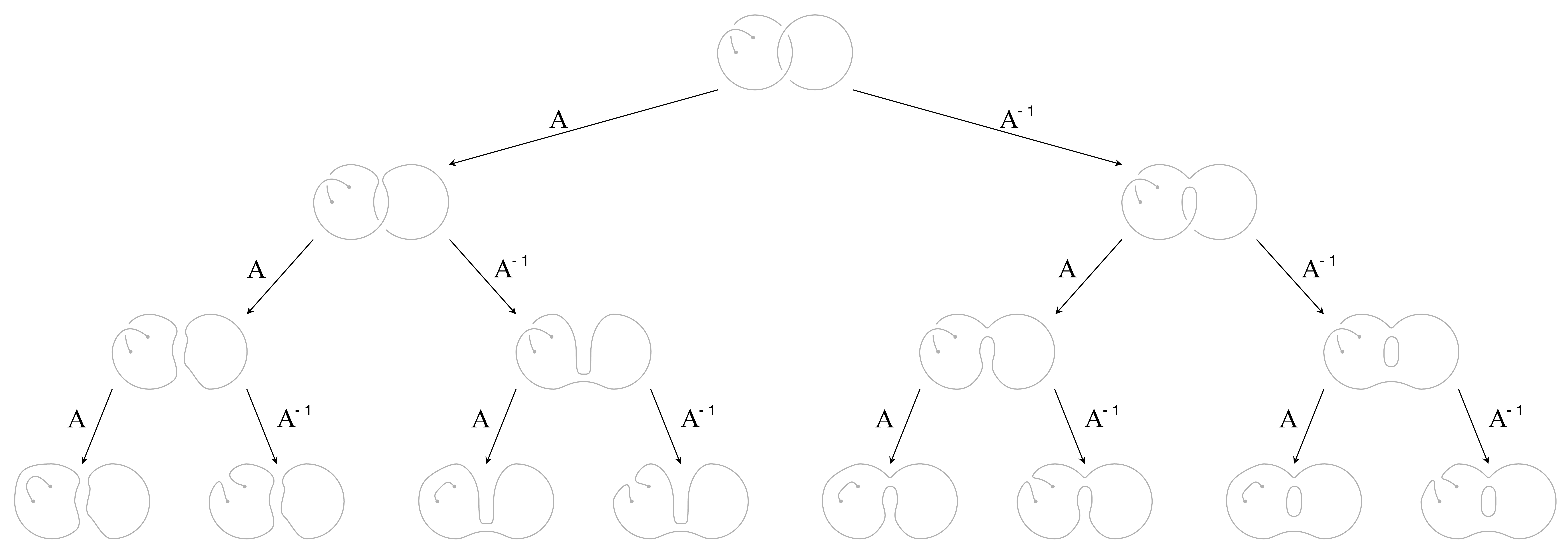}
\end{center}
\caption{The tree diagram for computing the planar bracket polynomial of $L_p$.}
\label{ex-bracket-planar}
\end{figure}

In Figure~\ref{ex-bracket-annular} and Figure~\ref{ex-bracket-torus}  we illustrate the tree diagram for the annular and the toroidal cases, respectively. 

\begin{figure}[H]
\begin{center}
\includegraphics[width=6in]{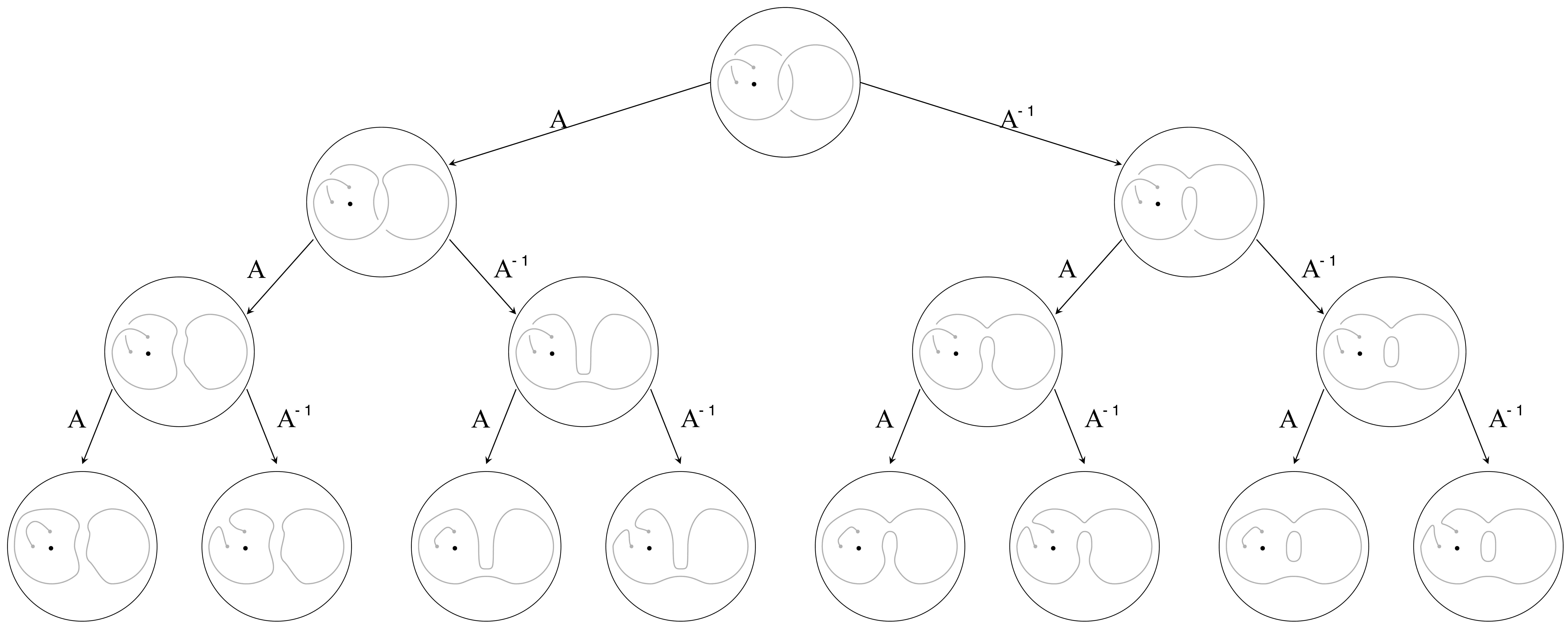}
\end{center}
\caption{The skein tree diagram for computing the annular bracket polynomial of the annular multi-knotoid $L_a$.}
\label{ex-bracket-annular}
\end{figure}

\begin{figure}[H]
\begin{center}
\includegraphics[width=6in]{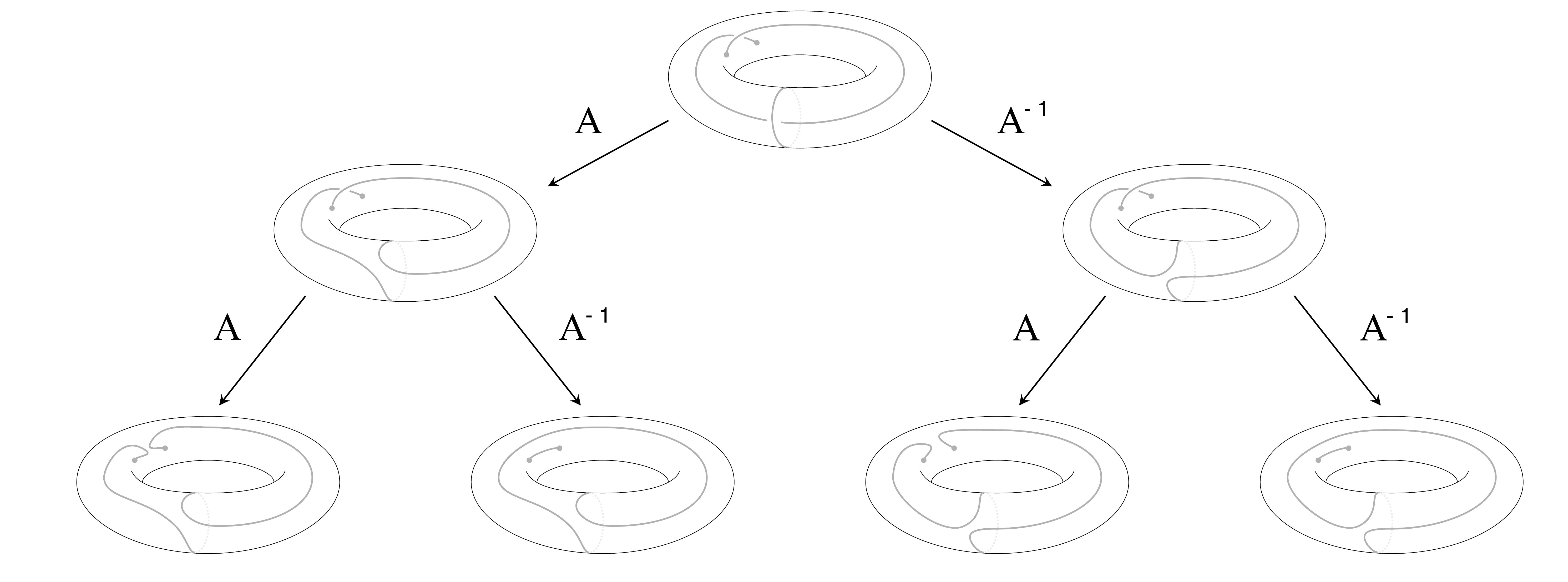}
\end{center}
\caption{The tree diagram for computing the toroidal bracket polynomial of $L_t$.}
\label{ex-bracket-torus}
\end{figure}

We compute the annular and toroidal bracket polynomial for $L_a$ and $L_t$, respectively, and we obtain the following equations:

\[
\begin{array}{lcl}
\langle L_a \rangle & = &  \left(-A^{-3}\right) (1 + A^8) t + \left(- A^{-5}\right) (1 + A^8) \\
&&\\
\langle L_t \rangle & = & ( 1 + A^2) \, +\, s_{-1,1} \, +\, A^{-2} s_{1,1}
\end{array}
\]
The presence of additional variables and terms in $L_a$ and $L_t$ demonstrates how the topology of
the underlying 3-manifold affects the computation of the bracket polynomial.

%%%%%%%%%%
%%%%%%%%%%%
%%%%%%%%%%%

\end{document}